\newcommand{\lra}{\longrightarrow}
\newcommand{\Rnn}{\bar{\mathbb R}_+^n}
\newcommand{\diag}{\mathop{\mbox{diag}}}
\newcommand{\sgn}{\mathop{\mbox{sgn}}}
\newcommand{\supp}{\mathop{\mbox{supp}}}
\newcommand{\im}{\mathop{\mbox{Im}}}
\newcommand{\tV}{{\tilde{V}}}
\newcommand{\cW}{{\mathcal W}}
\newcommand{\co}{\mathop{\mbox{co}}}
  \newtheorem{theorem}{Theorem}
  \newtheorem{lemma}{Lemma}
  \newtheorem{corollary}[theorem]{Corollary}
  \newtheorem{proposition}[theorem]{Proposition}
  \newtheorem{definition}{Definition}
  \newtheorem{remark}{Remark}
  \newcommand{\R}{{\rm \bf R}}
\title{New Approach to the Stability of Chemical Reaction Networks: Piecewise Linear in Rates Lyapunov Functions}
\author{M. Ali Al-Radhawi, \emph{Student Member, IEEE},\thanks{This work was partially supported by Leverhulme Trust under award titled ``Structural conditions for oscillation in chemical reaction networks''. \newline M. Ali Al-Radhawi is with the Department of Electrical and Electronic Engineering, Imperial College London, London SW7 2AZ, United Kingdom. Email:\texttt{ m.rizvi11@imperial.ac.uk}} and  David Angeli, \emph{Fellow, IEEE} 
\thanks{David Angeli is with the Department of Electrical and Electronic Engineering, Imperial College London, London SW7 2AZ, United Kingdom. Also, he is with Dipartimento di Ingegneria dell'Informazione, University of Florence, 50139 Florence, Italy. Email:\texttt{ d.angeli@imperial.ac.uk}}}
\begin{document}

\maketitle

\begin{abstract}
Piecewise-Linear in Rates (PWLR) Lyapunov functions are introduced for a class of Chemical Reaction Networks (CRNs). In addition to their simple structure, these functions are robust with respect to arbitrary monotone reaction rates, of which Mass-Action is a special case. The existence of such functions ensures the convergence of trajectories towards equilibria, and can be used to establish their asymptotic stability with respect to the corresponding stoichiometric compatibility class. We give the definition of these Lyapunov functions, prove their basic properties, and provide algorithms for constructing them. Examples are provided, relationship with consensus dynamics are discussed, and future directions are elaborated.
\end{abstract}

\begin{IEEEkeywords}
Robust Stability, Lyapunov Methods, Chemical Reaction Networks, Biochemical Networks.
\end{IEEEkeywords}

\section{Introduction}

The study of the dynamic behavior of Chemical (or complex) Reaction
Networks (CRNs) finds its roots in Boltzmann's $H$-theorem \cite{tolman38} and the subsequent chemical engineering literature \cite{horn72,clarke80,feinberg95}. Recently, this area has sparked a growing interest in the control and systems community \cite{bastin99,sontag01,haddad09,angeli09tut}. This is especially in the light
of the challenges posed by  the emerging field of molecular systems
biology. In this respect, one of the main goals is to understand the cell behavior and
function at the level of chemical interactions and, in particular, the characterization of qualitative
features of dynamical behavior (stability, periodic orbits, chaos, etc.) resulting from such interactions.

However, a major difficulty in this field is the very large degree of uncertainty inherent
in the models of cellular biochemical networks. 
Thus, it is imperative to develop tools that are ``robust'' in the sense of being able to provide
useful conclusions based only upon information regarding the qualitative features of the network,
and not the precise values of parameters or even the specific form of reaction kinetics. Of course, this goal is
often unachievable, since the dynamical behavior may be subject to bifurcation
phenomena which are critically dependent on parameter values.

Nevertheless, research by many \cite{horn72,feinberg95}, \cite{clarke80}, \cite{sontag01}, \cite{angeli08,angeli10} has resulted in the identification of classes of chemical reaction networks for which important dynamical properties such as stability, monotonicity, persistence, etc can be checked based on structural information only regardless of the parameters involved. In this work, we follow this line of research by investigating stability properties for a wide class of chemical reaction networks.

Earlier work regarding asymptotic stability has concentrated on the concepts of detailed and complex balancing \cite{horn72}. It was shown afterwards that Mass-Action networks, which satisfy the graphical condition of being weakly reversible with deficiency zero, are complex-balanced \cite{feinberg95}. Therefore, there exists a unique equilibrium in the interior of each class which is locally asymptotically stable for complex-balanced networks. This theorem is remarkable since asymptotic stability was established independently of the kinetic constants involved.
It was shown later that if there are no equilibria on the boundary of the
class, then global asymptotic stability of the interior equilibrium holds \cite{sontag01}. However, the question of global asymptotic stability for complex-balanced CRNs remains open in general.

 The subclass of unimolecular CRNs that have a compartmental matrix can be shown to be stable \cite{maeda78}. Another approach is based on the notion of monotone systems \cite{angeli10}. Once monotonicity is established, convergence theorems for monotone systems can be applied.
\begin{figure}[t]
\centering
\scalebox{0.77}{
\centering
\begin{tikzpicture}[node distance=1.5cm,>=stealth',bend angle=45,auto]
  \tikzstyle{S}=[circle,thick,draw=black,fill=black,minimum size=2.5mm]
  \tikzstyle{R}=[rectangle,very thin,draw=black,
  			  fill=black,minimum width=.01in, minimum height=9mm]
\begin{scope}
 \node [S] (X2) [label=above:$X_2$] {};
   \node [S] (X4) [below of=X2,xshift=1.5cm,label=above:$X_3$]   {};

    \node [R] (R3) [left of=X2,yshift=-0.75cm] {};

             \node [R] (R2) [right of=X2] {}
             edge [<-]   node[above,swap] {$2$}                (X2);
             \node [S] (X3) [right of=R2, label=above:$X_4$] {}
            edge [<-]                  (R2);
                 \node [R] (R1) [right of=X3,yshift=-0.75cm] {};

                  \node [S] (X1) [below of=R2, yshift=-1.2cm, label=above:$X_1$] {};

     \draw[->] (R1) .. controls +(30:4.2cm) and +(135:3.0cm) .. (X2);
     \draw[->] (R3) .. controls +(60:0.75cm) and +(200:0.75cm) .. (X2);
     \draw[->] (R3) .. controls +(-60:0.75cm) and +(-180:1.75cm) .. (X4);
     \draw[->] (X4) .. controls +(0:2cm) and +(220:0.55cm) .. (R1);
     \draw[->] (X3) .. controls +(0:0.75cm) and +(140:0.55cm) .. (R1);
      \draw[->] (R1) .. controls +(-20:2.4cm) and +(0:2.4cm) .. (X1);
      \draw[->] (X1) .. controls +(-180:3.4cm) and +(-180:2.0cm) .. (R3);

   \end{scope}
\end{tikzpicture}}
\caption{Bipartite graph representation of the example CRN \eqref{crn_intro}.}\label{f.crn_intro}
\end{figure}
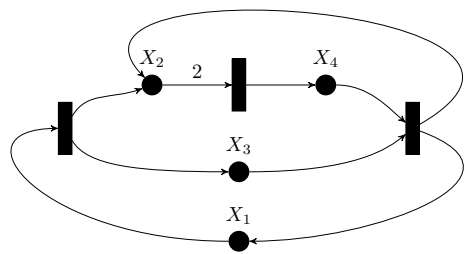

Despite the wide range of results available, they only cover a small subset of CRNs which are in practice observed to be stable. For instance, consider the following CRN which is depicted in Figure \ref{f.crn_intro} :
 \begin{align}\nonumber
     X_1 &\lra X_2+X_3,\\ \label{crn_intro}
    2X_2& \lra X_4, \\ \nonumber
    X_3 + X_4 &\lra X_1+X_2.
 \end{align}
The corresponding ODE system is:
 \begin{align}\label{e.crn_intro}
 \dot x =  \begin{bmatrix} -1 & 0& 1 \\ 1 & -2 & 1 \\ 1 & 0 & -1 \\ 0 & 1 & -1 \end{bmatrix}\begin{bmatrix} R_1(x_1)\\  R_2(x_2) \\ R_3(x_3,x_4) \end{bmatrix},
 \end{align}
 $x \in \mathbb R^4$, and $R_1,R_2,R_3$ are the reaction rate functions, where the species appearing in the argument correspond to the reactants.

Despite its simplicity, its stability can not be established via previous results in the literature \cite{feinberg95,angeli10,sontag01,maeda78} even for the Mass-Action case. However, consider the following:
\begin{align}&V(x)= \max\{ |R_1(x) - R_2(x)|,|R_2(x) - R_3(x)|, \nonumber \\ \label{e.lyap_eg} & \hspace{1.83in}  |R_3(x) - R_1(x)| \}.\vspace{0em} \end{align}
It can be verified that $V$ is decreasing along all trajectories for \emph{any choice} of monotone reaction rate functions, where exact conditions are to be detailed in \S 2. Using this, asymptotic stability of the equilibrium set can be established. In fact, in this case global asymptotic stability can be shown.

In this work, we consider the problem of stability of CRNs by invoking Lyapunov functions of the form \eqref{e.lyap_eg}, which we call \emph{Piecewise Linear in Rates (PWLR) Lyapunov functions}. Not only they have simple structure, these functions are robust with respect to arbitrary positive the kinetic constants and require mild assumptions on the reaction kinetics. Mass-Action kinetics is a special case of the admissible kinetics. \\
The concept of utilizing convex piecewise linear functions as Lyapunov functions to establish the stability is not a new one. For instance, it has been used for special nonlinear systems \cite{rosenbrock63}, linear systems \cite{willems76,molchanov89}, and consensus dynamics \cite{moreau04}. Maeda et al. \cite{maeda78} used a piecewise linear function in term of the time derivative of the states.  \\ In this work, we extend this approach to classes of nonlinear systems which have a graphical structure. By identifying nodes which are represented by nonlinear functions, we can construct Lyapunov functions which are piecewise linear in terms of the node functions. This approach is used for reaction networks where node functions are reaction rates.  The existence of such PWLR Lyapunov functions ensures the convergence of trajectories toward the equilibria. They can be used to establish the asymptotic stability of the equilibria with respect to their stoichiometric class.

The paper is organized as follows. In Section 2, we present the main definitions and assumptions. Section 3 includes the definition of PWLR Lyapunov function, and the algorithms for checking candidate functions, while section 4 presents their properties. Various constructions of PWLR functions are introduced in Section 5. In Section 6, we present some illustrative examples, and Section 7 contains the conclusion. The proofs are presented in the Appendix.

\paragraph*{Notation} Let $A \subset \mathbb R^n$ be a set, then $A^\circ, \bar A, \partial A, \co A$ denote its interior, closure, boundary, and convex hull, respectively. The tangent cone to $A$ at $x \in A$ is denoted by $T_x A$. Let $x \in \mathbb R^n$ be a vector, then its $\ell_\infty$-norm is $\|x\|_{\infty} = \max_{1\le i \le n} |x_i|$, and the $\ell_1$-norm is $\|x\|_1=\sum_{i=1}^n |x_i|$. The support of $x$ is defined as $\supp(x) = \{ i \in \{1,.., n\} | x_i \ne 0 \}$. The inequalities $x\ge0,\, x>0,\, x\gg 0$ denote elementwise nonnegativity, elementwise nonnegativity with at least one positive element, and elementwise positivity, respectively. Let $A \in \mathbb R^{n \times \nu}$, then $\ker(A)$ denotes the kernel or null-space of $A$, while $\im (A)$ denotes the image space of $A$. The all-ones vector is denoted by $\mathbf 1$, where its dimension can be inferred from the context. Let $V:D \to \mathbb R$, then the kernel of $V$ is $\ker(V)=V^{-1}(0)$.

 \section{Background on Reaction Networks}
  The field of CRN dynamics has an established literature \cite{horn72,clarke80,feinberg95,angeli09tut}. In this section, we review the relevant notations and definitions.
\subsection{Ordinary Differential Equations Formulation}
A Complex (or Chemical) Reaction Network (CRN) is defined by a set of species $\mathscr S=\{X_1,..,X_n\}$, and a set of reactions $\mathscr R=\{\R_1,...,\R_\nu\}$. Each reaction is denoted as:
\begin{equation}\label{e.reaction}
    \R_j: \quad \sum_{i=1}^n \alpha_{ij} X_i \longrightarrow \sum_{i=1}^n \beta_{ij} X_i, \ j=1,..,\nu,
\end{equation}
where $\alpha_{ij}, \beta_{ij}$ are nonnegative integers called \emph{stoichiometry coefficients}. The expression on the left-hand side is called the \emph{reactant complex}, while the one on the right-hand side is called the \emph{product complex.} The forward arrow refers to the idea that the transformation of reactants into products is only occurring in the direction of the arrow. If the transformation is occurring also in the opposite direction, the reaction is said to be \emph{reversible} and its reverse is listed as a separate reaction.   For convenience, the reverse reaction of $\R_j$ is denoted as $\R_{-j}$.
Note that we allow reactant or product complex to be empty, though not simultaneously. This is used to model external inflows and outflows of the CRN.

A nonnegative concentration $x_i$ is associated to each species $X_i$. Each chemical reaction $\R_j$ takes place continuously in time at a rate $R_j:\Rnn \to \bar{\mathbb R}_+$. We assume that the reaction rate satisfies the following:
 \begin{enumerate}
   \item[\bf A1.] it is a  $\mathscr C^1$ function, i.e. continuously differentiable;
   \item[\bf A2.] $x_i =0 \Rightarrow R_j(x)=0$, for all $i$ and $j$ such that $\alpha_{ij} > 0$; 
   \item[\bf A3.] it is nondecreasing with respect to its reactants, i.e
\begin{equation}\label{e.rate_mono}
    \frac{\partial R_j}{\partial x_i}(x)  \left \{\begin{array}{ll} \geq 0 & : \alpha_{ij} > 0 \\  =0 &: \alpha_{ij}=0 \end{array} \right . .
\end{equation}
\item[\bf A4.] The inequality in \eqref{e.rate_mono} holds strictly for all $x \in \mathbb R_+^n$.
 \end{enumerate}
\begin{remark}
Although there exist special biochemical models in which monotonicity does not apply \cite{nonmonotone}, this assumption conforms to the mostly used and popular reaction rate models including Mass-Action, Michaelis-Menten, and Hill kinetics. Furthermore, monotonic dependence of the reaction rate on the concentration of its reactants captures the basic intuition about the nature of a reaction since, as the concentration of reactants increases, the likelihood of collision between molecules increases, and hence the rate of the reaction.
\end{remark}
\begin{remark}\label{rem.sign} It can be noted that if the Jacobian matrix $\frac{\partial R^*}{\partial x}$ satisfies \eqref{e.rate_mono}, then A3 is also satisfied for all Jacobian matrices that have the same sign pattern, i.e. the same sign class.
\end{remark}

Considering a microscopic setup and the associated statistical thermodynamics considerations, the following widely-used expression for the reaction rate function can be derived:
\begin{equation}\label{e.mass_action}
    R_j(x)= k_j \prod_{i=1}^n x_i^{\alpha_{ij}},
\end{equation}
(the so called Mass-Action kinetics), with the convention $0^0=1$, where $k_j, j=1,..,m$ are positive constants known as the reaction constants.\\
The stoichiometry coefficients are arranged in an $n \times \nu$ matrix $\Gamma=[\gamma_1 \ .. \gamma_n]^T$ called the \emph{stoichiometry matrix}, which is defined element-wise as:
\[    \Gamma _{ij} = \beta_{ij}- \alpha_{ij}.\]

Therefore, the dynamics of a CRN with $n$ species and $\nu$ reactions are described by a system of ordinary differential equations (ODEs) as:
\begin{equation}\label{e.ode}
    \dot x (t) = \Gamma R(x(t)), \ x(0) \in \Rnn
\end{equation}
where $x(t)$ is the concentration vector evolving in the nonnegative orthant  $\bar{\mathbb R}_+^n$, $\Gamma \in \mathbb R^{n \times \nu}$ is the stoichiometry matrix, $R(x(t))=[R_1(x(t)), R_2(x(t)), ..., R_\nu(x(t))]^T \in \bar{\mathbb R}_+^\nu$ is the reaction rates vector.

Note that \eqref{e.ode} belongs to the class of \emph{nonnegative systems}, i.e, $\Rnn$ is forward invariant. In addition, the manifold $\mathscr C_{x_\circ}:=(\{x(0)\}+ \mbox{Im}(\Gamma)) \cap \Rnn$ is forward invariant, and it is called \emph{the stoichiometric compatibility class} associated with $x_\circ$. Therefore, all stability results in this paper are relative to the stoichiometry compatibility class. \\
A left null vector $d \in \mathbb R^n, d^T \Gamma=0$ with $d>0$ is said to be a \emph{conservation law}, or a $P$-semiflow in petri-net literature terminology. If there exists a conservation law $d\gg 0$, the network is said to be \emph{conservative}.

Furthermore, the graph is assumed to satisfy:
\begin{enumerate}
 \item[\bf AG1.] There are no autocatalytic reactions, i.e., $\alpha_{ij}  \beta_{ij} =0$ for all $i=1,..,n, j=1,..,\nu$.
\item[\bf AG2.] There exists $v \in \ker \Gamma$ such that $v\gg0$. This condition is necessary for the existence of equilibria in the interior of stoichiometric compatibility classes.
\end{enumerate}
The set of reaction rate functions, i.e. kinetics, satisfying A1-A4 for a given $\Gamma$ satisfying AG1-AG2 is denoted by $\mathscr K_\Gamma$. A network family is the triple $(\mathscr S, \mathscr R,\mathscr K_\Gamma)$ which is denoted by $\mathscr N_\Gamma$.
\subsection{Graphical Representation}\label{sec.graphical}
A CRN can be represented via a bipartite weighted directed graph given by the quadruple $(\mathbf V_S, \mathbf V_R,\mathbf E,\mathbf W)$, where $\mathbf V_S$ is a set of nodes associated with species, and $\mathbf V_R$ is associated with reactions. The set of all nodes is denoted by $\mathbf V = \mathbf V_S \cup \mathbf V_R$. \\
The edge set $\mathbf E \subset \mathbf V \times \mathbf V$ is defined as follows. Whenever
a certain reaction $\R_j $ given by \eqref{e.reaction} belongs to $\mathscr R$
we draw an edge from $X_i \in \mathbf V_S$ to $\R_j \in \mathbf  V_R$ for all $X_i$'s such
that $\alpha_{ij} > 0$. That is, $(X_i ,\R_j) \in \mathbf E$ iff $\alpha_{ij} > 0$, and we say
in this case that $\R_j$ is an \emph{output reaction} for $X_i$. Similarly,
we draw an edge from $\R_j \in \mathbf  V_R$ to every $X_i \in \mathbf V_S$ such that
$\beta_{ij} > 0$. That is, $(\R_j, X_i) \in \mathbf E$ whenever $\beta_{ij} > 0$, and we
say in this case that $\R_j$ is an \emph{input reaction} for $X_i$. Notice that there can not be edges connecting two reactions or two species. Finally, $\mathbf W: \mathbf E \to \mathbb N$ is the weight function which associates to each edge a positive integer as $\mathbf W(X_i, \R_j) = \alpha _{ij}$, and $\mathbf W(\R_j,X_i)=\beta_{ij}.$ Hence, the stoichiometry matrix $\Gamma$ becomes the \emph{incidence matrix} of the graph. Figure \ref{f.crn_intro} depicts such a representation.
 \\A reaction $\R_{j_2}$ is called an \emph{ancestor} of $\R _{j_1}$ if there exists a directed sequence of edges $(\R_{j_2},X_{k_1})$, $(X_{k_1},\R_{k_1}),..,(X_{k_n},\R_{j_1})$ connecting them. The set of ancestors of $\R_j$ is denoted $\mathscr A(\R_j)$. Denote the set of indices of reactants of $\R_j$ by $M_j=\{ i | (X_i,\R_j ) \in E,1\le i\le n \}$, and let the set of indices of inflows be $\mathcal I=\{j|M_j=\varnothing,1\le j\le \nu\}$.\\
The set of output reactions of a set of species $P$ is denoted by $\Lambda(P)$. A nonempty set $P \subset  \mathbf V_S$ is called a \emph{siphon} \cite{Angeli07} if each input reaction
associated to $P$ is also an output reaction associated to $P$. A siphon is a \emph{deadlock} if $\Lambda(P)=\mathbf V_R$. A siphon or a deadlock is said to be \emph{critical} if it does not contain a set of species corresponding to the support of a conservation law. 

\section{PWLR Lyapunov Functions}

%
\subsection{Definition}
Consider a continuous Piecewise Linear (PWL) function $\tilde V$ defined over a polyhedral conic partition of $\mathbb R^\nu$. The partition is generated by a matrix $H \in \mathbb R^{p \times \nu}$, which is assumed to have some vector $\mu \in \ker H$ with $\mu\gg0$, and does not have zero rows. Let $\Sigma_1, ..., \Sigma_{2^p}$ be the set of $p \times p$ signature matrices, i.e. all possible $\{\pm 1\}$-diagonal matrices of size $p \times p$. Define cones $\cW_1, ..., \cW_{2^p}$ as:
\begin{align}\label{e.region}
 \cW_k = \{r \in \mathbb R^\nu: \Sigma_k Hr \ge 0 \}.
\end{align}

 $\cW_k$ can be seen as the intersection of half-spaces given by the inequalities $\sigma_{ki} h_i^T r\ge 0, i=1,..,p$, where $H=[h_1^T \ ... \ h_p^T]^T$, $\Sigma_k=\diag[\sigma_{k1} ... \sigma_{kp}]$. Note that these cones are not pointed as all of them contain the nontrivial $\ker H$. \\
As some of the intersections may have empty interiors, i.e., conflicting inequalities, we reorder the cones' indices such that the first $m$ cones are the nonempty-interior cones, i.e. $\cW_k^\circ \ne \varnothing$ iff $k \in \{1,..,m\}$. Note that $m\le 2^p$, where equality is achieved iff no left null vector of $H$ exists. Otherwise, the value of $m$ will depend on the number of left null vectors and their supports.

 Thus, we can state the following proposition which ensures the well-posedness of our subsequent definitions:
\begin{proposition}\label{prop1}
Let $H$, and $\{\cW_k\}_{k=1}^m$ be defined as above, then:
\begin{enumerate}
 \item \emph{Partitioning:} We have $\mathbb R^\nu =\bigcup_{k=1}^m \cW_k$, $\bigcap_{k=1}^m \cW_k=\ker H$,  and $\cW_k \cap \cW_j = \partial W_k \cap \partial \cW_j$, for all $j,k=1,..,m$.
\item \emph{Positivity:} All the cones intersect the positive orthant nontrivially, i.e., $\cW_k^\circ \cap \mathbb R_+^\nu \ne \varnothing, k=1,...,m$. Hence, $\mathbb R_+^\nu= \bigcup_{k=1}^m (\cW_k \cap \mathbb R_+^\nu)$ is a partition to non-empty-interior cones.
\item \emph{Symmetry:} For each $1\le k_1 \le m$, there exists $1 \le k_2 \le m$ such that $\Sigma_{k_1}=-\Sigma_{k_2}$. Hence, $m$ is always even and we can reorder the cones so that $\cW_{k}=-\cW_{m-k+1}, k=1,..,m/2$.
\end{enumerate}
\end{proposition}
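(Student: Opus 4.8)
The plan is to view everything through the linear map $H:\mathbb R^\nu \to \mathbb R^p$ together with the orthant decomposition of the codomain. Writing $O_k=\{y\in\mathbb R^p:\Sigma_k y\ge 0\}$ for the closed orthant selected by the signature $\Sigma_k$, each cone is a preimage $\cW_k=H^{-1}(O_k)$. The workhorse I would establish first is the identity $\cW_k^\circ=H^{-1}(O_k^\circ)=\{r:\Sigma_k Hr\gg0\}$: the inclusion $\supseteq$ is immediate from continuity of $H$, and for $\subseteq$ I would argue that if $r\in\cW_k^\circ$ had some $h_i^Tr=0$, then, since $H$ has no zero rows, the map $r'\mapsto h_i^Tr'$ changes sign in every neighborhood of $r$, contradicting that a whole neighborhood lies in $\cW_k$. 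Consequently $\cW_k$ has nonempty interior precisely when $\im(H)$ meets the open orthant $O_k^\circ$, which is the criterion I would use for membership in $\{1,\dots,m\}$.

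For \emph{Partitioning}, the covering $\bigcup_{k=1}^{2^p}\cW_k=\mathbb R^\nu$ is immediate because the orthants cover $\mathbb R^p$; the real content is that the empty-interior cones are redundant. Given $r$ with $y=Hr$, I would produce $y'\in\im(H)$ whose sign vector has no zeros and agrees with $\sgn(y)$ on $\supp(y)$: starting from $y$, for each vanishing coordinate $i$ pick (using $h_i\neq0$) a vector $w\in\im(H)$ with $w_i\neq0$ and replace $y$ by $y+\varepsilon w$ for $\varepsilon$ small enough to preserve all currently nonzero signs, shrinking the zero set by at least one each time. The resulting $y'$ lies in some $O_{k'}^\circ$, so $k'\le m$, and since $\sgn(y)$ is consistent with the closed orthant $O_{k'}$ we get $r\in\cW_{k'}$. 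This "extend a covector to a tope" step is where I expect the only real difficulty; the rest is bookkeeping. The kernel identity then follows without re-entering the combinatorics: $\ker H\subseteq\cW_k$ for all $k$ is trivial, and conversely if $r$ lies in every cone $k\le m$, applying the covering to $-r$ yields some $k'\le m$ with $-\Sigma_{k'}Hr\ge0$, while $r\in\cW_{k'}$ gives $\Sigma_{k'}Hr\ge0$, forcing $Hr=0$. For the boundary identity (for $j\neq k$), the inclusion $\supseteq$ holds because the cones are closed; for $\subseteq$, if $r\in\cW_j^\circ\cap\cW_k$ I would pick a coordinate $i$ where $\Sigma_j$ and $\Sigma_k$ disagree and note that $\Sigma_j Hr\gg0$ forces a strict sign on $(Hr)_i$ contradicting $\Sigma_k Hr\ge0$, so $r$ can lie in neither interior.

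For \emph{Positivity}, the hypothesis $\mu\in\ker H$, $\mu\gg0$ enters decisively. Given $k\le m$, choose any $r_0\in\cW_k^\circ$, so $\Sigma_k Hr_0\gg0$. Since $H(r_0+t\mu)=Hr_0$ for every $t$, the whole ray $r_0+t\mu$ stays in $\cW_k^\circ$, and because $\mu\gg0$ the point $r_0+t\mu$ is strictly positive for $t$ large; hence $\cW_k^\circ\cap\mathbb R_+^\nu\neq\varnothing$. Restricting the covering and the boundary identity to the positive orthant then exhibits $\mathbb R_+^\nu$ as a union of full-dimensional cones meeting only along boundaries.

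Finally, for \emph{Symmetry}, I would observe that $-\Sigma_{k_1}$ is again a signature matrix, say $\Sigma_{k_2}$, and that the corresponding cone is $\cW_{k_2}=\{r:\Sigma_{k_1}Hr\le0\}=-\cW_{k_1}$. Since negation is a homeomorphism it preserves nonempty interior, so $k_1\le m$ forces $k_2\le m$. The assignment $k_1\mapsto k_2$ is then a fixed-point-free involution on $\{1,\dots,m\}$ (no signature equals its own negative), whence $m$ is even and the cones can be reindexed into antipodal pairs $\cW_k=-\cW_{m-k+1}$.
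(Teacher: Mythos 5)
Your proof is correct, and on \emph{Positivity} and \emph{Symmetry} it essentially coincides with the paper's argument (translation along $\mu\in\ker H$, $\mu\gg0$, and negation of cones, respectively; your fixed-point-free-involution remark actually makes the evenness of $m$ cleaner than the paper's one-line claim). The genuine difference is in \emph{Partitioning}. The paper argues at the level of sets: given a cone $\cW_k$ with empty interior, it rebuilds a full-dimensional cone $\cW_j\supseteq\cW_k$ by adding the half-spaces $\{r:\sigma_{ki}h_i^Tr\ge0\}$ one at a time and flipping an inequality to ``$\le$'' whenever the running intersection degenerates; the kernel and boundary identities are then asserted as evident from the definitions. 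You argue at the level of points in the image space: the workhorse identity $\cW_k^\circ=\{r:\Sigma_k Hr\gg0\}$ (which is exactly where the no-zero-rows hypothesis enters), plus the perturbation step that moves $y=Hr$ within $\im(H)$ into an open orthant while preserving nonzero signs --- in oriented-matroid language, ``every covector extends to a tope.'' Your route buys rigor where the paper waves hands: the kernel identity follows from your antipodal trick, the boundary identity from the interior characterization, and that same characterization is what justifies the paper's unexplained step ``$H(r^*+t\mu)=Hr^*$, hence $r^*+t\mu\in\cW_k^\circ$'' in Positivity. The paper's route buys a slightly stronger structural fact, namely that each degenerate cone lies inside a \emph{single} full-dimensional cone, which your pointwise argument does not directly provide (different points of a degenerate cone could a priori be absorbed by different $\cW_{k'}$); this stronger fact is not needed for the proposition, so nothing is lost.
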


After defining the partition, we are ready to define the function:
\begin{definition}\label{def.pwlr} Let $H$ with $\ker H=\ker \Gamma$, and $\{\cW_k\}_{k=1}^m$ be defined as above, and assume that $C=[c_1^T \ .. \ c_{m/2}^T]^T \in \mathbb R^{m/2 \times \nu}  $ be the \emph{coefficients matrix}. Then, $V: \mathbb R^n \to \mathbb R$ is said to be a Piecewise Linear in Rates (PWLR) function if it admits the representation $V(x)=\tilde V(R(x))$, where $\tV: \mathbb R^\nu \to \mathbb R$  is a continuous PWL function given as \vspace{-0.1in}
\begin{equation}\label{e.conLF}
 \tilde V(r) = |c_k^T r|, \ r \in \cW_k \cap -\cW_k, k=1,..,m/2.
\end{equation}
\end{definition}

Note that by definition, if $\tilde V(r)=c_k^T r$, then the function is defined over the region $\cW_k$, and if $\tilde V(r)=-c_k^T r$, then the corresponding region is $\cW_{-k}:=\cW_{m-k+1}=-\cW_{k}$.

Within the class of PWLR functions, the subclass of \emph{convex} PWLR functions admits a simpler representation:
\begin{definition}\label{def.cpwlr} Let $C=[c_1^T \ .. \ c_{m/2}^T]^T \in \mathbb R^{m/2 \times \nu}  $ be given such that there exists $v \in \ker C$ with $v\gg0$. Then, $V: \mathbb R^n \to \mathbb R$ is said to be a convex PWLR function if it admits the representation $V(x)=\tilde V(R(x))$, where $\tV: \mathbb R^\nu \to \mathbb R$  is a convex PWL given by
\begin{equation}\label{e.cvxLF}
 \tilde V(r)= \max_{1\le k \le m/2} |c_k^T r | = \| C r \|_{\infty}.
\end{equation}\end{definition}
\begin{remark} Although we name our proposed class of functions ``piecewise linear in rates functions'', the set of functions which are described by Definitions \ref{def.pwlr}, \ref{def.cpwlr} is a proper subset of the former. Therefore, note that we use this name because of convenience.
\end{remark}
\begin{remark}The matrices $H,C$ are crucial in the construction of the Lyapunov function. Methods for choosing them will be introduced in section IV.
\end{remark}

\begin{remark}It can be shown that any convex PWL function which satisfies \eqref{e.conLF} can be represented using \eqref{e.cvxLF} \cite{bereanu65}, and vice versa. Furthermore, given a function represented by \eqref{e.cvxLF}, the partition regions $\{\cW_k\}_{k=1}^m$ and the matrix $H$ can be determined.
\end{remark}

\begin{remark} The assumptions imposed on $\ker H$ are useful to simplify some Theorems and Algorithms. This will be later justified in Theorem \ref{cont.vs.cvx} where it will be shown that the existence of a Lyapunov function induced by a generic PWLR continuous function, i.e without assumptions on $H$, implies the existence of a corresponding convex PWL function. The $H$ associated with this convex function will satisfy the outlined conditions automatically.
\end{remark}

\subsection{Lyapunov Functions and Stability}
The main theme of this paper is to introduce a new class of Lyapunov functions for CRNs \eqref{e.ode} which are piecewise linear in terms of the reaction rates (PWLR). Therefore, the functions introduced in Definitions \ref{def.pwlr} and \ref{def.cpwlr} will be candidate Lyapunov functions. Such functions
  need to be non-increasing along the system's trajectories. However, since PWLR functions are non-differentiable on regions' boundaries, we will use the following expression of the time derivative of $V$ along the trajectories of \eqref{e.ode}:
\begin{align}\label{e.Vdot1}
 \dot V(x) := \max _{k \in K_x}  c_k^T \dot R(x),\end{align}
where $K_x = \{ k : c_k^T R(x) = V(x), 1\le k \le m \}$, $c_k=-c_{m+1-k}, k=1,..,\tfrac m2$,  and $\dot R(x)= \frac{\partial R(x)}{\partial x} \Gamma R(x) $. The justification for defining the time-derivative as above will be clear in the proof of Theorem \ref{th.lyap}.

 We define PWLR Lyapunov functions as follows:
\begin{definition} Given \eqref{e.ode} with initial condition $x_\circ:=x(0)\in \Rnn$. Let $V:\Rnn \to \bar{\mathbb R}_+$  be given as:  $V(x)=\tilde V(R(x))$, where $\tilde V$ is the associated PWL function. Then $V$ is said to be a \emph{PWLR Lyapunov Function} if it satisfies the following for all $R \in \mathscr K_\Gamma$,
\begin{enumerate}
 \item \emph{Positive-Definite}: $V(x)\ge0$ for all $x$, and $V(x)=0$ if and only if $R(x) \in \ker \Gamma$.
\item \emph{Nonincreasing}: $\dot V (x) \le 0$ for all $x$.
\end{enumerate}
The set of networks for which there exists a PWLR Lyapunov function is denoted by $\mathscr P$.
\end{definition}
\begin{definition}\label{def.lasalle}
A PWLR Lyapunov function for $\mathscr N_\Gamma$ is said to satisfy \emph{the LaSalle's condition} for $x_\circ$ if for all solutions $\tilde x(t)$ of \eqref{e.ode} with $ \tilde x(t) \in \ker \dot V \cap \mathscr C_{x_0}$ for all $t \ge 0$, we have $\tilde x(t) \in E_{x_\circ}$ for all $t \ge 0$, where $E_{x_\circ} \subset \mathscr C_{x_\circ}$ be the set of equilibria for \eqref{e.ode}.\end{definition}

The following theorem adapts Lyapunov's second method \cite{yoshizawa} to our context.
\begin{theorem}[Lyapunov's Second Method]\label{th.lyap}
  Given \eqref{e.ode} with initial condition $x_\circ \in \mathbb R_+^n$, and let $\mathscr C_{x_\circ}$ as the associated stoichiometric compatibility class. Assume there exists a PWLR Lyapunov function. and suppose that $x(t)$ is bounded,
  \begin{enumerate}
  \item then the equilibrium set $E_{x_\circ}$ is Lyapunov stable.
   \item If, in addition, $V$ satisfies the LaSalle's Condition, then
   $x(t) \to E_{x_\circ}$ as $t \to \infty$ (meaning that the point to set distance of $x(t)$ to $E_{x_\circ}$ tends to $0$). Furthermore, any isolated equilibrium relative to $\mathscr C_{x_\circ}$ is asymptotically stable.
   \end{enumerate}
 \end{theorem}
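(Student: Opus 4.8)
The plan is to treat $V$ as a nonsmooth Lyapunov function and to combine a nonsmooth comparison argument with the LaSalle invariance principle; the only genuine novelty over the classical case is the careful handling of the points of nondifferentiability of $V$. First I would establish that \eqref{e.Vdot1} genuinely computes the rate of decrease of $V$ along solutions. Since $\tilde V$ is continuous and piecewise linear it is globally Lipschitz and directionally differentiable everywhere, and since $R\in\mathscr C^1$ and $x(t)$ is absolutely continuous, the composite $t\mapsto V(x(t))=\tilde V(R(x(t)))$ is locally Lipschitz, hence governed by its upper right Dini derivative $D^+$. The chain rule for directional derivatives together with the representation of $\tilde V$ as (locally) a selection among the linear pieces $\pm c_k^T r$ gives
\[
D^+V(x(t))=\tilde V'\big(R(x(t));\,\dot R(x(t))\big)=\max_{k\in K_{x(t)}}c_k^T\dot R(x(t)),
\]
where the last equality is the active-set formula for the directional derivative of a pointwise maximum (Danskin's theorem), and $\dot R(x)=\frac{\partial R}{\partial x}\Gamma R(x)$. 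This is exactly $\dot V(x(t))$ in \eqref{e.Vdot1}, so that $D^+V(x(t))\le 0$; the standard monotonicity criterion for locally Lipschitz functions then yields that $t\mapsto V(x(t))$ is nonincreasing. I expect this to be the main obstacle: justifying that the maximum over the \emph{active} pieces $K_x$, rather than over all pieces, correctly captures $D^+$ at crossings between the cones $\cW_k$ is precisely what makes the definition \eqref{e.Vdot1} well posed.

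For Part 1 I would run the set-valued Lyapunov stability argument. By positive-definiteness, $\{x:V(x)=0\}=\{x:R(x)\in\ker\Gamma\}$ coincides with the full equilibrium set, so within the forward-invariant class one has $\ker V\cap\mathscr C_{x_\circ}=E_{x_\circ}$. Restricting to a compact region containing the bounded trajectory, continuity of $V$ together with $V>0$ off $E_{x_\circ}$ shows that the relative sublevel sets $\{x\in\mathscr C_{x_\circ}:V(x)\le c\}$ shrink to $E_{x_\circ}$ as $c\downarrow 0$ and form a neighborhood basis; since each such sublevel set is forward invariant by the monotonicity of $V$, the usual $\varepsilon$–$\delta$ argument gives Lyapunov stability of $E_{x_\circ}$ relative to $\mathscr C_{x_\circ}$. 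Here the secondary subtlety is that $V$ is positive definite with respect to a \emph{set} rather than a point, which is handled by the compactness afforded by boundedness of $x(t)$.

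For Part 2 I would invoke LaSalle. As $V(x(t))$ is nonincreasing and bounded below by $0$, it converges to some $V_\infty\ge 0$. Boundedness of $x(t)$ makes the $\omega$-limit set $\omega(x_\circ)$ nonempty, compact, invariant, and contained in $\mathscr C_{x_\circ}$, and continuity of $V$ forces $V\equiv V_\infty$ on $\omega(x_\circ)$. For any $y\in\omega(x_\circ)$, invariance keeps the solution through $y$ inside $\omega(x_\circ)$, where $V$ is constant; hence $\dot V=D^+V=0$ along it, so that solution lies in $\ker\dot V\cap\mathscr C_{x_\circ}$ for all $t\ge 0$. The LaSalle's Condition (Definition \ref{def.lasalle}) then forces it into $E_{x_\circ}$, so $y\in E_{x_\circ}$ and $\omega(x_\circ)\subseteq E_{x_\circ}$; boundedness gives $\mathrm{dist}(x(t),E_{x_\circ})\to 0$. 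Finally, if $\bar x$ is isolated in $E_{x_\circ}$ relative to $\mathscr C_{x_\circ}$, I would combine the stability of Part 1 with this attractivity: choosing initial data close enough that the trajectory remains in a relative neighborhood of $\bar x$ meeting $E_{x_\circ}$ only at $\bar x$, convergence to $E_{x_\circ}$ collapses to convergence to $\bar x$, which together with stability yields asymptotic stability of $\bar x$.
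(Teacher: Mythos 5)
Your proposal follows essentially the same route as the paper: a Dini-derivative estimate relating $D^+V(x(t))$ to the quantity $\dot V$ defined in \eqref{e.Vdot1}, followed by the classical Lyapunov and Krasovskii--LaSalle machinery (which the paper simply delegates to a citation, while you spell out the sublevel-set and $\omega$-limit-set arguments). One correction, however: your claimed \emph{equality} $D^+V(x(t))=\max_{k\in K_{x(t)}}c_k^T\dot R(x(t))$ via Danskin's theorem is valid only when $\tilde V$ is convex, i.e.\ of the form \eqref{e.cvxLF}. A general PWLR function \eqref{e.conLF} need not coincide locally with the pointwise maximum of its active pieces --- near a nonconvex crossing it is a different continuous selection (e.g.\ locally a minimum of two active pieces) --- so Danskin's active-set formula does not apply, and the directional derivative can be strictly smaller than the active-set maximum. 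The paper is careful on exactly this point: it proves only the inequality $D^+V(x(t))\le \max_{k\in K_{x(t)}}c_k^T\dot R(x(t))=\dot V(x(t))$, remarking that equality holds if $\tilde V$ is convex. This overstatement does not break your argument, but the repair should be made explicit: monotonicity of $t\mapsto V(x(t))$ needs only ``$\le$''; and in your LaSalle step, constancy of $V$ along a solution in the $\omega$-limit set gives $D^+V=0$, which combined with $D^+V\le\dot V$ and the standing pointwise property $\dot V\le 0$ (from the definition of a PWLR Lyapunov function) still forces $\dot V=0$, so the solution lies in $\ker\dot V\cap\mathscr C_{x_\circ}$ and Definition \ref{def.lasalle} applies exactly as you use it.
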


\begin{remark}
For a given $\Gamma$, the existence of a PWLR Lyapunov function establishes the stability of each system within the network family $\mathscr N_\Gamma$. Therefore, the Lyapunov function is robust with respect to all kinetic details of the network, and depends only on its graphical structure. It might seem that it is difficult for such function to exist, however, we will describe construction algorithms that are valid for wide classes of networks.
\end{remark}
\begin{remark} Note that the PWLR Lyapunov function considered can not be used to establish boundedness, as it may fail to be proper.  Therefore, we need to resort to other methods to guarantee boundedness a priori. For instance, if the network is conservative, i.e the exists $w \in \mathbb R_+^n$ such that $w^T\Gamma =0$, this ensures the compactness of $\mathscr C_{x_\circ}$.
\end{remark}

\begin{remark} The LaSalle's condition in Theorem \ref{th.lyap} can be verified  via a graphical algorithm to be described in \S\ref{sec.lasalle}.
\end{remark}

If the boundedness of solution was known a priori, then Theorem \ref{th.lyap} can be strengthened to the following:
\begin{corollary}[Global Stability]\label{cor1} Consider a CRN in $\mathscr P$ that satisfies the LaSalle condition with a given $x_\circ$. Assume that all the trajectories are bounded. If there exists $x^* \in E_{x_\circ}$, which is isolated relative to $\mathscr C_{x_\circ}$ then it is unique, i.e., $E_{x_\circ}=\{x^*\}$. Furthermore, it is globally asymptotically stable equilibrium relative to $\mathscr C_{x_\circ}$.
\end{corollary}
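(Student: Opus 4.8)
The plan is to reduce global asymptotic stability to a connectedness argument on the basin of attraction, exploiting the special structure of PWLR Lyapunov functions. The key preliminary observation is that, by the positive-definiteness requirement, $V(x)=0$ if and only if $R(x)\in\ker\Gamma$, i.e. if and only if $\Gamma R(x)=0$; hence within $\mathscr C_{x_\circ}$ the equilibrium set $E_{x_\circ}$ coincides \emph{exactly} with the zero level set $\ker V$. Moreover $\mathscr C_{x_\circ}=(\{x_\circ\}+\im\Gamma)\cap\Rnn$ is the intersection of an affine subspace with the orthant, hence convex and in particular connected. Since all trajectories are bounded and the LaSalle condition holds, Theorem \ref{th.lyap} already gives that every trajectory converges to $E_{x_\circ}$ and that the isolated equilibrium $x^*$ is asymptotically stable.

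First I would record that asymptotic stability makes the basin of attraction $\mathcal B=\{x_0\in\mathscr C_{x_\circ}:x(t)\to x^*\}$ a nonempty set, open relative to $\mathscr C_{x_\circ}$ and containing $x^*$. The whole corollary then follows once $\mathcal B=\mathscr C_{x_\circ}$: global attraction is immediate, uniqueness follows because the constant trajectory at any $y^*\in E_{x_\circ}$ must then converge to $x^*$, forcing $y^*=x^*$, and combining global attraction with the Lyapunov stability of the singleton $E_{x_\circ}=\{x^*\}$ from Theorem \ref{th.lyap}(1) yields global asymptotic stability. By connectedness of $\mathscr C_{x_\circ}$ it therefore suffices to prove that $\mathcal B$ is also \emph{closed} relative to $\mathscr C_{x_\circ}$.

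The closedness of $\mathcal B$ is the step I expect to be the main obstacle. The natural structural facts are available: the complement $\mathscr C_{x_\circ}\setminus\mathcal B$ is forward invariant (an orbit entering $\mathcal B$ at any time already lies in $\mathcal B$), hence so is $\partial\mathcal B$, and since $x^*$ lies in the open set $\mathcal B$ it is not on $\partial\mathcal B$; consequently, for any $\bar x_0\in\partial\mathcal B$ the limit set satisfies $\omega(\bar x_0)\subseteq E_{x_\circ}\cap\partial\mathcal B$ and avoids $x^*$. To reach a contradiction I would exploit that $V$ vanishes precisely on $E_{x_\circ}$: around the isolated point $x^*$ one can choose, using boundedness of orbits, a sphere on which $V\ge\alpha>0$ and form the positively invariant sublevel neighborhood $\Omega=\{x:\|x-x^*\|\le\rho,\ V(x)<\alpha\}$, so that any orbit once inside $\Omega$ stays there and, containing no equilibrium other than $x^*$, converges to $x^*$. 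The genuine difficulty is the nonuniformity of entry times: although initial conditions $y_n\to\bar x_0$ with $y_n\in\mathcal B$ have orbits that eventually enter $\Omega$, these entry times may diverge, so continuous dependence on a fixed finite horizon does not directly transfer convergence to the limiting orbit from $\bar x_0$. I would resolve this by noting that the same sublevel construction, applied now around any candidate limit point $q\in\omega(\bar x_0)\subseteq E_{x_\circ}\setminus\{x^*\}$, forces every \emph{isolated} equilibrium to be Lyapunov stable and hence to possess its own open basin; a boundary point of $\mathcal B$ whose orbit accumulates at such a $q$ would then have to lie in that open basin, contradicting its being a boundary point. Excluding, in full generality, non-isolated accumulation of $E_{x_\circ}$ on $\partial\mathcal B$ is the delicate part, and is exactly where the positive-definite structure ($\ker V=E_{x_\circ}$ together with monotonicity of $V$ along orbits) must be used decisively.

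Once $\mathcal B=\mathscr C_{x_\circ}$ is established, the conclusions are immediate: $E_{x_\circ}=\{x^*\}$ by the uniqueness argument above, global attraction holds by the definition of $\mathcal B$, and Lyapunov stability of the singleton $E_{x_\circ}$ from Theorem \ref{th.lyap} upgrades this to global asymptotic stability relative to $\mathscr C_{x_\circ}$.
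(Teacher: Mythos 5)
Your overall strategy (connectedness of the stoichiometric class plus openness of the basin of attraction of $x^*$) starts the same way as the paper, but your execution has a genuine gap, and you have located it yourself: the relative \emph{closedness} of the basin $\mathcal B$ is never actually proved. Your plan for handling a boundary point $\bar x_0 \in \partial \mathcal B$ works only when the limit points $q \in \omega(\bar x_0) \subseteq E_{x_\circ}\setminus\{x^*\}$ are themselves isolated equilibria, so that each has its own open basin; but the corollary assumes only that $x^*$ is isolated, and nothing prevents $E_{x_\circ}\setminus\{x^*\}$ from being a continuum (for CRNs this is in fact typical, since equilibria come in manifolds cut by the stoichiometric classes). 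The sublevel-set construction $\Omega=\{x:\|x-x^*\|\le\rho,\ V(x)<\alpha\}$ around such a non-isolated $q$ does not yield convergence to $q$, only convergence to the portion of $E_{x_\circ}$ inside $\Omega$, so the contradiction you want (that $\bar x_0$ lies in some open basin disjoint from $\mathcal B$) is not obtained. As written, the proof is therefore incomplete precisely in the case you flag as ``delicate.''

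The paper closes exactly this hole by avoiding pointwise arguments on $E_{x_\circ}\setminus\{x^*\}$ altogether: it treats that set \emph{collectively}. Since $x^*$ is isolated, $E_{x_\circ}\setminus\{x^*\}$ is relatively closed and bounded away from $x^*$, and the same Lyapunov argument that gives stability of $E_{x_\circ}$ in Theorem \ref{th.lyap} can be replicated to show that $E_{x_\circ}\setminus\{x^*\}$ is asymptotically stable \emph{as a set}, with a basin $\mathcal B_c$ that is open relative to $\mathscr C_{x_\circ}$ (this is the standard openness-of-basin argument, cited from Hahn). The LaSalle condition forces every trajectory to converge to $E_{x_\circ}$, hence $\mathscr C_{x_\circ} = \mathcal B_{x^*}\cup\mathcal B_c$ with the two basins disjoint and both relatively open. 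If $E_{x_\circ}\setminus\{x^*\}\ne\varnothing$, the connected set $\mathscr C_{x_\circ}\cap\mathbb R_+^n$ would be a disjoint union of two nonempty relatively open sets, a contradiction; so $E_{x_\circ}=\{x^*\}$ and $\mathcal B_{x^*}$ is everything. In short: replace your ``open and closed'' dichotomy for one basin by a ``two disjoint open basins'' dichotomy, with set-stability of $E_{x_\circ}\setminus\{x^*\}$ supplying the openness that your pointwise argument cannot. With that substitution your remaining steps (uniqueness via constant trajectories, upgrading to global asymptotic stability via Theorem \ref{th.lyap}(1)) go through as you wrote them.
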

\begin{remark}
 Corollary \ref{cor1} implies that the existence of two or more isolated equilibria, even if the interior's equilibrium is unique, excludes the possibility of the existence of a PWLR Lyapunov function which satisfies the LaSalle's condition. This is to be contrasted with deficiency-zero theorem \cite{feinberg95} where boundary equilibria can be accommodated. This remark will be revisited in \S IV-A.
\end{remark}
\subsection{Checking candidate PWLR functions}
The first problem we shall tackle is that of checking whether a PWLR function is a Lyapunov function for a network family $\mathscr N_\Gamma$ given by $\Gamma \in \mathbb R^{n \times \nu}$.
In this subsection, we are given a candidate $V$ which is represented by the pair $C \in \mathbb R^{m/2 \times \nu}, H \in \mathbb R^{p \times \nu}$ as in \eqref{e.conLF}.

We need further notation. Fix $k\in\{1,..,m/2\}$. We claim that for proving the continuity of $V$, it is enough to test it between \textit{neighbors}, which we define next. Consider $H$, and for any pair of linearly dependent rows $h_{i_1}^T,h_{i_2}^T$ eliminate $h_{i_2}^T$. Denote the resulting matrix by $\tilde H \in \mathbb R^{\tilde p \times \nu}$, and let $\tilde\Sigma_1,..,\tilde\Sigma_m$ the corresponding signature matrices. Note that \eqref{e.region} can be written equivalently as $\cW_k=\{r|\tilde \Sigma_k \tilde Hr \ge 0\}$. The \textit{distance} $d_r$ between two regions $\cW_k,\cW_j$ is defined to be the \textit{Hamming distance} between $\tilde\Sigma_{k},\tilde\Sigma_j$. Hence, the set of neighbors of a region $\cW_k$, and the set of neighbor pairs are defined as:
\begin{align*}
 \mathcal N_k &= \{ j| d_r(\cW_j,\cW_k)=1, j=1,..,m\}, \\
\mathcal N &=\{ (j,k) | d_r(\cW_j,\cW_k)=1, 1\le j,k \le m \}
\end{align*}
Equivalently, a neighboring region to $\cW_k$ is one which differs only by the switching of one inequality. Denote the index of the switched inequality by the map $s_{k}(.):\mathcal N_k \to \{1,..,p\}$. For simplicity, we use the notation $s_{k\ell}:=s_k(\ell)$.  

 Let $c_k^T=[c_{k1} \ . . \ c_{k\nu}]$
 , and let
$ J_k = \supp(c_k)\subset \{1,..,\nu\}$ be the set of indices of reactions appearing in $c_k$. Define the set of indices of reactants of $J_k$ as follows
\begin{align}
 I_k=\{ 1 \le i \le n | \exists j \in J_k \,\mbox{such that} \, (X_i,\R_j) \in E. \}
\end{align}
Also, for all $i\in I_k$, define $J_{ki} = \{ j \in J_k | (X_i,\R_j) \in E \}$.

We are now ready to state the following theorem:
\begin{theorem}\label{th.checkcont}
 Let $\Gamma \in \mathbb R^{n \times \nu}$, and $C\in \mathbb R^{m/2 \times \nu}$ be given, and let $\tilde V$ be given by \eqref{e.conLF}. Then,
$ V(x) =   \tilde V(R(x))$ is a PWLR Lyapunov function for the network family $\mathscr N_\Gamma$ if and only if the following conditions hold:
\begin{enumerate}
 \item[\bf C1.] \emph{Nonnegativity:} For all $1\le k \le m/2$, there exists $\xi_k \in \mathbb R^p$ with $\xi_k>0$ such that $c_k^T=\xi_k^T \Sigma_k H$.
 \item[\bf C2.] \emph{Positive-Definiteness:} $\ker C = \ker \Gamma$.
      \item[\bf C3.] \emph{Continuity:} For all $(k,j)\in \mathcal N$, $\exists \eta_{kj} \in \mathbb R$ such that  \begin{equation}\label{e.continuety} c_k-c_j=\eta_{kj} h_{s_{kj}}. \end{equation} 
      \item[\bf C4.] \emph{Nonincreasingness:} Both the following holds:
     \begin{enumerate}
       \item[a)] For all $ k=1,..,m/2$, $i \in I_k$, we require $\sgn(c_{kj_1})\sgn(c_{kj_2})\ge 0$ for every $j_1,j_2 \in J_{ki}$. Thus, denote $\nu_{ki}=\sgn(c_{kj}), j \in J_{ki}$.
       \item[b)] There exists $\lambda^{(ki)} \in \mathbb R^{p}$, with $\lambda^{(ki)}\ge 0$ such that
  \begin{equation}\label{e.decreasing}
    -\nu_{ki} \gamma_i^T =  {\lambda^{(ki)}}^T \Sigma_k H ,
  \end{equation}
  where $c_{k}:=-c_{m+1-k}$ for $j=1+m/2,..,m$. Furthermore, if \eqref{e.decreasing} is satisfied, we shall choose $\lambda^{(ki)}$ so that $\supp(\lambda^{(ki)}) \subset s_k(\mathcal N_k)$.
     \end{enumerate}
       \end{enumerate}
Moreover, $\tV$ is convex if and only if $\eta_{kj}$'s can be chosen so that $\eta_{kj} \sigma_{ks_{kj}} \ge 0$.
\end{theorem}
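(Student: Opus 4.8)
The plan is to read off each of C1--C4 as the precise algebraic transcription of one of the two defining requirements of a PWLR Lyapunov function (the positive-definiteness and nonincreasingness clauses), together with the continuity that is built into Definition \ref{def.pwlr}. The single recurring tool is the Farkas--Minkowski theorem of alternatives in homogeneous form: a linear functional $a^T r$ is nonnegative on the polyhedral cone $\cW_k=\{r:\Sigma_k Hr\ge 0\}$ if and only if $a^T=\zeta^T\Sigma_k H$ for some $\zeta\ge 0$. Since every requirement below has the form ``a certain linear functional is sign-definite on some $\cW_k$,'' this lemma both produces the certificates $\xi_k,\eta_{kj},\lambda^{(ki)}$ in the sufficiency direction and, read contrapositively, forces their existence in the necessity direction. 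Throughout I use the standing assumption $\ker H=\ker\Gamma$ from Definition \ref{def.pwlr}.

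First I would dispatch positive-definiteness. On $\cW_k$ one has $\tV(r)=c_k^T r$, so $\tV\ge 0$ is exactly $c_k^T r\ge 0$ whenever $\Sigma_k Hr\ge 0$; Farkas turns this into $c_k^T=\xi_k^T\Sigma_k H$ with $\xi_k\ge 0$, and nondegeneracy of the piece ($c_k\ne 0$) forces $\xi_k>0$, which is C1. Conversely C1 makes the representation $\tV(r)=|c_k^T r|$ genuine. For the equality clause I would establish $V(x)=0\iff R(x)\in\ker\tV$ and then identify $\ker\tV=\ker C$: the inclusion $\ker C\subseteq\ker\tV$ is immediate, while C1 applied to $r\in\ker H$ yields $\ker H\subseteq\ker C$, hence $\ker C\supseteq\ker\Gamma$; for the reverse, $\tV(r)=0$ with $r\in\cW_k$ gives $\xi_k^T(\Sigma_k Hr)=0$ with both factors of one sign, pinning $r$ onto the facets indexed by $\supp(\xi_k)$ and, via continuity of the vanishing across adjacent pieces, onto $\ker H=\ker\Gamma$. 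Thus $V$ is positive-definite exactly when $\ker C=\ker\Gamma$, which is C2. Continuity is handled next: across the shared facet of a neighbor pair $(k,j)$, which lies in the hyperplane $h_{s_{kj}}^T r=0$, continuity forces $(c_k-c_j)^T r=0$ on a full-dimensional subset of that hyperplane, i.e. $c_k-c_j=\eta_{kj}h_{s_{kj}}$, which is C3; that neighbor pairs suffice for global continuity is the reduction asserted before the theorem, since lower-dimensional faces are common limits of the codimension-one facets.

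The core is nonincreasingness. On the interior of $\cW_k$ I would compute $\dot V(x)=c_k^T\frac{\partial R}{\partial x}\Gamma R(x)$ and expand the $i$-th entry of the row vector $c_k^T\frac{\partial R}{\partial x}$ as $\sum_{j\in J_{ki}}c_{kj}\frac{\partial R_j}{\partial x_i}$, which vanishes for $i\notin I_k$. By A3--A4 each $\frac{\partial R_j}{\partial x_i}\ge 0$, so this entry is sign-determinate only if all $c_{kj}$, $j\in J_{ki}$, share a sign; this forces C4a and lets me write the entry as $\nu_{ki}\pi_i$, with $\pi_i:=\sum_{j\in J_{ki}}|c_{kj}|\frac{\partial R_j}{\partial x_i}\ge 0$, so that $\dot V=\sum_{i\in I_k}\nu_{ki}\pi_i\,\gamma_i^T R(x)$. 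For sufficiency, the certificate \eqref{e.decreasing} gives $-\nu_{ki}\gamma_i^T r={\lambda^{(ki)}}^T\Sigma_k Hr\ge 0$ for all $r\in\cW_k$, so every summand is $\le 0$ and $\dot V\le 0$ on all of $\cW_k$, boundaries included, whence the $\max$ in \eqref{e.Vdot1} is $\le 0$. For necessity I would exploit robustness over $\mathscr K_\Gamma$: choosing kinetics whose Jacobian entries at a prescribed state realize the $\pi_i$ as independent nonnegative parameters while $R(x)=r$ ranges over $\cW_k^\circ$, nonincreasingness for all kinetics forces $\nu_{ki}\gamma_i^T r\le 0$ throughout $\cW_k$, and Farkas returns \eqref{e.decreasing}; a basic (vertex) choice of the multiplier, discarding non-facet rows, secures $\supp(\lambda^{(ki)})\subseteq s_k(\mathcal N_k)$.

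Finally, for the convexity addendum I would use that a continuous PWL function is convex iff it is convex across each ridge, equivalently iff every linear piece is a global lower bound $\tV(r)\ge c_k^T r$. Testing a neighbor pair, for $r\in\cW_k$ this reads $(c_k-c_j)^T r=\eta_{kj}h_{s_{kj}}^T r\ge 0$; since on $\cW_k$ the sign of $h_{s_{kj}}^T r$ is dictated by $\sigma_{k s_{kj}}$, this holds for all such $r$ exactly when $\eta_{kj}\sigma_{k s_{kj}}\ge 0$, giving the stated criterion. The main obstacle I anticipate is the necessity half of C4: one must show that whenever C4a or \eqref{e.decreasing} fails there is an admissible kinetics in $\mathscr K_\Gamma$ and a state with $\dot V>0$. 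This realizability lemma---prescribing rate values and Jacobian entries simultaneously and independently, subject only to the monotonicity sign pattern---is where the real work lies, rather than in any Farkas dualization; a secondary care point is the nonsmooth boundary behavior, which the uniform validity of the certificates on the whole closed cone $\cW_k$ lets me absorb into the $\max$-formula \eqref{e.Vdot1}.
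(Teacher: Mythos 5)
Your overall strategy is the same as the paper's: treat each of C1--C4 as a sign condition on a polyhedral cone, dualize with Farkas, and use robustness over $\mathscr K_\Gamma$ to force the term-wise nonpositivity in C4. However, there are two genuine gaps. The first is in your necessity argument for C1: you assert that $V\ge 0$ ``is exactly'' $c_k^T r\ge 0$ whenever $\Sigma_k H r\ge 0$. This is false, because the Lyapunov definition constrains $\tilde V$ only on the image of $R$, i.e.\ on $\cW_k\cap\bar{\mathbb R}_+^\nu$ (reaction rates are nonnegative). Applying Farkas to that smaller cone yields only $c_k^T=\xi_k^T\Sigma_k H+e_k^T$ with an additional slack $e_k\ge 0$ attached to the constraints $r\ge 0$, which is strictly weaker than C1. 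The paper closes exactly this hole: using the symmetry $c_{-k}=-c_k$, $\cW_{-k}=-\cW_k$ from Proposition \ref{prop1} and the vector $v\gg 0$ in $\ker H=\ker\Gamma$ guaranteed by AG2, one gets $c_k^T v=e_k^T v\ge 0$ and $c_k^T v=-e_{-k}^T v\le 0$, forcing $e_k=0$. Without this step C1 does not follow, and the same caveat silently affects your Farkas step for \eqref{e.decreasing} in C4.

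The second gap is the sufficiency half of C3. That matching across codimension-one facets (neighbor pairs) implies continuity across \emph{all} common faces is precisely the paper's Lemma \ref{lem.cont_induction}, proved by induction on the Hamming distance between signature matrices: one must construct an intermediate nonempty-interior region splitting the distance, and the existence of such a region is itself a Farkas argument (which can fail unless linearly dependent rows of $H$ are pruned, whence the matrix $\tilde H$). Only then does the telescoping sum $c_{k_N}-c_{k_1}=\sum_\ell \eta_{k_{\ell+1}k_\ell}h_{t_\ell}$ along a chain of adjacent regions give continuity on the lower-dimensional intersection. Your one-line justification (``lower-dimensional faces are common limits of the codimension-one facets'') asserts this rather than proving it. The same remark applies, more mildly, to your ``basic (vertex) choice'' for securing $\supp(\lambda^{(ki)})\subset s_k(\mathcal N_k)$, which the paper establishes as a separate conic-basis result (Lemma \ref{lem.basis}). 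On the positive side, your argument for the convexity addendum ($\eta_{kj}\sigma_{k s_{kj}}\ge 0$ iff each piece is a global minorant) is sound and more explicit than anything in the paper's appendix, and your identification of kinetics realizability as the crux of C4's necessity matches the paper's (equally brief) treatment of that point.
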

\begin{remark}
 Note that C2 amounts to linear system solving, while C1,C3-C4 are equivalent to linear programming feasibility problems.
\end{remark}

\subsection{Checking candidate convex PWLR functions}
The conditions in the previous subsection will be simplified in the case of convex PWLR functions, as it can be noted that C1, C3 are satisfied automatically.   Consider $V$ with $\tilde V$ given by \eqref{e.cvxLF} with $\Gamma \in \mathbb R^{n \times \nu}$, and $C \in \mathbb R^{m/2 \times \nu}$ be given.


\begin{theorem} \label{th.checkcvx}
 Let $\Gamma \in \mathbb R^{n \times \nu}$, and $C\in \mathbb R^{m/2 \times \nu}$ be given. Then,
$ V(x) =   \| C R(x) \|_{\infty}$ is a PWLR Lyapunov function for the network family $\mathscr N_\Gamma$ if and only if the following two conditions hold:
\begin{enumerate}
 \item[\bf C2$'$.]\emph{Positive-Definiteness:} $\ker C = \ker \Gamma$.

 \item[\bf C4$'$.]\emph{Nonincreasingness:}  Both the following holds:
     \begin{enumerate}
       \item[a)]  For all $ k=1,..,m/2$, $i \in I_k$. We require $\sgn(c_{kj_1})\sgn(c_{kj_2})\ge 0$ for every $j_1,j_2 \in J_{ki}$. Thus, denote $\nu_{ki}=\sgn(c_{kj}), j \in J_{ki}$.
           \item [b)] There exist $\lambda^{(ki)} \in \mathbb R^m$, with $\lambda^{(ki)}\ge 0$ such that
  \begin{equation}\label{e.xdecreasing}
    -\nu_{ki} \gamma_i =  \sum_{\ell=1}^m \lambda_\ell^{(ki)} (c_k-c_\ell),
  \end{equation}
  where $c_{k}:=-c_{m+1-k}$ for $j=1+m/2,..,m$. Furthermore, if \eqref{e.decreasing} is satisfied, we shall choose $\lambda^{(ki)}$ with minimal support.
     \end{enumerate}
\end{enumerate}
\end{theorem}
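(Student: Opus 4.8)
The plan is to deduce Theorem~\ref{th.checkcvx} from the general criterion, Theorem~\ref{th.checkcont}, by showing that the convex representation \eqref{e.cvxLF} automatically satisfies C1 and C3, that C2 is literally C2$'$, and that C4 collapses to C4$'$. The bridge is that for a convex $\tilde V(r)=\max_{1\le k\le m}c_k^Tr$ (with $c_{m+1-k}=-c_k$) the partition is induced by $C$ itself: the cell on which the $k$-th piece is active is $\cW_k=\{r:(c_k-c_\ell)^Tr\ge0,\ \ell=1,\dots,m\}$, and by the representation result \cite{bereanu65} one may take the rows of $H$ to be the distinct facet normals arising from the neighbour pairs, so that $\ker H=\ker C$ and, for each neighbour $\ell\in\mathcal N_k$, the inward normal $\sigma_{ks_{k\ell}}h_{s_{k\ell}}$ is a \emph{positive} multiple $\kappa_{k\ell}(c_k-c_\ell)$ — the orientation being exactly the convexity clause $\eta_{k\ell}\sigma_{ks_{k\ell}}\ge0$ at the end of Theorem~\ref{th.checkcont}.

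Granting this dictionary, I would first dispose of C1, C2, C3. Continuity C3 is immediate, since neighbouring cells of a convex max-function are separated precisely by $\{(c_k-c_j)^Tr=0\}$, forcing $c_k-c_j$ proportional to $h_{s_{kj}}$. Nonnegativity C1 is automatic because $\tilde V(r)=\max_\ell c_\ell^Tr\ge|c_k^Tr|\ge0$ identically and $c_k^Tr=\tilde V(r)\ge0$ on $\cW_k$; a Farkas argument then writes the nonzero $c_k$ as a nonnegative combination of the inward facet normals $\sigma_{ks_{k\ell}}h_{s_{k\ell}}$, which is exactly the required $c_k^T=\xi_k^T\Sigma_kH$ with $\xi_k\ge0$, $\xi_k\ne0$ (recall $\xi_k>0$ here means nonnegative and nonzero). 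Finally C2 is verbatim C2$'$; moreover in the necessity direction C2$'$ is obtained directly from the positive-definiteness clause of the Lyapunov definition ($V(x)=0\Leftrightarrow R(x)\in\ker C$ must coincide with $R(x)\in\ker\Gamma$), and this in turn secures $\ker H=\ker C=\ker\Gamma$ so that Theorem~\ref{th.checkcont} may legitimately be applied.

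The core is the equivalence C4$\Leftrightarrow$C4$'$; clause (a) is identical in both statements, so only (b) is at issue. Substituting $\sigma_{ks_{k\ell}}h_{s_{k\ell}}=\kappa_{k\ell}(c_k-c_\ell)$ into \eqref{e.decreasing} and absorbing the positive scalars $\kappa_{k\ell}$ into the multipliers turns $-\nu_{ki}\gamma_i^T={\lambda^{(ki)}}^T\Sigma_kH$ (with $\lambda^{(ki)}\ge0$ supported on $s_k(\mathcal N_k)$) into $-\nu_{ki}\gamma_i=\sum_\ell\tilde\lambda^{(ki)}_\ell(c_k-c_\ell)$ (with $\tilde\lambda^{(ki)}\ge0$ supported on $\mathcal N_k$), i.e.\ \eqref{e.xdecreasing}. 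Conversely, any nonnegative solution of \eqref{e.xdecreasing} over all $\ell=1,\dots,m$ reduces to one supported on the neighbours: since every inequality $(c_k-c_\ell)^Tr\ge0$ is valid on $\cW_k$, each difference $c_k-c_\ell$ lies in the finitely generated cone $N_k:=\mathrm{cone}\{c_k-c_{\ell'}:\ell'\in\mathcal N_k\}$ (Farkas / Minkowski--Weyl), so the non-neighbour terms are redundant and the minimal-support multiplier lives on $\mathcal N_k$. Both C4 and C4$'$(b) thus reduce to the single geometric statement $-\nu_{ki}\gamma_i\in N_k$, proving their equivalence.

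The step I expect to be the main obstacle is the bookkeeping in this last equivalence. One must verify that the facets of the cone $\cW_k$ are \emph{exactly} the neighbour differences $c_k-c_\ell$ with the correct orientation relative to $\Sigma_k$, that redundant or linearly dependent rows of $H$ do not corrupt the support accounting encoded by the switching map $s_k(\mathcal N_k)$, and that ``minimal support $\Rightarrow$ supported on $\mathcal N_k$'' remains valid when several differences $c_k-c_\ell$ are parallel. Phrasing everything through the single cone $N_k$ and a minimal generating set of its extreme rays is the cleanest way to make this rigorous; the realizability of arbitrary nonnegative rate Jacobians needed for the necessity half is inherited from the proof of Theorem~\ref{th.checkcont} and need not be redone.
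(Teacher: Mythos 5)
Your proposal is correct and takes essentially the same route as the paper: the paper's own (very terse) proof treats Theorem~\ref{th.checkcvx} exactly as the specialization of Theorem~\ref{th.checkcont} in which C1 and C3 hold automatically for the max-representation, C2 is verbatim C2$'$, and C4 collapses to C4$'$ by the same Farkas argument applied to the cells $\cW_k=\{r:(c_k-c_\ell)^Tr\ge 0,\ \ell=1,\dots,m\}$. Your write-up merely makes explicit the dictionary between the rows of $H$ and the differences $c_k-c_\ell$ that the paper leaves implicit in the phrase ``via a similar argument to the previous proof.''
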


\subsection{Existence of convex PWLR functions}
It can be noted that convex PWLR Lyapunov functions are easier to check and have stronger properties, therefore it is natural to ask whether the use of nonconvex counterparts is less conservative. In the case of linear systems, it is known that the existence of a nonconvex  Lyapunov function implies existence of a convex counterpart \cite{blanchini95}. Despite the nonlinear nature of our problem, next theorem shows that a similar result holds in our context:
\begin{theorem}\label{cont.vs.cvx} Let $\Gamma$ and \eqref{e.ode} be given, with the corresponding kinetics $\mathscr K_\Gamma$. If there exists a continuous PWLR Lyapunov function, then there exists a convex counterpart of the form \eqref{e.cvxLF}.
\end{theorem}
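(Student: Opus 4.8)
The plan is to work in the rate space $\mathbb R^\nu$ and convexify the sublevel sets of $\tV$. Suppose $V=\tV\circ R$ is the given continuous PWLR Lyapunov function, so that $\tV$ is continuous, positively homogeneous of degree one, nonnegative, and even (by the symmetry in Proposition~\ref{prop1}, $\tV(-r)=\tV(r)$), with $\ker\tV=\ker\Gamma$. Consider its unit sublevel set $S=\{r\in\mathbb R^\nu:\tV(r)\le 1\}$. Since $\tV$ is positive definite modulo $\ker\Gamma$, $S$ is a symmetric polyhedral set that is bounded in the directions transverse to $\ker\Gamma$. Let $\hat S=\co S$ and let $\hat{\tV}$ be the Minkowski gauge of $\hat S$. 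Then $\hat{\tV}$ is convex, even, positively homogeneous, and piecewise linear (read modulo $\ker\Gamma$, $S$ is a finite union of bounded polytopes, so its convex hull is a polytope), and it vanishes exactly on $\ker\Gamma$ because convexification does not add recession directions beyond $\ker\Gamma$. By the standard representation of symmetric polyhedral norms \cite{bereanu65}, $\hat{\tV}$ can be written as $\|C'r\|_\infty$ for some $C'$ with $\ker C'=\ker\Gamma$, which (using AG2) contains a vector $v\gg0$; hence $\hat V=\hat{\tV}\circ R$ has exactly the form \eqref{e.cvxLF} required by Definition~\ref{def.cpwlr}. It remains only to show $\hat V$ is nonincreasing along all trajectories.

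The key reduction is that, because $V$ must decrease for \emph{every} kinetics in $\mathscr K_\Gamma$, the conditions C1--C4 of Theorem~\ref{th.checkcont} certify more than decrease along a single state-dependent rate dynamics. By A3--A4 the Jacobian $\partial R/\partial x$ realizes, pointwise, every nonnegative matrix with the prescribed sign pattern, and conditions \eqref{e.decreasing} are stated per cone $\cW_k$; together these mean precisely that $S$ is invariant for the linear differential inclusion $\dot r\in\{\rho\Gamma\,r:\rho\ge 0\ \text{with the sign pattern of }\partial R/\partial x\}$ on all of $\mathbb R^\nu$ (the even symmetry of $\tV$ is consistent with extending the invariance from $R(\Rnn)$ to the whole space). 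Equivalently, for every admissible exogenous realization $\rho(\cdot)$ the corresponding transition matrix $\Phi(t)$ satisfies $\Phi(t)S\subseteq S$ for all $t\ge0$.

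I would then show $\hat S$ inherits this invariance from the linearity of the inclusion. Fix an admissible realization with transition matrix $\Phi(t)$ and take $\hat r=\sum_i\theta_i r_i\in\hat S$ with $r_i\in S$, $\theta_i\ge0$, $\sum_i\theta_i=1$. Because all points are driven by the \emph{same} exogenous signal $\rho(\cdot)$, we have $\Phi(t)\hat r=\sum_i\theta_i\,\Phi(t)r_i$, and each $\Phi(t)r_i\in S$ by invariance of $S$; hence $\Phi(t)\hat r\in\co S=\hat S$. Thus every sublevel set of $\hat{\tV}$ is invariant for the inclusion. Finally, a genuine CRN trajectory is one particular admissible realization, namely $\rho(t)=\partial R/\partial x(x(t))$, so its rate trajectory keeps the sublevel sets of $\hat{\tV}$ invariant; that is, $\hat V=\hat{\tV}\circ R$ is nonincreasing along \eqref{e.ode}. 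Combined with $\ker\hat{\tV}=\ker\Gamma$, this shows $\hat V$ is a convex PWLR Lyapunov function of the form \eqref{e.cvxLF}.

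The main obstacle is the reduction in the second paragraph: rigorously establishing that verifying decrease ``for all kinetics'' is equivalent to invariance of $S$ under the exogenous, state-decoupled differential inclusion, rather than merely under the state-coupled vector field of one fixed kinetics. This decoupling is exactly what makes the flow linear in $r$ and lets convexification commute with the dynamics; the genuine nonlinearity of $\dot x=\Gamma R(x)$ would otherwise block the convex-hull step, since different extreme points $r_i$ would experience different state-dependent Jacobians. One must also handle carefully that $S$ is bounded only modulo $\ker\Gamma$ (so gauges are taken in the quotient $\mathbb R^\nu/\ker\Gamma$) and that physical rate trajectories live in $R(\Rnn)\subseteq\Rnn$, using the symmetry of $\tV$ to cover all of $\mathbb R^\nu$. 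The overall scheme mirrors the linear-systems result \cite{blanchini95}, the new ingredient being the CRN-specific reduction to a linear inclusion in rate coordinates.
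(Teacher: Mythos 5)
Your proposal is correct in its essential strategy and it is, at its core, the same construction the paper uses: take the unit level set $G=\{r:\tV(r)\le 1\}$, convexify it, and take the Minkowski gauge of $\co(G)$, with the decrease property surviving convexification because robustness over all of $\mathscr K_\Gamma$ decouples the Jacobian from the rate vector and makes the rate dynamics effectively linear. Where you differ is in how that last step is executed. The paper argues \emph{infinitesimally}: Lemma \ref{lem.tan} translates condition C4 of Theorem \ref{th.checkcont} into the statement $\frac{\partial R}{\partial x}(x)\Gamma R(x_1)\in T_{R(x_1)}G$ for \emph{any} admissible Jacobian paired with \emph{any} point of $G$ (this is exactly your ``exogenous inclusion'' reduction), and then Lemma \ref{lem.cone_addition} shows Clarke tangent cones add conically under convex combination, so the identity \eqref{e.hull} puts the vector field in $T_{R(x)}\co(G)$. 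You instead argue by \emph{integration}: invariance of $G$ under each transition matrix $\Phi(t)$ of the linear time-varying system $\dot r=\rho(t)\Gamma r$, followed by $\Phi(t)\co(G)=\co(\Phi(t)G)\subseteq\co(G)$. Your route is arguably more transparent, but it silently requires a viability (Nagumo-type, or Dini-derivative integration as in the proof of Theorem \ref{th.lyap}) step to pass from the pointwise tangency condition to $\Phi(t)G\subseteq G$ for the nonconvex polyhedral set $G$ --- the paper's infinitesimal formulation avoids ever needing flow invariance of the nonconvex set. Two of the obstacles you flag are in fact already resolved by the paper's machinery: the ``key reduction'' you defer is precisely the content of C4 (equation \eqref{e.decreasing}) together with Lemma \ref{lem.tan}, since \eqref{e.decreasing} is an algebraic identity that forces $c_k^T J\Gamma r\le 0$ for every $r\in\cW_k$ and every matrix $J$ with the admissible sign pattern (see the expansion \eqref{e.Vdot_exp}); and because this holds on each entire cone $\cW_k$, which together cover $\mathbb R^\nu$, no symmetry argument is needed to extend invariance from $R(\Rnn)$ to the whole space. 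The remaining subtlety you raise --- that $G$ is bounded only modulo $\ker\Gamma$, so the gauge must be read in the quotient and $\co(G)$ must not acquire extra recession directions --- is genuine, but the paper's own proof glosses over it in exactly the same way, so it is not a defect specific to your argument.
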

\begin{remark}The restrictions imposed on the kernel of $H$ in Definition \ref{def.cpwlr} are not needed to prove Theorem \ref{cont.vs.cvx}. \end{remark}

\subsection{Checking the LaSalle's Condition} \label{sec.lasalle}
In this section, we provide graphical algorithms for checking the LaSalle's condition stated in Theorem \ref{th.lyap}. Assume that a PWLR Lyapunov function exists. We use the same notation used in the previous two sections. Consider \eqref{e.decreasing} with $\lambda^{(ki)}$ chosen so that $\supp(\lambda^{(ki)}) \subset s_k(\mathcal N_k)$.  Let $L_{ki}=s_k^{-1}(\supp(\lambda^{(ki)}))$, which is nonempty since the LHS in \eqref{e.decreasing} is nonzero. Let $L_k=\bigcup_{i \in I_k} L_{ki} $
, $I_k^{(0)}:=I_k$, and $L_k^{(0)}:=L_k$. Define the following nested sets iteratively:
\begin{equation*}
 I_k^{(i)}= \bigcup_{\ell \in L_k^{(i-1)} } I_\ell, \  \mbox{and} \  L_k^{(i)}= \bigcup_{\ell \in L_k^{(i-1)} } L_\ell.
\end{equation*}
The iteration terminates when $L_k^{(i^*)}=L_k^{(i^*+1)}$. Denote $\bar I_k:= I_k^{(i^*)}$.
Using this notation, we state the following condition which we call the LaSalle's interior condition:
\begin{enumerate}
 \item[\bf C5i.] For all $k\in \{1,..,m\}$, the following shall hold: $\bar I_k=\{1,..,n\}$.
\end{enumerate}
In a nutshell the iterative process can be explained as follows: for every $k$, our aim is to show that $\dot x=0$ follows from the equality $c_k^T \dot R(x)=0$. Starting from the later equality, we get that the time derivative of species in $I_k$ vanish. Using \eqref{e.continuety},\eqref{e.decreasing}, this implies that $c_\ell ^T \dot R(x) =0$ for all $\ell \in L_k$. Using this procedure iteratively, we can expand the set of reactants whose derivative need to vanish. If the final set $\bar I_k$ is the whole set of species then this ensures that $\dot x=\Gamma R(x)=0$. \\
If the function $\tV$ is also convex, then the LaSalle's interior condition can be relaxed to:
\begin{enumerate}
 \item[\bf C5$'$i.] For all $k$, $c_k \in \im(\Gamma_{\bar I_k}^T)$, where $\Gamma_{\bar I_k}=[\gamma_{i_1}^T \, .. \, \gamma_{i_{o_k}}^T]^T, \bar I_k=\{i_1,..,i_{o_k}\}$ for some $o_k$.
\end{enumerate}

\begin{remark} As will be shown in the proof of Proposition \ref{th.lasalle}, conditions C5i and C5$'$i guarantee the LaSalle condition only provided $\tilde x(0) \in \mathbb R_+^n$, which explains the name. The LaSalle's interior condition alone can only establish the asymptotic stability of isolated equilibria in the relative interior of $\mathscr C_{x_0}$. In this case, Corollary \ref{cor1} will not hold, since solutions could in principle approach the boundary. However, if the persistence of the network can be verified a priori, for example by the absence of critical siphons \cite{Angeli07}, then the LaSalle's interior condition is enough to establish the result of Corollary \ref{cor1}.
\end{remark}

\begin{remark} C5$'$i does not follow from C2$'$ and C4$'$. For example consider the following CRN
\begin{align}\label{e.counter2}
     X_1 + X_2   \mathop{\lra}^{R_1} 0 \mathop{\lra}^{R_2} X_1, 0  \mathop{\lra}^{R_3} X_2,
\end{align}
 then $V(x)=|R_1(x)-R_2(x)|+|R_3(x)-R_2(x)|+|R_1(x)-R_2(x)|$ satisfies C2$'$ and C4$'$, but not C5$'$. It can be shown that there does not exist any $C$ satisfying the three conditions simultaneously, nor any pair $(H,C)$ satisfying C1-5.
\end{remark}

In order to strengthen the LaSalle's interior condition so that it applies to the boundary of stoichiometric compatibility class, we use the notion of \emph{critical siphons} defined in \S\ref{sec.graphical}. It has been shown in \cite{Angeli07
} that a face $\Psi$ of a stoichiometric compatibility class is invariant if and only if there exists a siphon $P$ such that $\Psi = \{ x \in \mathscr C_{x_\circ} | X_i \in P \Rightarrow x_i = 0   \}$. Since invariant faces arising from noncritical siphons correspond to  independent stoichiometric compatibility classes, we consider only critical siphons.  Let $\mathscr N_\Gamma$ be a given network, and $P_\ell$ be a critical siphon. We define the corresponding \emph{critical subnetwork} $\mathscr N_{\Gamma_\ell}$ as network with $\mathbf V_{R_\ell} = \mathbf V_R \backslash \Lambda(P_\ell) $, and $x_i(0)=0$ for $X_i \in P_\ell$. Furthermore, critical subnetworks of $\mathscr N_{\Gamma_{\ell}}$ are considered to be critical subnetworks of $\mathscr N_{\Gamma}$.  \\
We are now ready to state the LaSalle's condition:
\begin{proposition}\label{th.lasalle} Let $\mathscr N_\Gamma$ be a network, with a given PWLR Lyapunov function $V$. The network satisfies the LaSalle condition stated in Definition \ref{def.lasalle} for all non-negative $x_\circ$ if the following condition holds:
\begin{enumerate}
 \item[\bf C5.] The condition C5i (or C$'$5i if $\tV$ is convex) is satisfied for $\mathscr N_\Gamma$ and all its critical subnetworks.
\end{enumerate}
\end{proposition}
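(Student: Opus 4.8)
The plan is to reduce the claim to the \emph{interior} behaviour already encoded in C5i/C5$'$i, and to use the siphon/invariant-face correspondence to descend onto the boundary of $\mathscr C_{x_\circ}$. Fix a solution $\tilde x(t)$ of \eqref{e.ode} lying in $\ker \dot V \cap \mathscr C_{x_\circ}$ for all $t\ge 0$; I must show $\Gamma R(\tilde x(t))=0$. Let $Z=\{i : \tilde x_i(t)=0\ \forall t\ge 0\}$ be its zero set and $F$ the corresponding face. First I would show $Z$ is a siphon: for $i\in Z$ one has $\tilde x_i\equiv 0$, hence $\dot{\tilde x}_i\equiv 0$; by A2 every output reaction of $X_i$ has zero rate, so $\dot{\tilde x}_i=\sum_{j:\beta_{ij}>0}\beta_{ij}R_j=0$ forces every input reaction of $X_i$ to vanish, and since a rate vanishes only when some reactant is zero, each input reaction of $Z$ is also an output reaction of $Z$. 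Thus all reactions in $\Lambda(Z)$ are identically zero along $\tilde x$, so $\Gamma R(\tilde x)=\Gamma_{\ell} R_{\ell}(\tilde x)$, the dynamics of the network obtained by deleting $\Lambda(Z)$ and the species in $Z$. By \cite{Angeli07} this is precisely the reduction attached to the invariant face $F$; if $Z$ is noncritical the face is an independent, lower-dimensional compatibility class of a reduced network and the conclusion follows by induction on the number of species, whereas if $Z$ is critical we land in the critical subnetwork $\mathscr N_{\Gamma_\ell}$, for which C5 supplies C5i (resp.\ C5$'$i).

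Having reduced to a (sub)network in whose relative interior the trajectory lives, I would run the iterative argument sketched after C5i. Since $\dot V\equiv 0$, for an index $k\in K_{\tilde x(t)}$ attaining the maximum we have $c_k^T\dot R=0$. Writing $c_k^T\dot R=\sum_{i\in I_k}\big(\sum_{j\in J_{ki}}c_{kj}\tfrac{\partial R_j}{\partial x_i}\big)\dot{\tilde x}_i$ and using C4a (all $c_{kj}$, $j\in J_{ki}$, share the sign $\nu_{ki}$) together with \eqref{e.decreasing}, each summand is nonpositive, so $c_k^T\dot R=0$ forces each to vanish; strict monotonicity A4, valid on the relative interior of $F$ where the species outside $Z$ are positive, makes the bracket nonzero, whence $\dot{\tilde x}_i=0$ for every $i\in I_k$. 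Feeding $\gamma_i^T R=0$ into \eqref{e.decreasing} gives ${\lambda^{(ki)}}^T\Sigma_k H R=0$, and since $\lambda^{(ki)}\ge 0$ and $\Sigma_k H R\ge 0$ on $\cW_k$ this yields $h_{s_{k\ell}}^T R=0$ for every $\ell\in L_{ki}$; differentiating these boundary equalities along $\tilde x$ and invoking continuity C3, $c_k-c_\ell=\eta_{k\ell}h_{s_{k\ell}}$, propagates $c_\ell^T\dot R=0$ to all $\ell\in L_k$. Iterating this passage reproduces exactly the nested sets $I_k^{(i)},L_k^{(i)}$, so $\dot{\tilde x}_i=0$ for all $i\in\bar I_k$.

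It remains to close the argument. Under C5i, $\bar I_k=\{1,\dots,n\}$ gives $\dot{\tilde x}=\Gamma R(\tilde x)=0$ directly, so $\tilde x(t)\in E_{x_\circ}$. In the convex case, $\dot{\tilde x}_i=0$ for $i\in\bar I_k$ reads $\Gamma_{\bar I_k}R(\tilde x)=0$; condition C5$'$i, $c_k\in\im(\Gamma_{\bar I_k}^T)$, then gives $c_k^T R(\tilde x)=0$, i.e.\ $V(\tilde x)=0$, so $R(\tilde x)\in\ker C=\ker\Gamma$ by C2$'$ and again $\dot{\tilde x}=0$. The recursion over critical subnetworks terminates because each reduction strictly decreases the species count, and C5 guarantees the interior condition at every level.

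I expect the main obstacle to be the boundary bookkeeping rather than the interior estimate: proving rigorously that the zero set of a trajectory confined to $\ker\dot V$ is a siphon whose output reactions vanish, that the restriction of $V$ to $F$ is still a PWLR Lyapunov function of the critical subnetwork (so that C1--C4 and hence the propagation step survive the reduction), and that the noncritical case genuinely collapses to an independent, lower-dimensional compatibility class so the induction is well founded. A secondary delicate point is justifying the implication $\dot{\tilde x}_i=0\ (i\in I_k)\Rightarrow c_\ell^T\dot R=0\ (\ell\in L_k)$ uniformly in $t$, which requires differentiating the identities $h_{s_{k\ell}}^T R(\tilde x(t))=0$ along the whole trajectory rather than at a single instant, and selecting an evaluation time (or passing to the limit set) at which all species outside $Z$ are simultaneously positive so that A4 may be applied.
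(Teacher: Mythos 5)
Your overall route is the same as the paper's: reduce boundary initial conditions to the interior case through the siphon/invariant-face correspondence of \cite{Angeli07} and a recursion over critical subnetworks, then run the propagation $c_k^T\dot R=0\Rightarrow\dot{\tilde x}_i=0$ for $i\in I_k$ (via C4a and A4) $\Rightarrow h_{s_{k\ell}}^T R(\tilde x(t))=0$ (via \eqref{e.decreasing} and nonnegativity of $\Sigma_k H R$ on $\cW_k$) $\Rightarrow c_\ell^T\dot R=0$ for $\ell\in L_k$ (via C3), iterate to $\bar I_k$, and close with C5i, or with C5$'$i together with C2$'$ in the convex case. However, the two ``delicate points'' you defer at the end are not peripheral: they are exactly the analytic content of the paper's proof, so as it stands your proposal has genuine gaps.

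The first gap is the lack of a \emph{fixed} maximizing index valid on an open set of times. You argue with ``an index $k\in K_{\tilde x(t)}$ attaining the maximum,'' but this index may change with $t$, and every identity you need to differentiate along the trajectory ($c_k^T R(\tilde x(t))=V(\tilde x(t))$ and $h_{s_{k\ell}}^T R(\tilde x(t))=0$) holds only at those instants where $k$ is active; pointwise equalities cannot be differentiated, and ``selecting an evaluation time'' does not repair this. The paper resolves it with the Baire category theorem: the sets $\mathcal T_k=\{t>0: c_k^T R(\tilde x(t))=V(\tilde x(t))\}$ are closed and cover the maximal interval of existence, hence some $\mathcal T_{k^\star}$ contains an open interval $\mathcal T$, on which all subsequent identities hold identically and the iteration (which keeps producing equalities valid on the same $\mathcal T$) is legitimate. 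The second gap concerns A4: strict monotonicity is assumed only on $\mathbb R_+^n$, so your propagation needs all species outside your zero set $Z$ to be \emph{simultaneously} positive on an open set of times, whereas your definition of $Z$ only gives, for each $i\notin Z$, positivity at some time depending on $i$. The paper avoids this by a trichotomy on $\tilde x(0)$ rather than on the trajectory's persistent zero set: if $\tilde x(0)\in\mathbb R_+^n$ the solution remains in the open orthant; if $\tilde x(0)$ lies on a non-invariant face then $\tilde x(t)\in\mathbb R_+^n$ for all $t>0$; and if it lies on an invariant face $\Psi_P$ (necessarily a siphon face), the zeroed species and their output reactions are deleted and the trichotomy is applied recursively to the subnetwork, the recursion being well founded since each step removes species. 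One point you flag is indeed left implicit in the paper as well, namely that the induced function on a critical subnetwork still satisfies C1--C4, so that criticism is fair but does not distinguish your argument from the paper's.
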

\section{Necessary Conditions for the Existence of PWLR functions}
As it is difficult to characterize exactly $\mathscr P$, i.e. the class of CRNs which admit a PWLR Lyapunov function, it is desirable to derive conditions which are necessary for a network to belong to $\mathscr P$. In this section, we will derive two conditions.

\subsection{Property of the Jacobian of $\mathscr P$ Networks}
We have shown in Corollary \ref{cor1} that networks in $\mathscr P$ with bounded trajectories and satisfying the LaSalle's condition can not have multiple isolated stoichiometrically compatible equilibria. In this subsection, we present a result along these lines by showing that the Jacobian matrix of networks belonging to $\mathscr P$ satisfies a property that has strong implications on uniqueness of equilibria and the injectivity of the map $F(x)=\Gamma R(x)$. \\
In order to introduce it, we need to define some notation. A matrix is said to be a $P$ matrix if all its principal minors are positive, and is said to be $P_0$ if all its principal minors are nonnegative. In particular, it encompasses $M$-matrices as a subclass. 
We state our next theorem as follows:

\begin{theorem}\label{th.p0}Given $\Gamma$. If $\mathscr N_\Gamma \subset \mathscr P$, then the Jacobian $-\Gamma \frac{\partial R}{\partial x}(x)$ is a $P_0$ matrix for all $x$, and for all networks in $\mathscr N_\Gamma$.
\end{theorem}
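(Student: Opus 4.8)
The plan is to pass from the existence of the Lyapunov function to a spectral statement about the linearizations, and then to read off the principal-minor inequalities from that spectral statement. First I would invoke Theorem \ref{cont.vs.cvx} to assume without loss of generality that the function is convex, $\tilde V(r)=\|Cr\|_\infty$ with $\ker C=\ker\Gamma$. Writing $D:=\frac{\partial R}{\partial x}(x)$, the nonincreasingness of $V$ means, after freezing the Jacobian at a point, that the seminorm $\|Cr\|_\infty$ does not increase along the linear flow $\dot r=D\Gamma r$. Because this holds for the whole family $\mathscr K_\Gamma$ and, via A4 and Remark \ref{rem.sign}, for every matrix $D$ in the sign class prescribed by $\Gamma$, I would argue that $D\Gamma$ can have no eigenvalue with strictly positive real part: an eigenvector $w$ for such an eigenvalue is nonzero and satisfies $D\Gamma w=\lambda w$ with $\lambda\ne 0$, hence cannot lie in $\ker\Gamma=\ker C$, so the corresponding mode is detected by $\|C\cdot\|_\infty$ and would grow without bound, contradicting nonincrease. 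Since $D\Gamma$ and $\Gamma D$ share the same nonzero spectrum, $\Gamma D$ likewise has all eigenvalues in the closed left half-plane.

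The second ingredient is an elementary determinant lemma: if $B\in\mathbb R^{q\times q}$ is real with every eigenvalue in the closed left half-plane, then $\det(-B)\ge 0$. Indeed, grouping the spectrum into real eigenvalues (each $\le 0$) and conjugate pairs (each pair contributing $|\lambda|^2>0$) gives $\det B=(-1)^{q}\cdot(\text{nonnegative})$, so $\det(-B)=(-1)^{q}\det B\ge 0$. Applied to $B=\Gamma D$ this already yields the top principal minor $\det(-\Gamma D)\ge 0$; the task is to obtain the same inequality for every principal submatrix $(-\Gamma D)_{SS}$, $S\subseteq\{1,\dots,n\}$.

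The device for the submatrices is a scaling limit. For fixed $S$, I would replace $D$ by $D^{(t)}$, obtained from $D$ by multiplying every column indexed by $\bar S=\{1,\dots,n\}\setminus S$ by $t>0$ and leaving the columns indexed by $S$ unchanged. Since multiplying by $t>0$ preserves signs, $D^{(t)}$ still lies in the sign class, so by the argument of the first step applied to $D^{(t)}$ the matrix $\Gamma D^{(t)}$ has its spectrum in the closed left half-plane for every $t>0$. As $t\to 0^+$ we have $\Gamma D^{(t)}\to \Gamma D^{(0)}$, whose columns indexed by $\bar S$ vanish; ordering $S$ first, $\Gamma D^{(0)}=\begin{bmatrix}(\Gamma D)_{SS}&0\\ (\Gamma D)_{\bar S S}&0\end{bmatrix}$, so its characteristic polynomial is $\lambda^{|\bar S|}\det(\lambda I-(\Gamma D)_{SS})$ and its eigenvalues are those of $(\Gamma D)_{SS}$ together with zeros. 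Eigenvalues depend continuously on the entries and the closed left half-plane is closed, so $\Gamma D^{(0)}$, and hence $(\Gamma D)_{SS}$, has spectrum in the closed left half-plane. The determinant lemma then gives $\det\big((-\Gamma D)_{SS}\big)=(-1)^{|S|}\det\big((\Gamma D)_{SS}\big)\ge 0$. As $S$ was arbitrary, all principal minors of $-\Gamma D$ are nonnegative, i.e. $-\Gamma D$ is a $P_0$ matrix, and this holds at every $x$ and for every network in $\mathscr N_\Gamma$.

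I expect the main obstacle to be the first step: turning the nonincreasingness of the PWLR Lyapunov function into the clean spectral fact that $\Gamma D$ has no eigenvalue with positive real part, uniformly over the entire sign class. The delicate points are that $\tilde V$ is only a seminorm (vanishing on $\ker\Gamma$), so one must check that unstable modes are genuinely transverse to this kernel, which is exactly where $\ker C=\ker\Gamma$ enters, and that the non-increase proved for the nonlinear CRN transfers to the frozen linear systems $\dot r=D\Gamma r$ for all admissible $D$. By contrast, the naive route of adjoining outflow reactions $X_i\to 0$ to realize the diagonal perturbation $-\Gamma D+\mathrm{diag}(d)$ directly fails, because the extra term it introduces is not controlled by the rate-space Lyapunov function; the scaling limit above is what circumvents this.
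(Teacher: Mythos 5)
Your route is genuinely different from the paper's proof: the paper never looks at spectra, but works purely algebraically, using C1 and the term-wise sign conditions behind C4 to produce, for every signature matrix $\Sigma_k$, a vector $\xi_k>0$ with $-\xi_k^T\Sigma_k J\Sigma_k\ge 0$ (where $J$ is built from $\Gamma\frac{\partial R}{\partial x}$), and then quotes the Berman--Plemmons signature characterization of $P_0$ matrices. The second half of your argument is correct and complete: the determinant lemma for real matrices with spectrum in the closed left half-plane, the fact that $D\Gamma$ and $\Gamma D$ share nonzero spectrum, and the column-scaling family $D^{(t)}$ (which indeed stays in the sign class and, by continuity of eigenvalues, localizes the spectrum of every principal submatrix $(\Gamma D)_{SS}$) together give nonnegativity of all principal minors of $-\Gamma D$. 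That scaling limit is a nice substitute for the combinatorics that the paper buries in the $P_0$ characterization.

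The genuine gap is the step you yourself flag as the main obstacle, and it is exactly where the content lies: you assert, but do not prove, that nonincreasingness of $V$ along CRN trajectories transfers to global nonincrease of $\|Cr\|_\infty$ along the frozen flows $\dot r=D\Gamma r$. The hypothesis only gives $\max_{k\in K_x}c_k^T D\Gamma R(x)\le 0$ at points $r=R(x)$, i.e.\ at \emph{nonnegative} rate vectors tied to the Jacobian at the same state; your contradiction instead follows a frozen trajectory along an unstable mode, and such a flow need not preserve the nonnegative orthant (off-diagonal entries of $D\Gamma$ can be negative whenever two reactions share a reactant), so the growing mode may evolve entirely where no inequality has been established. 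Two ways to close this, both available inside the paper. (i) After your reduction via Theorem \ref{cont.vs.cvx}, invoke Theorem \ref{th.checkcvx}: the convex PWLR Lyapunov function satisfies C4$'$, and C4$'$ holds on \emph{all} of each cone, not just its positive part --- for $r$ with $c_k^Tr=\|Cr\|_\infty$, condition \eqref{e.xdecreasing} forces $\nu_{ki}\gamma_i^Tr\le 0$, whence every term in $c_k^TD\Gamma r=\sum_{i\in I_k}\sum_{j\in J_{ki}}c_{kj}D_{ji}\gamma_i^Tr$ is nonpositive for every sign-class $D$; this yields the global frozen nonincrease you need. (ii) Alternatively, keep the inequality only on the positive orthant (which itself requires realizing an arbitrary pair $(r,D)$, $r\gg0$, $D$ in the sign class, by some kinetics in $\mathscr K_\Gamma$ --- the decoupling the paper performs inside the proof of Theorem \ref{th.checkcont}), and repair the orthant problem by translation: for $\mu\gg0$ with $\mu\in\ker\Gamma=\ker C$, the shifted curve $r(t)+M\mu$ is again a solution of $\dot r=D\Gamma r$ with the same value of $\|C\cdot\|_\infty$, and for $M$ large it stays in the positive orthant over any finite horizon, which is all the one-period growth contradiction needs. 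With either repair your proof goes through; without one of them, the spectral claim on which everything rests is unsupported.
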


\begin{remark} It is known that a map is injective if its Jacobian matrix is $P$ \cite{nikaido65}. In our case, the Jacobian matrix being $P_0$ implies that the network can not admit multiple nondegenerate positive equilibria, where no assumption on boundedness is needed \cite[Appendix B]{banaji09}.
\end{remark}

\subsection{Constraints on the Possible Sign Patterns}
It is known that algorithms for checking that a given matrix is $P$ are exponential in time \cite{berman94}, therefore we provide in this subsection a weaker necessary condition which can be cast as a linear program. \\
 As mentioned in the proof of Theorem \ref{th.checkcont}, the nonpositivity of every term in the expansion of $\dot V$ is needed, and the sign of the derivative depends on the sign of $\dot x$. Hence, we partition $\mathbb R^\nu$ into \textit{sign regions} within which $\dot x$ has a constant term-wise sign. By AG2, we can define sign regions in an analogous way to \S 2.1, where we set $H=\Gamma$. Thus, we may write
\begin{align}\label{e.sign_regions}
 \mathcal S_k = \{r \in \mathbb R^\nu: \Sigma_k \Gamma r \ge 0 \}, \, k=1,..,m.
\end{align}
Note that the signature matrix $\Sigma_k$ specifies the sign of $\dot x$ in the region $\mathcal S_k$.
As a result, any linear-in-rates component $c_k^T R(x)$ operating on $\mathcal S_k$ must satisfy the term-wise sign constraint noted in \eqref{e.Vdot_exp}.
To encode this,  we need further notation.  Define the diagonal matrices $B_k = \diag [ b_{k1} \, ... \, b_{k\nu} ] $, $k=1,..,m/2$, where:
 \begin{equation}\label{e.sign_constraints} b_{kj} = \left \{ \begin{array}{rl} 1, & \mbox{if} \, M_j=\varnothing \\ 0, & \mbox{if}\ \exists {i_1,i_2\in M_j} \, \mbox{such that} \,  \sigma_{k i_1} \sigma_{k i_2} <0,  \\ -\sigma_{ji^*}, & \mbox{otherwise, for any} \, i^* \in M_j .    \end{array} \right . \end{equation}

Therefore, a linear program can be used to test the following necessary condition:
\begin{theorem}\label{th.necessary} Given $\Gamma$. Consider the network family $\mathscr N_\Gamma$, with $\{B_k\}_{k=1}^m$ defined as above, and let $U$ be a matrix whose columns form a basis for $\ker \Gamma$. If $\mathscr N_{\Gamma}$ admits a PWLR Lyapunov function, then there exists $0 \ne \zeta_k \in \mathbb R^\nu, k=1,..,m/2$ such that $ \zeta_k^T B_k U = 0 $, with $\zeta_{kj} \ge0 , j\in \{1,..,\nu\}\backslash \mathcal I.$
\end{theorem}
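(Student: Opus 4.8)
The plan is to suppose that a PWLR Lyapunov function for $\mathscr N_\Gamma$ exists, fix its coefficient matrix $C$ with rows $c_1^T,\dots,c_{m/2}^T$ (extended by $c_{m+1-k}:=-c_k$), and then manufacture, for each sign region $\mathcal S_k$, the required vector $\zeta_k$ by sign-folding an active coefficient against the diagonal matrix $B_k$. Two ingredients come essentially for free from the hypotheses. First, positive-definiteness (C2 of Theorem \ref{th.checkcont}) gives $\ker C=\ker\Gamma$, hence $c_\ell^T U=0$ for every row, since the columns of $U$ span $\ker\Gamma=\ker C$; the same identity also forces $c_\ell\neq 0$, because a vanishing row would make $V$ identically zero on the interior of the PWLR region $\cW_\ell$, where rate vectors outside $\ker\Gamma$ live, contradicting positive-definiteness. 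Second, the chain-rule expansion used in the proof of Theorem \ref{th.checkcont} reads, at a point $x$ where $V=c_\ell^T R$, as $\dot V(x)=\sum_{j=1}^\nu c_{\ell j}\sum_{i\in M_j}\frac{\partial R_j}{\partial x_i}\,\dot x_i$, with $\dot x_i=\gamma_i^T R(x)$.

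Now fix a sign region $\mathcal S_k$. The positivity property of Proposition \ref{prop1} (applied with $H=\Gamma$, which is legitimate by AG2) yields a rate vector $r^*\gg 0$ in the interior of $\mathcal S_k$, and by perturbing it slightly within the open set $\mathcal S_k^\circ$ we may also assume $r^*$ lies in the interior of a single PWLR region $\cW_\ell$, so that the active coefficient is the fixed vector $c_\ell$. In $\mathcal S_k^\circ$ the signs of $\dot x_i=\gamma_i^T r^*$ are constant and equal to $\sigma_{ki}$. The key step is to exploit robustness over the whole kinetics class $\mathscr K_\Gamma$: keeping the point $r^*=R(x^*)$ fixed (hence keeping $c_\ell$ and $\dot x=\Gamma r^*$ fixed) while letting a single Jacobian entry $\frac{\partial R_j}{\partial x_i}$ grow without bound makes the corresponding term dominate $\dot V$, so the requirement $\dot V\le 0$ forces the term-wise inequality $c_{\ell j}\,\dot x_i\le 0$, i.e. $c_{\ell j}\,\sigma_{ki}\le 0$, for every reactant $i\in M_j$. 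Reading this against the definition \eqref{e.sign_constraints} of $B_k$ gives exactly: $c_{\ell j}=0$ whenever reaction $j$ has reactants of conflicting sign (the case $b_{kj}=0$); $c_{\ell j}b_{kj}\ge 0$ whenever its reactants share a common sign $\sigma_{ki^*}$ (the case $b_{kj}=-\sigma_{ki^*}$); and no constraint for inflows $j\in\mathcal I$ (where $M_j=\varnothing$ and $b_{kj}=1$).

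With these sign relations in hand I set $\zeta_k:=B_k c_\ell$ and verify the three required properties. Nonnegativity off the inflows is immediate: for $j\notin\mathcal I$ one has $\zeta_{kj}=b_{kj}c_{\ell j}\ge 0$ by the previous paragraph (and $=0$ on the conflicting coordinates). Nontriviality $\zeta_k\neq 0$ follows because $c_\ell\neq 0$ and $B_k$ annihilates only coordinates on which $c_\ell$ already vanishes, so some non-conflicting coordinate survives. Finally, the kernel identity is a short computation: $\zeta_k^T B_k U=c_\ell^T B_k^2 U$, and since $B_k^2$ is the diagonal $0/1$ matrix that zeroes precisely the conflicting coordinates where $c_\ell$ is already zero, we get $c_\ell^T B_k^2=c_\ell^T$, whence $\zeta_k^T B_k U=c_\ell^T U=0$ by positive-definiteness. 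Carrying this out for $k=1,\dots,m/2$ and invoking the symmetry of the partition (Proposition \ref{prop1}, part 3) to dispose of the remaining regions completes the argument.

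The step I expect to be the main obstacle is the realizability/flexibility built into the second paragraph: one must ensure that for every sign region $\mathcal S_k$ there is a genuinely attainable rate vector $r^*=R(x^*)$ with $x^*\gg 0$ sitting in the interior of some PWLR region inside $\mathcal S_k^\circ$, and that the class $\mathscr K_\Gamma$ is rich enough to pin $r^*$ while driving an individual reactant-derivative $\frac{\partial R_j}{\partial x_i}$ to infinity. The strict monotonicity A4 forbids switching other Jacobian entries off, so the forcing must be done by a domination/limiting argument rather than by isolating a single entry. Establishing this local value-plus-derivative realizability, together with the reduction from aggregate to term-wise nonpositivity, is where the assumptions A1--A4 and the positivity property of Proposition \ref{prop1} must be used most carefully.
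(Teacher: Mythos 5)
Your proposal is correct and takes essentially the same route as the paper: the paper's proof likewise picks, for each sign region $\mathcal S_k$, a PWLR region $\cW_\ell$ with $\cW_\ell^\circ \cap \mathcal S_k^\circ \ne \varnothing$, invokes the term-wise condition $c_{\ell j}\dot x_i \le 0$ on $\cW_\ell$ (established in the proof of Theorem \ref{th.checkcont} by the same unbounded-Jacobian-entry robustness argument you use), reads off $c_{\ell j} = |c_{\ell j}|\, b_{kj}$ for $j\notin\mathcal I$, and then obtains $\zeta_k$ with $\zeta_k^T B_k U = c_\ell^T U = 0$ from $\ker C = \ker\Gamma$. Your additional verifications (that $c_\ell \ne 0$ forces $\zeta_k \ne 0$, and the $B_k^2$ computation) and the realizability caveat you flag are details the paper leaves implicit, but they do not alter the argument.
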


\begin{remark} As can be noted from the proofs, the necessary condition given by Theorem \ref{th.p0} is a consequence of the existence of a function $V$ that satisfies C1, C2, C4, i.e. it does not assume continuity. On the other hand, Theorem \ref{th.necessary} only assumes that $V$ exists satisfying conditions C2 and C4.
\end{remark}

Using Theorem \ref{th.necessary}, a simple graphical test for the nonexistence of a PWLR Lyapunov function can be derived. It can be stated as follows:
\begin{corollary}\label{cor.criticalSiphon} Given $\Gamma$. Consider the network family $\mathscr N_\Gamma$.
   If there exists a critical deadlock $P$, then $\mathscr N_{\Gamma} \not\in \mathscr P $.
\end{corollary}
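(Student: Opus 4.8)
The plan is to argue by contradiction through the necessary condition of Theorem~\ref{th.necessary}, exploiting two consequences of the hypothesis. First, a deadlock satisfies $\Lambda(P)=\mathbf V_R$, so every reaction has a reactant in $P$; in particular no reaction is an inflow, hence $\mathcal I=\varnothing$ and the sign constraint in Theorem~\ref{th.necessary} becomes $\zeta_k\ge 0$ on every coordinate. Second, criticality says there is no conservation law supported in $P$: applying a Gordan-type theorem of the alternative to the rows $\{\gamma_i^T:i\in P\}$ of $\Gamma$ yields a vector $r^\ast$ with $\gamma_i^T r^\ast<0$ for all $i\in P$. After a small perturbation I would extend the resulting signs to a full signature, i.e. select a nonempty-interior sign region $\mathcal S_{k^\ast}$ of \eqref{e.sign_regions} with $\sigma_{k^\ast i}=-1$ for every $i\in P$.

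In this region the deadlock structure pins down $B_{k^\ast}$ via \eqref{e.sign_constraints}: since every reaction consumes some species of $P$, each reaction has a reactant of sign $-1$, so $b_{k^\ast j}=+1$ whenever the reactants of $\R_j$ share a common sign and $b_{k^\ast j}=0$ otherwise; in either case $B_{k^\ast}\ge 0$. Now invoke AG2 to pick $v\in\ker\Gamma$ with $v\gg 0$. Because the columns of $U$ span $\ker\Gamma$, the identity $\zeta_{k^\ast}^T B_{k^\ast}U=0$ forces $\zeta_{k^\ast}^T B_{k^\ast}v=\sum_j b_{k^\ast j}\,\zeta_{k^\ast j}\,v_j=0$, and nonnegativity of all three factors then forces $\zeta_{k^\ast j}=0$ for every reaction with $b_{k^\ast j}=+1$. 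The aim is to conclude $\zeta_{k^\ast}=0$, contradicting $\zeta_{k^\ast}\ne 0$ and hence the existence of the Lyapunov function.

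The hard part is precisely the reactions left over, namely those with $b_{k^\ast j}=0$ --- reactions having a reactant outside $P$ whose sign is opposite to that of its reactant in $P$ --- since $B_{k^\ast}$ annihilates their coordinates and orthogonality to $v$ gives no information about them. Closing this gap is the main obstacle: one must choose the extension of the signature off $P$ (again using criticality, now beyond $P$) so that $B_{k^\ast}$ carries no zero diagonal entry, or otherwise show the deadlock forbids a nonzero $\zeta_{k^\ast}$ supported on such reactions. If this single-region argument proves too delicate, I would instead route through Theorem~\ref{th.p0}: assuming $\mathscr N_\Gamma\in\mathscr P$, it suffices to exhibit mass-action constants and a state $x\gg0$ at which a principal minor of $-\Gamma\frac{\partial R}{\partial x}(x)$ is negative, contradicting the $P_0$ property. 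The critical-deadlock data --- the same alternative vector $r^\ast$ together with a reaction that over-produces into $P$ --- is what I would use to drive that minor negative.
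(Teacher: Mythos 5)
Your plan follows the paper's own route: fix a sign region in which every species of $P$ carries one common sign (you take $-1$ where the paper takes $+1$; this is immaterial by the symmetry of the signature matrices), prove that such a region is nonempty via criticality and a theorem of the alternative (your Gordan step is the paper's Farkas step), use the deadlock property to pin down the diagonal of $B_{k^\ast}$, and then play $\zeta_{k^\ast}^T B_{k^\ast} U = 0$ against a strictly positive vector $v \in \ker\Gamma$. All of that is correct. But the gap you flag at the end is genuine, and neither of your suggested repairs closes it. Choosing the signature off $P$ so that $B_{k^\ast}$ has no zero diagonal entry is impossible in general: if some reaction has a reactant $X_i \in P$ and a reactant $X_{i'} \notin P$ with, say, $\gamma_{i'} = -\gamma_i$ (a two-species conservation relation whose support is not inside $P$, so criticality is untouched), then every sign region with $\sigma_{i} = -1$ has $\sigma_{i'} = +1$, and that reaction is in conflict, giving $b_{k^\ast j}=0$. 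The fallback through Theorem \ref{th.p0} is only a gesture: that a critical deadlock forces some principal minor of $-\Gamma\frac{\partial R}{\partial x}(x)$ to go negative is itself a nontrivial claim which you would have to prove, and the proposal contains no argument for it.

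The missing idea is that Theorem \ref{th.necessary} must be invoked not through its literal statement but through what its proof establishes: the annihilating vector is not arbitrary — one may take $\zeta_{k^\ast}$ so that $\zeta_{k^\ast}^T B_{k^\ast} = c_\ell^T$, where $c_\ell$ is the coefficient row of the PWLR function active on a region $\cW_\ell$ with $\cW_\ell^\circ \cap \mathcal S_{k^\ast}^\circ \neq \varnothing$. Condition C4a (see the proof of Theorem \ref{th.checkcont}) forces $c_{\ell j} = |c_{\ell j}|\, b_{k^\ast j}$ for every non-inflow reaction, so the troublesome reactions with $b_{k^\ast j}=0$ automatically satisfy $c_{\ell j}=0$: they carry no weight in $c_\ell$ at all. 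Moreover $c_\ell \neq 0$, by positive-definiteness C2 ($\ker C = \ker\Gamma$) together with the fact that $\cW_\ell$ has nonempty interior. Your computation then closes in one line: $c_\ell = B_{k^\ast}\zeta_{k^\ast} \ge 0$ componentwise, while $c_\ell^T v = 0$ with $v \gg 0$, hence $c_\ell = 0$, a contradiction. In other words, a nonzero $\zeta_{k^\ast}$ supported on the zero entries of $B_{k^\ast}$ is excluded not because it cannot exist as a solution of the linear system, but because it corresponds to a vanishing Lyapunov coefficient vector, which positive-definiteness forbids. (The paper's own write-up of the corollary is loose at exactly this point — it asserts "$\zeta_k B_k v \le 0$ \ldots is not allowable" — but the strengthening just described is precisely what its proof of Theorem \ref{th.necessary} supplies.)
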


\section{Construction of PWLR Lyapunov Functions}
 It is known that constructing convex PWL Lyapunov functions even in the case of systems evolving according to linear equations is not an easy task, and no simple necessary and sufficient conditions are available \cite{polanski95}.
In this section we propose several constructions of PWLR Lyapunov functions. Thus, we propose methods to find $(H,C)$ using the representation \eqref{e.conLF}, and $C$ using the convex representation \eqref{e.cvxLF}. The main difficulty, however, is that \eqref{e.continuety}, \eqref{e.decreasing}, \eqref{e.xdecreasing} are bilinear in the variables.




 \subsection{Construction of PWLR Lyapunov function over a given partition }
 \label{sec.LPalg}

Assume that the partition generator $\hat H$ is fixed, hence $\{\hat\cW_k\}_{k=1}^{m_h}$ is determined. Then, conditions C1,C3 are linear in $C$. Furthermore, the inclusion $\ker C \subset \ker \Gamma$ is implied by C1. The constraint \eqref{e.decreasing}, however, is nonconvex. Nevertheless, we shall rewrite it as a linear constraint.\\ Consider the sign regions $\{\mathcal S_k\}_{k=1}^{m_s}$ defined in \eqref{e.sign_regions}. If we intersect the two partitions  $\{\hat\cW_k\}_{k=1}^{m_h}$, $\{\mathcal S_k\}_{k=1}^{m_s}$ with the corresponding $c_k$'s inherited from $\{\hat\cW_k\}$. The matrix generating the new partition will be $H=[\Gamma^T \hat H^T]^T$. Therefore, we may consider, w.l.o.g, partitions induced by matrices of the form $H=[\Gamma^T \hat H^T]^T$, with corresponding sign matrices $\Sigma_k=\diag[\Sigma_k^{(s)} \ \Sigma_k^{(h)}]$. Note that we can consider $\{\cW_k\}_{k=1}^{m}$ as a refined partition of $\{\mathcal S_k\}_{k=1}^{m_s}$, hence we 
define the map $q(.): k \mapsto \ell $ if $\cW_k \subset \mathcal S_\ell$, and the notation $q(k)=q_k$ is used. Thus, we present the following:
\begin{theorem}\label{th.fixedH}
 Consider the system \eqref{e.ode}, with $H=[\Gamma^T \hat H^T]^T$, $\{\Sigma_k\}_{k=1}^m$,$\{B_k\}_{k=1}^{m_s}$, $q_k$ given as above. Consider the following linear program:
\begin{equation*}
\begin{aligned} 
&  {\text{Find}}
& & c_k, \xi_k, \zeta_k \in \mathbb R^\nu, \eta_{kj} \in \mathbb R, k=1,..,\tfrac m2; j \in \mathcal N_k,\\
& \text{subject to}
& & c_k^T=\xi_k^T \Sigma_k H ,\\ &&&
  c_k^T=\zeta_{k}^T B_{q_k}, \\ &&&
 c_k-c_j=\eta_{kj}\sigma_{ks_{kj}}h_{s_{kj}}, \\ &&&
\xi_k \ge0,   \mathbf 1^T \xi_k>0,  \\ &&&
\zeta_{kj} \ge0 , j\in \{1,..,\nu\}\backslash \mathcal I.
\end{aligned}
\end{equation*}
Then there exists a PWLR Lyapunov function with partitioning matrix $H$ if and only if there exist feasible solution to the above linear program with C2 satisfied. Furthermore, the PWLR function can be made convex by adding the constraints $\eta_{kj}\ge 0$.\end{theorem}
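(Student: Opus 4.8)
The plan is to reduce everything to the characterization already obtained in Theorem \ref{th.checkcont}, and then to show that, once the partition is refined by the sign regions of $\Gamma$, the conditions C1--C4 translate into the linear constraints of the stated program, with C2 handled separately. The key structural observation is that since the rows of $\Gamma$ are included in $H=[\Gamma^T\ \hat H^T]^T$, every cone $\cW_k$ lies inside a single sign region $\mathcal S_{q_k}$, so that the term-wise sign of $\dot x=\Gamma R(x)$ is constant and equal to $\Sigma_{q_k}$ on $\cW_k$. First I would note that a continuous PWL function over $\hat H$ is, without loss of generality, a continuous PWL function over the refinement $H$ (the same function, merely subdivided), so existence over $\hat H$ is equivalent to existence over $H$; this legitimizes working with $H$ throughout.

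I would then match the constraints one by one. By the paper's convention that $\xi_k>0$ means $\xi_k\ge0$ with $\mathbf 1^T\xi_k>0$, the first LP constraint together with $\xi_k\ge0,\ \mathbf 1^T\xi_k>0$ is literally C1; moreover, because $c_k^T=\xi_k^T\Sigma_k H$ forces each $c_k$ into the row space of $H$, one obtains $\ker\Gamma\subseteq\ker C$ for free, so that only the reverse inclusion remains, which is precisely what the clause ``with C2 satisfied'' records. The continuity constraint $c_k-c_j=\eta_{kj}\sigma_{ks_{kj}}h_{s_{kj}}$ is C3 after the harmless reparametrization $\hat\eta_{kj}:=\eta_{kj}\sigma_{ks_{kj}}$, a bijection of the scalar unknowns since $\sigma_{ks_{kj}}=\pm1$.

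The crux is nonincreasingness: showing that $c_k^T\dot R(x)\le0$ on $\cW_k$ for every kinetics in $\mathscr K_\Gamma$ is equivalent to $c_k^T=\zeta_k^T B_{q_k}$ with $\zeta_{kj}\ge0$ for $j\notin\mathcal I$. Here I would expand
\[
 c_k^T\dot R = \sum_{j}\sum_{i\in M_j} c_{kj}\,\frac{\partial R_j}{\partial x_i}\,\dot x_i ,
\]
observe that $\partial R_j/\partial x_i>0$ for $i\in M_j$ by A4 and that $\dot x_i=\gamma_i^T R(x)$ carries the fixed sign $\sigma_{q_k,i}$ on $\cW_k$. Since the positive partials can be chosen independently within $\mathscr K_\Gamma$, the sum is nonpositive for all kinetics if and only if every term is, i.e. $\sgn(c_{kj})\,\sigma_{q_k,i}\le0$ for all $i\in M_j$. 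This is exactly the sign pattern encoded by the diagonal $B_{q_k}$ of \eqref{e.sign_constraints}: inflows ($M_j=\varnothing$) contribute nothing and leave $c_{kj}$ free ($b_{q_k,j}=1$, $\zeta_{kj}$ unconstrained); a reaction with reactants of conflicting sign forces $c_{kj}=0$ ($b_{q_k,j}=0$); a reaction with reactants of common sign forces $\sgn(c_{kj})=-\sigma_{q_k,i^*}$ ($b_{q_k,j}=-\sigma_{q_k,i^*}$, $\zeta_{kj}\ge0$). Reading $c_k^T=\zeta_k^T B_{q_k}$ componentwise returns precisely these three cases, matching C4, with the sign-consistency requirement C4(a) expressing the well-posedness of the common reactant sign. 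The necessity direction is where strict monotonicity A4 and the nonemptiness $\cW_k^\circ\cap\mathbb R_+^\nu\ne\varnothing$ from Proposition \ref{prop1} are essential, guaranteeing that the extremal kinetics realizing a bad term are admissible; this is the step I expect to require the most care.

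Finally, combining these equivalences with Theorem \ref{th.checkcont} yields the stated if-and-only-if. For convexity I would invoke the convexity clause of Theorem \ref{th.checkcont}, which asks that the coefficient of $h_{s_{kj}}$ times $\sigma_{ks_{kj}}$ be nonnegative; under the present parametrization that coefficient is $\eta_{kj}\sigma_{ks_{kj}}$, so the condition becomes $\eta_{kj}\sigma_{ks_{kj}}^2=\eta_{kj}\ge0$, which is exactly the additional LP constraint $\eta_{kj}\ge0$.
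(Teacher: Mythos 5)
Your proposal is correct and follows essentially the same route as the paper's own proof: it reduces the linear program to conditions C1--C4 of Theorem \ref{th.checkcont}, identifies the first and third constraints with C1 and C3 (modulo the $\sigma_{ks_{kj}}$ reparametrization), and establishes C4 $\Leftrightarrow$ the second constraint via the sign-region compatibility argument that the paper borrows from the proof of Theorem \ref{th.necessary}, with C2 imposed separately and convexity read off from the convexity clause of Theorem \ref{th.checkcont}. Your write-up is in fact more explicit than the paper's (notably on the $B_{q_k}$ case analysis, the free inclusion $\ker\Gamma\subseteq\ker C$, and the role of A4 in the necessity direction), but it is the same argument.
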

\begin{remark} A natural candidate for the partition matrix is $H=\Gamma$. Hence, we can write \[V(x)=c_k^T R(x)=\xi_k^{T} \Sigma_k \Gamma R(x) = \|\diag(\xi_k)\dot x \|_1, R(x) \in S_k.\] If we have additional constraint that for all $k$, $\xi_k=\mathbf 1$, then the Lyapunov function considered in \cite{maeda78},
\[ V(x)= \|   \dot x \|_1, \]
can be recovered as a special case. However, there are classes of networks for which $H=\Gamma$ does not induce a PWLR Lyapunov function, while there exists a partitioning matrix $\hat H$ which does. Understanding when this happens is a challenging open question.
\end{remark}

\subsection{Iterative Algorithm for Convex  PWLR functions}

In this subsection, we present an iterative algorithm for constructing convex PWLR Lyapunov functions. The idea is to start with an initial PWLR function, and aim for restricting the active region of each linear function $c_k^T R(x(t))$ to the region for which it is nonincreasing on it, i.e $ c_k^T \dot R(x(t))  \le 0 $. This is accomplished by adding extra linear components that satisfies certain properties.\\
Let $C_0 = [c_1 \ ... \ c_{m_0 } ] ^T  \in \mathbb R^{m_0  \times \nu}$, with the associated PWLR function. 
Define the \emph{active region} of a vector $ c_k$, $k=1,..,  {m_0} $, as:
\begin{equation*}
  \cW_0 (c_k) := \{r \in \mathbb R^\nu :  c_k^T r  \ge  c_j^T r  , - {m_0} \le k \le  {m_0},k \ne 0 \},
\end{equation*}
where $c_{-k}=-c_k$.
Assume that the associated CRN is given by \eqref{e.ode}. We define \emph{permissible region} of a linear component $ c_k $ to be the region for which it is nonincreasing. Hence,
\begin{align}\label{e.permissible} \nonumber
\mathcal P (c_k) &:=\{r \in \mathbb R^\nu :  \nu_{ki} \gamma_i ^T r  \le 0, i \in I_k \}  \\ &\subset    \{ \tilde r \in \mathbb R^\nu :\tilde r=R(x),  c_k^T \textstyle\frac{\partial R}{\partial x}(x) \Gamma R(x)  \le 0 \}, \vspace{0em}
\end{align}
where $\nu_{ki}=\sgn(c_{ki})$.
Note that in general, $\cW_0 (c_k) \not \subset \mathcal P(c_k)$. Therefore, we need to define a new PWL function with matrix $C_1$ so that $\cW_1 (c_k) \subset \mathcal P(c_k) $. To achieve this, we augment new rows to $C_0$. The new rows are of the form \vspace{0em}
\begin{equation}\label{e.alg_new} c_{m_0+i} := c_k + \nu_{ki} \gamma_i, i\in I_k. \vspace{0em} \end{equation}
Thus, $C_1^{'} := [C_0^T \ c_{m_0+1} \ ... \ c_{m_0+n} ] ^ T $. Finally, $C_1$ is defined by  eliminating linearly dependent pairs of rows from $C_1^{'}$.

 Hence, Algorithm 1 can be described as:
 \begin{enumerate}
   \item Given $C_0 = [c_1 \ ... \ c_{m_0} ] ^T  \in \mathbb R^{m_0/2 \times \nu}$, $k=1,..,m_0/2$, and $\ker \Gamma \subset \ker C_0$. Set $k=1$.
   \item Define $C_k^{'} := [C_{k-1}^T \ c_{m_k+1} \ ... \ c_{m_k+n} ] ^ T $, where $c_{m_k+i} := c_k + \nu_{ki} \gamma_i, i=1,...,n.$
   \item Define $C_k$ as $C_k^{'}$ with linearly dependent pairs of rows eliminated.
   \item If $C_{k}=C_{k-1}$ or $k>N$, stop.
   \item Set $k:=k+1$, and go to step 2,
 \end{enumerate}
 where $N$ is the maximum number of iterations allowed. \\
 If Algorithm 1 terminates then we state the following:

\begin{theorem}\label{th_alg}Consider \eqref{e.ode}. If Algorithm 1 terminates after finite number of iterations with C2$'$ with satisfied, then the resulting function is a PWLR Lyapunov function for the network family $\mathscr N_\Gamma$.
  \end{theorem}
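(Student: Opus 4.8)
The plan is to invoke Theorem \ref{th.checkcvx}: since the terminal matrix $C$ is assumed to satisfy C2$'$, it remains only to verify the nonincreasingness condition C4$'$ for the convex function $V(x)=\|CR(x)\|_\infty$ of the form \eqref{e.cvxLF}. First I would record the invariant that $\ker\Gamma\subset\ker C$ is preserved by every iteration: each augmenting row \eqref{e.alg_new} has the form $c_k+\nu_{ki}\gamma_i$, and since $\gamma_i$ is a row of $\Gamma$ and $\ker\Gamma\subset\ker c_k$ holds at the previous stage, any $v\in\ker\Gamma$ annihilates both summands. Hence the inclusion $\ker\Gamma\subset\ker C$ holds throughout, and the full equality required by C2$'$ is exactly the stopping hypothesis.

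The heart of the argument is a derivative computation combined with a geometric containment. Fixing an active index $k$ and writing $r=R(x)$, I would expand the directional term entering \eqref{e.Vdot1} as
\begin{equation*}
 c_k^T\dot R(x)=\sum_{i\in I_k}\Big(\sum_{j\in J_{ki}} c_{kj}\,\tfrac{\partial R_j}{\partial x_i}(x)\Big)\,\gamma_i^T r,
\end{equation*}
where the reduction to $i\in I_k$, $j\in J_{ki}$ uses A2--A3 (the partial $\partial R_j/\partial x_i$ vanishes unless $X_i$ is a reactant of the $j$-th reaction) together with $\dot x_i=\gamma_i^T r$. Under the sign-consistency part of C4$'$ the inner bracket equals $\nu_{ki}g_{ki}$ with $g_{ki}\ge0$, so that $c_k^T\dot R(x)=\sum_{i\in I_k}\nu_{ki}g_{ki}\,\gamma_i^T r$. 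It therefore suffices to show that on the active region of $c_k$ one has $\nu_{ki}\gamma_i^T r\le0$ for every $i\in I_k$, i.e. that this active region is contained in the permissible region \eqref{e.permissible}.

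To establish the containment I would use the termination condition itself. When the algorithm stops, appending the rows \eqref{e.alg_new} leaves $C$ unchanged, so for each $k$ and each $i\in I_k$ the vector $c_k+\nu_{ki}\gamma_i$ is linearly dependent on a row already present in $C$, and thus the term $|(c_k+\nu_{ki}\gamma_i)^T r|$ already competes in the maximum defining $\|Cr\|_\infty$. On the piece where $c_k^T r=\|Cr\|_\infty\ge0$ we then have $c_k^T r\ge (c_k+\nu_{ki}\gamma_i)^T r$, which rearranges to exactly $\nu_{ki}\gamma_i^T r\le0$. Substituting back into the expansion yields $c_k^T\dot R(x)\le0$ for every active $k$, hence $\dot V(x)\le0$; together with positive-definiteness (from C2$'$) and Theorem \ref{th.checkcvx} this identifies $V$ as a PWLR Lyapunov function for $\mathscr N_\Gamma$.

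I expect the main obstacle to be the bookkeeping around the elimination of linearly dependent rows. The clean conclusion $\nu_{ki}\gamma_i^T r\le0$ is immediate when $c_k+\nu_{ki}\gamma_i$ is retained exactly or coincides with a stored row, but one must check that the normalization chosen when discarding a parallel row does not weaken the half-space inequality: writing $c_k+\nu_{ki}\gamma_i=\theta c_\ell$ for the retained $c_\ell$, the estimate $c_k^T r\ge|c_\ell^T r|$ yields $\nu_{ki}\gamma_i^T r\le(|\theta|-1)\,c_k^T r$, which gives the desired sign only when $|\theta|\le1$, so one must argue that the elimination always keeps the representative of larger norm. A second point to handle with care is that the sign-consistency hypothesis of C4$'$ is genuinely maintained along the iterations, so that $\nu_{ki}$ is well defined at termination; I would either impose it on $C_0$ and show it is inherited by the augmented rows $c_k+\nu_{ki}\gamma_i$, or fold it explicitly into the termination check.
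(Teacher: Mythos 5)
Your proposal follows essentially the same route as the paper's proof: reduce the claim to C4$'$, expand $c_k^T \dot R(x)$ termwise, and use closure of the terminal matrix under the augmentation \eqref{e.alg_new} to show that the active region of each $c_k$ is contained in the permissible region $\mathcal P(c_k)$ --- precisely the paper's chain $\cW_{k}(c_k) \subset \cdots \subset \cW_1(c_k) \subset \mathcal P(c_k)$. The two technical points you flag (that row elimination must keep the larger-norm representative of a parallel pair so that $|\theta|\le 1$, and that sign-consistency must be preserved along iterations so $\nu_{ki}$ remains well defined) are genuine subtleties, but the paper's own proof leaves them implicit as well, so they reflect added care within the same argument rather than a different method.
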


\begin{remark} The formula \eqref{e.alg_new} is not the unique way for constructing new vectors. Indeed, one can replace the inequality $\nu_{ki} \gamma_i\le 0$ with any system of inequalities covering the same region. For instance, the region defined by the inequality $R_1- 2R_2 + R_3 \le 0$ is a subset of the region defined by the pair $ R_1-R_2 \le 0 , -R_2+R_3 \le 0$. Therefore, the \emph{standard setting} of Algorithm 1 means using \eqref{e.alg_new} with $C=\Gamma$. \vspace{-0.7em}
\end{remark}
\subsection{Special Constructions}
It is possible to construct PWLR Lyapunov functions for CRNs with specific structure. We state the following result which enjoys having easy-to-check graphical condition:
\begin{theorem}\label{th.maxmin}
 Consider the network family $\mathscr N_\Gamma$. Suppose the following properties are satisfied:
\begin{enumerate}
 \item $\dim(\ker \Gamma)=1$,
 \item $\forall X_i \in \mathbf V_S$, there exists a unique output reaction, i.e every row in $\Gamma$ has a unique negative element,
\end{enumerate}
Then,
\begin{enumerate}
\item the following is a PWLR function for the network family $\mathscr N_\Gamma$:
\begin{align}\label{e.maxmin}
 V(x)= \max_{1\le j \le \nu } \frac 1{v_j} R_j(x) -  \min_{1\le j \le \nu } \frac 1{v_j} R_j(x),
\end{align}
where $v =[ v_1 \, ... \, v_\nu ]^T \in \ker (\Gamma), v \gg 0 $.
\item LaSalle's interior condition holds if  $\mathscr A(\R_j) \cap \mathscr A(\R_\ell) \ne \varnothing$, for all $1\le j,\ell \le \nu $.
\item  If the network is conservative, then it is persistent, i.e, $\omega(x_0) \cap \partial \mathbb R_+^n = \emptyset$ for all $x_\circ$.
Furthermore, if there exists an isolated equilibrium, then it is a unique globally asymptotically stable equilibrium with respect to $\mathscr C_{x_\circ}$.
\end{enumerate}
\end{theorem}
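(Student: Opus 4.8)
The plan is to establish the three claims in order, treating the first as a direct verification and building the later ones on it.

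For claim~(1) I would first rewrite the candidate in the form covered by Theorem~\ref{th.checkcvx}. Writing the scaled rates $\tilde R_j := R_j/v_j$, one has $V(x)=\max_j\tilde R_j-\min_j\tilde R_j=\max_{j<\ell}|\tilde R_j-\tilde R_\ell|$, so $V=\|CR\|_\infty$ where the rows of $C$ are indexed by pairs and $c_{(j,\ell)}=\tfrac1{v_j}e_j-\tfrac1{v_\ell}e_\ell$. Condition~C2$'$ is immediate: $Cr=0$ iff $r$ is a scalar multiple of $v$, and since $\dim\ker\Gamma=1$ with $v\in\ker\Gamma$ this kernel is exactly $\mathrm{span}(v)=\ker\Gamma$. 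For the nonincreasing condition~C4$'$ I would argue directly that $\dot V\le0$. Using \eqref{e.Vdot1}, $\dot V$ equals $\max_{j\in U}\dot{\tilde R}_j-\min_{\ell\in L}\dot{\tilde R}_\ell$, where $U,L$ are the index sets attaining the max and the min. The crux is a pointwise bound, available precisely because assumption~(2) makes each species' consumer unique: for a reactant $X_i$ of a maximal reaction $j$,
\begin{equation*}
\dot x_i=\gamma_i^TR=-\alpha_{ij}R_j+\sum_{j'\ne j}\beta_{ij'}R_{j'}\le \frac{R_j}{v_j}\Big(-\alpha_{ij}v_j+\sum_{j'\ne j}\beta_{ij'}v_{j'}\Big)=\frac{R_j}{v_j}\,\gamma_i^Tv=0,
\end{equation*}
where I used $\tilde R_{j'}\le\tilde R_j$ and $\Gamma v=0$. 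Hence $\dot{\tilde R}_j=\tfrac1{v_j}\sum_i\tfrac{\partial R_j}{\partial x_i}\dot x_i\le0$ for $j\in U$, and symmetrically $\dot{\tilde R}_\ell\ge0$ for $\ell\in L$, so $\dot V\le0$.

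For claim~(2) I would verify the LaSalle interior condition by showing that along any solution confined to $\ker\dot V$ the common-ancestor hypothesis forces $V\equiv0$. On such a solution $V$, $\max_j\tilde R_j$ and $\min_j\tilde R_j$ are each constant, and the inequalities above pin $\dot{\tilde R}_j=0$ for every maximal reaction and $\dot{\tilde R}_\ell=0$ for every minimal one; invoking the \emph{strict} monotonicity~A4 in the interior, $\dot{\tilde R}_j=0$ yields $\dot x_i=0$ for every reactant $X_i$ of $j$. Inspecting when equality holds in the displayed bound shows that every reaction producing such an $X_i$ must itself be maximal; iterating this upstream pins every ancestor of a maximal reaction to the maximum, and symmetrically every ancestor of a minimal reaction to the minimum. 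A reaction in $\mathscr A(\R_j)\cap\mathscr A(\R_\ell)$ is then simultaneously at the maximum and at the minimum, so $\max=\min$ and $V=0$, i.e. $R\in\ker\Gamma$ and $\dot x=0$; this is exactly what the iteratively defined set $\bar I_k$ of \S\ref{sec.lasalle} is designed to certify for C5$'$i. The main obstacle here is the rigorous propagation of the equalities $\dot{\tilde R}=0$ up the ancestor chains: it is clean at the level of the invariant trajectory, where the maxima and minima are genuinely constant, but requires care to match to the purely algebraic definition of $\bar I_k$.

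For claim~(3), conservativeness makes each $\mathscr C_{x_\circ}$ compact, so all trajectories are bounded. For persistence I would rule out boundary $\omega$-limits through the siphon machinery: by the characterization of invariant faces, any such limit would lie on a face associated with a critical siphon $P$; restricting to the corresponding critical subnetwork, which inherits the unique-consumer and positive-flux structure, and iterating would produce a critical deadlock, contradicting Corollary~\ref{cor.criticalSiphon} since the network lies in $\mathscr P$ by claim~(1). Hence $\omega(x_\circ)\cap\partial\mathbb R_+^n=\varnothing$. Persistence places the $\omega$-limit set in the interior, where~A4 is strict and the LaSalle interior condition of claim~(2) applies; boundedness together with the LaSalle condition then lets me invoke Corollary~\ref{cor1}, which upgrades an isolated equilibrium to the unique, globally asymptotically stable equilibrium of $\mathscr C_{x_\circ}$. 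I expect the persistence step---cleanly excluding critical siphons for this class---to be the most delicate part of claim~(3).
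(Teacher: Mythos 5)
Your treatment of claims (1) and (2) is essentially the paper's own proof: the same pairwise-difference matrix $C$, the same use of $\gamma_i^T v=0$ together with the unique-negative-entry assumption to bound the bracket $-\alpha_{ij}R_j+\sum_{j'\ne j}\beta_{ij'}R_{j'}$ by zero, and the same equality-case propagation up ancestor chains for LaSalle. (The paper extracts a fixed maximizing pair $(q^\star,s^\star)$ on an open time interval via the Baire-category step of Proposition \ref{th.lasalle}, where you instead argue that the max and min are individually constant; both routes are sound and lead to the same propagation argument.)

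Claim (3) contains a genuine gap, in exactly the step you flagged as delicate. Your persistence argument---``restricting to the corresponding critical subnetwork, which inherits the unique-consumer and positive-flux structure, and iterating would produce a critical deadlock''---fails on both counts. First, the critical subnetwork does \emph{not} inherit the positive-flux structure: it is obtained by deleting the columns of $\Gamma$ indexed by $\Lambda(P)$, and since $\ker\Gamma$ is one-dimensional and spanned by $v\gg 0$, deleting any nonempty set of columns leaves a matrix with trivial kernel (the only multiple of $v$ vanishing on some coordinate is $0$). Hence AG2 fails for the subnetwork, it is not a network family in the sense required, and Corollary \ref{cor.criticalSiphon} cannot be applied to it. Second, the iteration has no reason to terminate in a deadlock: after removing $\Lambda(P)$, the set $P$ becomes a siphon with \emph{no} output reactions at all in the subnetwork, which is the opposite of a deadlock, so the combinatorial recursion never manufactures the object you need. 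The paper's argument is dynamical rather than combinatorial: conservativeness makes the invariant face $\Psi_P$ associated with a putative critical siphon compact and convex, so Brouwer's fixed point theorem yields an equilibrium $x^*\in\Psi_P$; then $\dim\ker\Gamma=1$ forces $R(x^*)=tv$ with $t\ge 0$. The case $t>0$ is impossible because the (unique) output reaction of any species in $P$ has zero rate at $x^*$ while $tv\gg0$; the case $t=0$ makes every rate vanish, so the zero-coordinate set of $x^*$ is a critical deadlock, contradicting Corollary \ref{cor.criticalSiphon} since the network lies in $\mathscr P$ by claim (1).

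A secondary gap: claim (3) does not hypothesize the common-ancestor condition, so you cannot simply say ``the LaSalle interior condition of claim (2) applies.'' The paper closes this by observing that the existence of a conservation law implies the common-ancestor condition for this class of networks, and only then invokes claim (2) and Corollary \ref{cor1}. Your write-up should either prove that implication or add it as an explicit step; without it, the chain from persistence to global asymptotic stability is incomplete.
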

Theorem \ref{th.maxmin} can be extended to allow the addition of the reverse of certain reactions. Note that adding the reverse of an irreversible reaction increases the dimension of the kernel of $\Gamma$ so that the original result would not normally apply.

\begin{theorem}\label{th2} Consider the network $\mathscr N_\Gamma$ with the associated graph $(\mathbf V_S,\mathbf  V_R,\mathbf E,\mathbf W)$ that satisfies the conditions of Theorem \ref{th.maxmin}. Let $ \mathbf V_{R'} \subset \mathbf  V_R$
be the set of reactions $\R_j$ that satisfy: if $(\R_j,S_i) \in E $ and $(\R_k,S_i) \in E$ then $j=k$. Equivalently, $\R_j \in {\bf V}_{R'}$ if it is the only input reaction for all of its product species, i.e. the corresponding column in $\Gamma$ does not have more than one positive element.  Let $(\mathscr S, \tilde {\mathscr R})$ be the CRN constructed by adding reverse reactions for reactions belonging to $\mathbf V_{R'}$, and let $\tilde \Gamma$ be the new stoichiometry matrix. \\
Then, the claims of the previous theorem are satisfied for $\mathscr N_{\tilde \Gamma}$ with the following function:
\begin{align}\label{e.lyap_r}
    &V(x) = \max_{1\le j\le \nu} \tfrac 1{v_j}( R_j(x) -\chi_j R_{-j}(x)) -\\ \nonumber & \qquad \qquad\qquad\qquad \min_{1\le j\le \nu} \tfrac 1{v_j}( R_j(x) -\chi_j R_{-j}(x)),
\end{align}
where $\chi_j=1$ if ${\rm \bf R_j} \in \tilde {\mathscr R}\backslash {\mathscr R}$, and  $\chi_j=0$ otherwise.
\end{theorem}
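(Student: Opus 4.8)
The plan is to undo the augmentation by passing to \emph{net rates}. Write $\hat R_j(x) := R_j(x) - \chi_j R_{-j}(x)$, where $\chi_j = 1$ exactly when the reverse $\R_{-j}$ has been added, i.e. $\R_j \in \mathbf V_{R'}$. The column of $\tilde\Gamma$ associated to a reverse reaction $\R_{-j}$ is the negative of the $\R_j$ column, so the augmented dynamics collapse to $\dot x = \tilde\Gamma \tilde R(x) = \Gamma \hat R(x)$. Hence every trajectory of $\mathscr N_{\tilde\Gamma}$ obeys the \emph{original} $\Gamma$ — whose kernel is one-dimensional, spanned by $v\gg0$, and which has a unique negative entry per row by hypothesis~2 of Theorem~\ref{th.maxmin} — but driven by net rates rather than genuine monotone rates. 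Observe that \eqref{e.lyap_r} is precisely $V = \max_j v_j^{-1}\hat R_j - \min_j v_j^{-1}\hat R_j$, i.e. the function \eqref{e.maxmin} evaluated at net rates. First I would record the monotonicity inherited by $\hat R_j$: since the reactants of $\R_{-j}$ are the products of $\R_j$, assumption A3 gives $\partial \hat R_j/\partial x_i \ge 0$ for reactants of $\R_j$, $\partial \hat R_j/\partial x_i \le 0$ for products of $\R_j$, and $=0$ otherwise. Positive-definiteness (C2$'$) is then immediate: $V(x)=0$ iff all $v_j^{-1}\hat R_j(x)$ coincide, iff $\hat R(x)\in \mathrm{span}(v)=\ker\Gamma$, iff $\dot x = \Gamma\hat R(x)=0$, which matches $\ker C = \ker\Gamma$.

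The core is nonincreasingness (C4$'$), which I would verify through \eqref{e.Vdot1} by bounding $\dot V = \dot y_{j^+} - \dot y_{j^-}$, where $y_j := v_j^{-1}\hat R_j$ and $j^+,j^-$ are any maximizing/minimizing indices. Fix a maximizing index $j^+$. The key step is to sign $\dot x_i$ using $v\in\ker\Gamma$. For a reactant $X_i$ of $\R_{j^+}$ — for which $\R_{j^+}$ is, by hypothesis~2, the unique output reaction, hence the unique negative entry of row $i$ of $\Gamma$ — we have
\[
\dot x_i = -\alpha_{i j^+} v_{j^+} y_{j^+} + \sum_{\ell:\,\beta_{i\ell}>0}\beta_{i\ell} v_\ell y_\ell \le y_{j^+}\Big(-\alpha_{i j^+}v_{j^+}+\sum_{\ell:\,\beta_{i\ell}>0}\beta_{i\ell}v_\ell\Big)=0,
\]
the inequality using $y_\ell \le y_{j^+}$ together with $\beta_{i\ell}v_\ell \ge 0$, and the final equality being the $i$-th component of $\Gamma v = 0$. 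Symmetrically, for a product $X_i$ of a reversed reaction $\R_{j^+}$, the membership $\R_{j^+}\in\mathbf V_{R'}$ makes $\R_{j^+}$ the unique producer of $X_i$, so in net-rate form $\dot x_i = \beta_{i j^+}v_{j^+}y_{j^+} - \alpha_{i,o(i)}v_{o(i)}y_{o(i)} \ge 0$ (with $\R_{o(i)}$ the unique output of $X_i$), again by the same kernel identity.

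Feeding these signs into $\dot{\hat R}_{j^+} = \sum_i (\partial \hat R_{j^+}/\partial x_i)\,\dot x_i$ and using the inherited monotonicity — nonnegative derivative on reactants where $\dot x_i \le 0$, nonpositive derivative on products where $\dot x_i \ge 0$ — gives $\dot y_{j^+}\le 0$; the mirror computation at $j^-$ (all inequalities reversed, since $y_\ell \ge y_{j^-}$) gives $\dot y_{j^-}\ge 0$, whence $\dot V \le 0$. Since these bounds hold for \emph{every} maximizing and minimizing index, they cover the generalized derivative \eqref{e.Vdot1} on region boundaries, establishing claim~1 via Theorem~\ref{th.checkcvx}. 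Claims~2 and~3 I would then obtain by transcribing the corresponding arguments of Theorem~\ref{th.maxmin} to the net-rate dynamics $\dot x = \Gamma \hat R(x)$: the LaSalle interior condition reduces to propagating $\dot x = 0$ from $c_k^T\dot R = 0$ through the iterative sets $\bar I_k$ of \S\ref{sec.lasalle}, driven by the ancestor structure $\mathscr A(\cdot)$ now computed in the augmented graph, while persistence and uniqueness follow from conservativity via Proposition~\ref{th.lasalle} once the siphon bookkeeping is redone for $\mathscr N_{\tilde\Gamma}$.

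The main obstacle is exactly this bookkeeping forced by the reverse reactions. Concretely, one must confirm that the net-rate collapse preserves the ``unique negative entry per row'' structure of $\Gamma$: this is why only reactions in $\mathbf V_{R'}$ may be reversed, so that each reversed reaction remains the sole producer of its products and its reverse consumption is absorbed into a single net rate $\hat R_j$ rather than appearing as a second negative entry. One must also check that the added arrows create no new ancestor pairs or critical siphons that would violate the hypotheses of claims~2 and~3 in the augmented graph. Once that structural invariance is verified, the analytic estimates above are identical to those underlying Theorem~\ref{th.maxmin}.
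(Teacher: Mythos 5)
Your proposal is correct and takes essentially the same approach as the paper's proof: both pass to the net rates $R_j-\chi_j R_{-j}$, use the $\mathbf V_{R'}$ hypothesis to ensure each relevant row of $\Gamma$ has a single negative (resp.\ single positive) entry, and then rerun the kernel-identity/max--min comparison of Theorem \ref{th.maxmin}, with the LaSalle and persistence claims carried over by the observation that reversing reactions does not create new critical siphons. The only difference is organizational: you sign $\dot x_i$ first and then feed it into the chain rule, whereas the paper expands $\dot{\tilde R}_{q^\star}$ via the chain rule (equation \eqref{wideeq}) and signs each bracket, which is the same computation in a different order.
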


\section{Discussion and Examples}

\subsection{Relationship to Consensus Dynamics}
Consider a closed, i.e. without inflows or outflows, CRN for which there is a unique reactant for every reaction, and a unique output reaction for every species. In such network, the bipartite graph representing this network can be replaced with a digraph $G=(\mathbf V,\mathbf E,\mathbf W)$ representing the species, reactions and weights respectively. The stoichiometry matrix $\Gamma$ will be the negative transpose of the \emph{Laplacian} of the digraph. Hence, CRN can be described by the ODE:
\begin{equation}\label{e.linear}
  \dot x = - L^T R(x).
\end{equation}
If the graph is strongly connected, then $\mathbf 1$ is a conservation law, i.e, $\mathbf 1^TL^T=0$. Using Perron-Frobenius theory \cite{berman94}, $\ker L^T$ is spanned by a unique vector $v\gg 0$. Hence, \eqref{e.maxmin} is a Lyapunov function for the network family $\mathscr N_{-L^T}$ by Theorem \ref{th.maxmin}. Note that this is very similar to a consensus algorithm in a network of agents \cite{murray07} where they consider algorithm of the form: $\dot x = -L x$.  Indeed, we can derive from Theorem \ref{th2} the following result for consensus algorithms:

\begin{corollary}\label{cor.consensus} Consider a network of $n$ integrator agents with a strongly connected digraph $G=(\mathbf V,\mathbf E,\mathbf W)$, and let $L=[\ell_{ij}]$ be the associated Laplacian. Consider applying the following consensus algorithm:\vspace{0em}
\begin{equation}\label{e.consensus}
  \dot x = -L F(x),\vspace{0em}
\end{equation}
where $F=[F_1,..,F_n]^T$ is any function that satisfies: there exists $F^+,F^- \in \mathscr K_{-L}$ with $F=F^+-F^-$, and $F_j^- \equiv 0$ if there exists more than one positive element in the $j^{\text{th}}$-column of $-L$. Then, $F$-consensus is asymptotically reached for all initial states, i.e, $\lim_{t \to \infty} F(x_1(t)) = ..=\lim_{t \to \infty} F(x_n(t)) < \infty$.
\end{corollary}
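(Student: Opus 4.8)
The plan is to recognize the consensus system \eqref{e.consensus} as a special instance of a chemical reaction network and then invoke Theorem \ref{th2}. First I would translate the setup: given a strongly connected digraph $G$ with Laplacian $L$, I set $\Gamma = -L^T$ so that \eqref{e.consensus} becomes $\dot x = \Gamma F(x)$, matching the CRN dynamics \eqref{e.ode} once we verify that $F$ plays the role of an admissible rate vector. The decomposition $F = F^+ - F^-$ with $F^+, F^- \in \mathscr K_{-L}$ is exactly the device that lets us read the consensus law as a CRN in which each reaction may be split into a forward part (rate $F^+_j$) and a reverse part (rate $F^-_j$). The structural hypotheses of Theorem \ref{th.maxmin} must be checked on this underlying network: strong connectivity guarantees $\dim(\ker \Gamma)=1$ with a positive kernel generator $v \gg 0$ by Perron--Frobenius (as already noted in the paragraph preceding the corollary, $\ker L^T$ is spanned by a unique $v \gg 0$), and the consensus-from-a-single-reactant structure ensures every species has a unique output reaction, i.e.\ every row of $\Gamma$ has a unique negative entry.

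Next I would identify which reactions receive a reverse partner, matching the condition that $F^-_j \equiv 0$ unless column $j$ of $-L$ has a single positive element. In the language of Theorem \ref{th2}, the set $\mathbf V_{R'}$ consists precisely of those reactions whose product column in $\Gamma$ has at most one positive element; these are exactly the reactions for which adding a reverse reaction is permitted, and for which $\chi_j = 1$. The hypothesis ``$F_j^- \equiv 0$ if there exists more than one positive element in the $j$th column of $-L$'' is engineered to guarantee that reverse rates are only switched on for reactions in $\mathbf V_{R'}$, so that the augmented network $\mathscr N_{\tilde\Gamma}$ and the Lyapunov function \eqref{e.lyap_r} of Theorem \ref{th2} apply verbatim. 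I would then verify that $F^+, F^-$ satisfy the monotonicity assumptions A1--A4 implicitly, which is subsumed in the requirement $F^+, F^- \in \mathscr K_{-L}$.

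With the network cast in this form, Theorem \ref{th2} furnishes a PWLR Lyapunov function, yielding that the network is persistent when conservative and that isolated equilibria are globally asymptotically stable relative to $\mathscr C_{x_\circ}$. Here conservativeness is immediate because strong connectivity makes $\mathbf 1$ a conservation law ($\mathbf 1^T L^T = 0$), so every stoichiometric compatibility class is compact and trajectories are bounded. The equilibrium condition $\Gamma F(x) = 0$, i.e.\ $L^T F(x) = 0$, forces $F(x) \in \ker L^T = \mathrm{span}\{v\}$; but I would argue that the consensus state where all $F_j(x)$ coincide is the relevant interior equilibrium, and the Lyapunov function \eqref{e.lyap_r}, being the spread $\max_j \tfrac{1}{v_j}(\cdot) - \min_j \tfrac{1}{v_j}(\cdot)$ of the net rates, vanishes exactly on consensus. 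Convergence of $V$ to zero then gives $\lim_{t\to\infty} F(x_1(t)) = \cdots = \lim_{t\to\infty} F(x_n(t))$, finite by boundedness.

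The main obstacle I anticipate is the bookkeeping connecting the combinatorial hypothesis on the columns of $-L$ to the precise set $\mathbf V_{R'}$ and to the LaSalle interior condition (part 2 of Theorem \ref{th.maxmin}), namely the requirement $\mathscr A(\R_j) \cap \mathscr A(\R_\ell) \neq \varnothing$ for all $j, \ell$. Establishing this common-ancestor property from strong connectivity of $G$ — and confirming that splitting reactions into forward/reverse parts does not destroy it — is the step most likely to require care, since it is what upgrades mere stability to the asymptotic consensus conclusion.
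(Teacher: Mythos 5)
Your overall strategy---recast \eqref{e.consensus} as a reaction network and then invoke Theorems \ref{th.maxmin} and \ref{th2}---is exactly the paper's route (the paper gives no separate appendix proof; it simply derives the corollary from Theorem \ref{th2}). However, there is a concrete error at your first step: you set $\Gamma = -L^T$, so that your CRN has dynamics $\dot x = \Gamma F(x) = -L^T F(x)$, which is \emph{not} the system \eqref{e.consensus} unless the digraph is balanced. The hypothesis $F^+,F^-\in\mathscr K_{-L}$ in the statement already dictates the correct identification, namely $\Gamma = -L$. The facts you quote ($\Gamma=-L^T$, ``$\ker L^T$ is spanned by a unique $v\gg 0$'', ``$\mathbf 1^T L^T=0$'') belong to the preceding discussion of how a single-reactant CRN generates a digraph, not to the corollary's setting.

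The slip is not cosmetic, because it interchanges the roles of the two Perron--Frobenius objects and changes the conclusion your argument would deliver. With the correct $\Gamma=-L$: strong connectivity gives that $\ker(-L)$ is spanned by $\mathbf 1$, so in \eqref{e.maxmin}/\eqref{e.lyap_r} one takes $v=\mathbf 1$ and the Lyapunov function is the plain spread $\max_j F_j(x)-\min_j F_j(x)$, whose kernel is exactly the $F$-consensus set; meanwhile the left Perron vector $d\gg0$ with $d^TL=0$ supplies the conservation law that makes the network conservative, hence classes compact and trajectories bounded. In your setup these are swapped: $\mathbf 1$ becomes the conservation law and the Perron vector becomes the kernel generator $v$, so the function \eqref{e.lyap_r} vanishes on $\{F_j = c\,v_j\}$ and your argument, pushed through, proves Perron-\emph{weighted} consensus ($F_j/v_j$ converging to a common value), not the claimed $F$-consensus. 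Once $\Gamma=-L$ is used, the rest of your verification is essentially right and matches the intended derivation: each row of $-L$ has its unique negative entry on the diagonal (condition 2 of Theorem \ref{th.maxmin}); the columns of $-L$ with at most one positive entry are exactly $\mathbf V_{R'}$, so the hypothesis on $F^-$ switches reverse rates on only where Theorem \ref{th2} permits; and the common-ancestor condition you flag as delicate is in fact immediate, since in a strongly connected digraph every reaction is an ancestor of every other, so all ancestor sets equal $\mathbf V_R$. One further point you should not declare ``subsumed'': $F^-\in\mathscr K_{-L}$ makes $F_j^-$ a monotone function of $x_j$, whereas the reverse-reaction rate in the augmented network of Theorem \ref{th2} is a kinetics in $\mathscr K_{\tilde\Gamma}$ and therefore depends on the \emph{product} species of $\R_j$; reconciling these two dependence structures is a genuine step, not an automatic consequence of the hypotheses.
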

\begin{remark}
Note that a mix-min type Lyapunov function \eqref{e.maxmin} has been already used for linear consensus algorithms \cite{moreau04}. Therefore, Corollary \ref{cor.consensus} generalize the results of \cite{moreau04,murray07} to nonlinear consensus algorithms. It is worth noting that the dynamics of a detailed balanced network has also been linked with consensus dynamics \cite[\S 4.4]{schaft_arxiv}.\vspace{-1em}
\end{remark}
\subsection{Illustrative Examples}
We present several examples to illustrate the results:
\begin{enumerate}
\item Consider the network (1),(2) introduced in the introduction. As indicated before, this network does not satisfy the conditions of \cite{feinberg95} as it has deficiency 1, and violates the conditions of \cite{maeda78,angeli10}. Hence, its stability can not be established by methods in the literature.\\
           The network has two conservation laws. Thus, the stoichiometric class is a two dimensional polytope of the form $\mathscr C_{x_\circ}=\{x \in \mathbb R^4 | x_1+x_3=M_1, x_1+x_2+2x_4=M_2, 0 \le x_1 \le M_1, 0 \le x_1+x_2 \le M_2 \}$, where $M_1=x_{\circ 1} +x_{\circ 3} , M_2 =x_{\circ 1} +x_{\circ 2}+2x_{\circ 4} $ are the conserved quantities.

    In order to apply Theorem \ref{th.fixedH}, let us choose $H=\Gamma$. Then, there are six non-empty-interior partition regions of the reaction space $\mathbb R^3$, which are: \begin{align*}\cW_1&=\{ r | -h_1^T r \le 0, -h_2^T r \le 0, h_3^T r \le 0, h_4^T r \le 0\}, \\
      \cW_2&=\{ r | -h_1^T r \le 0, h_2^T r \le 0, h_3^T r \le 0, -h_4^T r \le 0\}, \\
       \cW_3&=\{ r | -h_1^T r \le 0, h_2^T r \le 0, h_3^T r \le 0, h_4^T r \le 0\},\\ \cW_4 &=-\cW_3, \cW_5=-\cW_2, \cW_6=-\cW_1, \end{align*}
       where $h_1=[-1 , 0 , 1 ]^T, h_2=[1 , -\! 2  , 1 ]^T, h_3=[1, 0 , -1 ]^T, h_4=[0 , 1 ,-\! 1 ]^T$. \\ We need to find the coefficients $c_1,..,c_6$, where $c_4=-c_3, c_5=-c_2, c_6=-c_1$.
        Although we have twelve neighboring pairs, only three constraints are needed, because of the symmetries involved, which are $c_3-c_1=\eta_{31} h_2, c_3-c_2=\eta_{32} h_4, c_2+c_1=-\eta_{21}h_1$. The sign-constraints vectors are $b_1=[ 1,1,-\!1]^T,b_2=[ 1,-\!1 , 0]^T, b_3=[1,-\!1,-\! 1]^T,b_4=-  b_3, b_5=-b_2, b_6=-b_1$. Hence, the linear program can be solved and one of its solutions is $V(x)=\tV(R(x))$, where $\tV$ is:
        \begin{equation*}\label{e.LP_lyap}
          \tV(r)\!=\!\max\{ |r_1+3 r_2 -4r_3|, 3|r_1-r_2|, |3r_1 - r_2 - 2r_3 | \}.
        \end{equation*}

        Alternatively, applying Algorithm 1 with the standard setting yields a PWLR Lyapunov function given by:
        \begin{align*} &\tV(r)= \\ &\max\{|r_1-r_3|, \! |r_1-2r_2+r_3|, \! 2|r_2-r_3|, \! 2|r_2-r_1| \}. \end{align*}

        Finally, Theorem \ref{th.maxmin} gives \eqref{e.lyap_eg}. Therefore, our three constructions were successful and have produced three different functions. It can be verified that the LaSalle's condition is fulfilled. Since the network is conservative and injective relative the stoichiometric class \cite{banaji07}  there exists a unique equilibrium in each stoichiometric compatibility class. Therefore, Corollary \ref{cor1} implies that the unique equilibrium is globally asymptotically stable.  In order to illustrate the dynamics, we consider the stoichiometric class corresponding to $M_1=8, M_2=7$. Figure \ref{fig.phase_portrait} depicts the level sets of Lyapunov function \eqref{e.lyap_eg} and the phase portrait with 
        2$^{\text{nd}}$-order Hill kinetics which are given by: $R(x)=[k_1 x_1^2/(1+x_1^2), k_2 x_2^4/(1+x_2^2)^2, k_3 x_3^2 x_4^2/( (1+x_3^2)(1+x_4^2)) ]^T$ where the rate constants are $k=[1,0.5,0.25]$.

\begin{figure}
  \centering
    \includegraphics[width=\columnwidth]{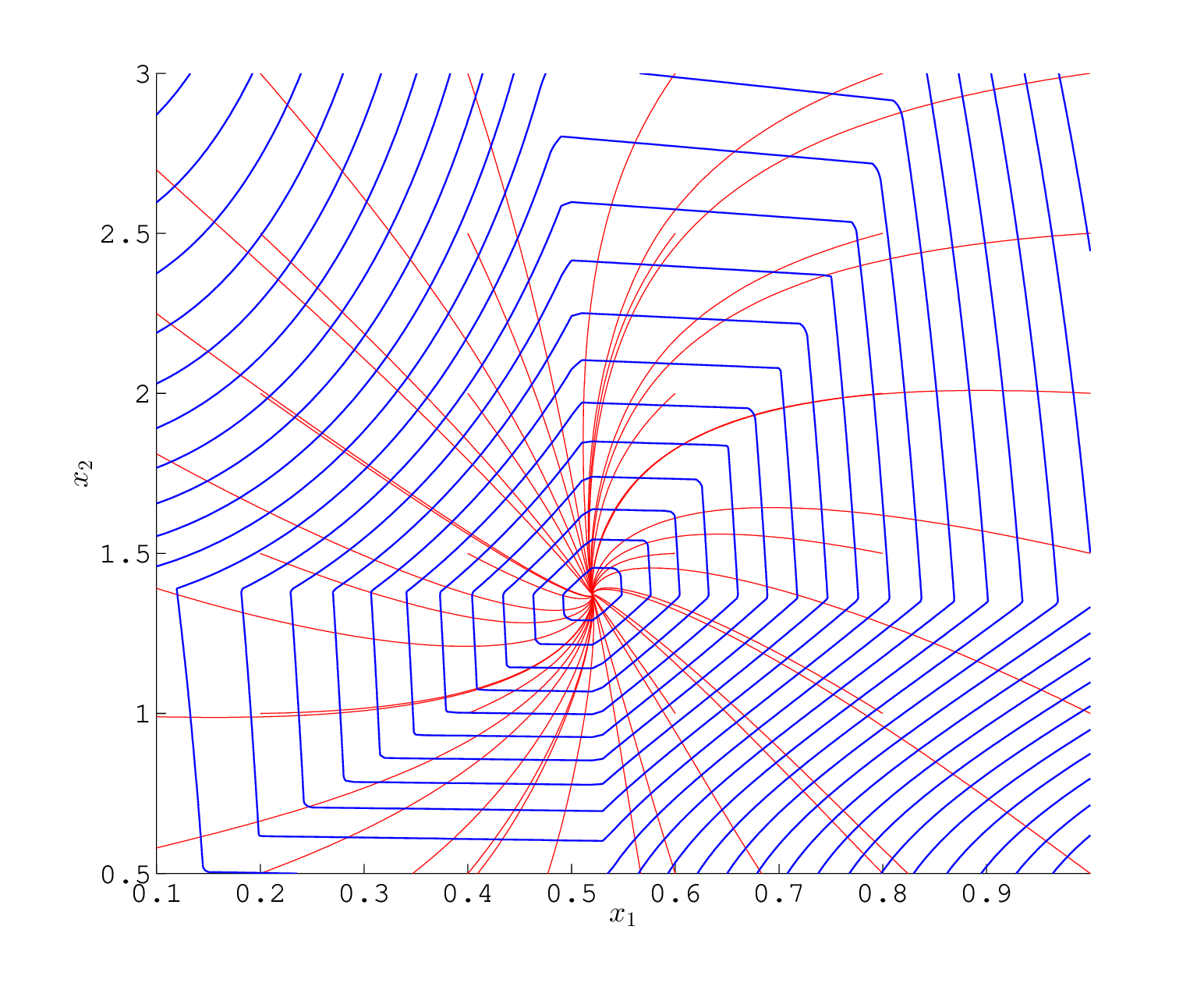}
  \caption{Lyapunov function level sets and phase portrait with
   2$^{\text{nd}}$-order Hill kinetics.}\label{fig.phase_portrait}
\end{figure}

 \item Consider the following network given in \cite{feinberg95}:\vspace{0em}
\begin{equation*}
X_1 \leftrightharpoons 2X_2, \
X_1+X_3   \leftrightharpoons X_3 \lra X_2+X_5 \lra X_1+X_3  \vspace{0em}     \end{equation*}
The network violates both necessary conditions given by Theorems \ref{th.p0}, \ref{th.necessary}, therefore it does not admit a PWLR Lyapunov function. However, the deficiency-zero theorem \cite{feinberg95} can be applied with Mass-Action kinetics to show that the interior equilibrium is asymptotically stable despite the existence of boundary equilibria, a situation which is not allowed by Theorem \ref{th.p0}.

\item We gave an example of a zero deficiency network which is not $\mathscr P$. Now, consider the following CRN for a given integer $n\ge 1$:
    \begin{equation*}\!\!\!
    \begin{array}{ll}
      X_1+E_1 \lra E_1X_1 \lra  X_2+E_1,& \hspace{-0.8cm} X_2 \lra X_1 \\
      X_2+E_2 \lra E_2X_2 \lra  X_3+E_2,& \hspace{-0.8cm}  X_3 \lra X_2 \\
      \vdots \\
       X_n+E_n \lra E_nX_n \lra  X_{n+1}+E_n,& \!\!\!\!\!  X_{n+1} \lra X_n,
       \end{array}
    \end{equation*}
which has deficiency $n$. For every $n$, a PWLR Lyapunov function is given by $V(x)=\|D \dot x\|_1$ where $D=\diag[I_{2n+1},O_{n}]$, with species ordered as $X_1,..,E_1X_1,..,E_1,..,E_n$. This shows that there is no simple relationship between our results and the notion of deficiency.

\item The following CRN illustrates the fact that the mere existence of the PWLR Lyapunov function does not guarantee the boundedness of the trajectories: \vspace{0em}
\begin{equation*}
     X_3   \mathop{\lra}^{k_1} X_1, \  0 \mathop{\lra}^{k_2} X_2, X_1+X_2 \mathop{\lra}^{k_3} X_3,\vspace{0em}
        \end{equation*}
        The three constructions presented yield a Lyapunov function, in particular \eqref{e.maxmin} is a valid one. However, consider the network with Mass-Action Kinetics, and let $A=x_1(0)+x_3(0)$ be the parameter corresponding to the stoichiometric compatibility class. If $A>\tfrac{k_2}{k_3}$, then the system trajectories are bounded and the unique equilibrium $\left ( \frac{k_2k_3}{ k_3 A - k_2}, A- \frac {k_2}{k_3}, \frac {k_2}{k_3} \right)$ is globally asymptotically stable by Theorem \ref{th.lyap}. However, when $A \le \tfrac{k_2}{k_3}$, there are no equilibria in the nonnegative orthant, solutions are unbounded and approach the boundary.

        \item Consider the following network: \vspace{0em}
          \begin{equation*}
     X_1   \mathop{\lra}^{k_1} X_2,  \ X_5 \mathop{\lra}^{k_4} X_4, \
     X_2 + X_4   \mathop{\lra}^{k_2} X_3 \mathop{\lra}^{k_3} X_1+X_5 \vspace{0em}
        \end{equation*}
        The linear program in Theorem \ref{th.fixedH} with $H=\Gamma$ is infeasible, however, Theorem \ref{th.checkcont} and Theorem \ref{th.maxmin} give rise to the PWLR function \eqref{e.maxmin} with $v=\mathbf 1$. Close examination indicates a partitioning matrix $\hat H=[1 \, 0 \, 0 \,  -\!\!1 ] $ renders the linear program feasible.

        \item Consider the following network:\vspace{0em}
         \begin{equation*}
     2X_1+3X_3  \mathop{\lra}^{k_1} 0    \mathop{\lra}^{k_3} 3X_1+X_2+2X_3, \,
     X_1 + X_2  \mathop{\lra}^{k_2} X_3 \vspace{0em}
        \end{equation*}
        Theorem \ref{th.maxmin} does not apply. Algorithm 1 with standard setting does not terminate. However, Theorem \ref{th.fixedH} with $H=\Gamma$ gives the following convex PWLR Lyapunov function:
       $ V(x)= \max \{ |6R_1(x)+R_2(x) - 7 R_3(x)|,   |3R_2(x)-3R_3(x)|, |6R_1(x)- 6R_3(x) | \}.$ \vspace{0em}
\end{enumerate}
\subsection{Biochemical Example}
Within the class of structurally persistent, i.e. critical-siphon-free, CRNs which have a $P_0$ Jacobian matrix, our proposed algorithms were reasonably successful. As an example, consider the following CRN which represents a double futile cycle with distinct enzymes \cite{angeli08}:
\begin{align*}
    X_0 +E_0   \mathop{\leftrightharpoons}^{k_1}_{k_{-1}} E_0X_0 \overset{k_2}{\lra} X_1+E_0, \\
    X_1+E_1  \mathop{\leftrightharpoons}^{k_3}_{k_{-3}} E_1X_1\overset{k_4}{\lra} X_0 + E_1,  \\ \nonumber
      X_1 +F_0  \mathop{\leftrightharpoons}^{k_5}_{k_{-5}} F_0X_1 \overset{k_6}{\lra} X_2+F_0, \\
    X_2+F_1   \mathop{\leftrightharpoons}^{k_7}_{k_{-7}} F_1X_2\overset{k_8}{\lra} X_1 + F_1,
        \end{align*}
        where the associated graph is depicted in Figure \ref{f.d_futile}.

The network is conservative with five conservation laws, hence the stoichiometric space is a 6-dimensional compact polyhedron.

Both Theorems \ref{th.fixedH}, \ref{th_alg} are applicable.
        For example, a valid PWLR Lyapunov function constructed can be represented as:
       $V(x) = \| \diag(\xi) \dot x \|_1, $
       where $\xi=[2 \, 2 \, 2 \, 1 \, 1 \, 1 \, 1 \, 1 \, 1 \, 1 \, 1 ]$ and species are ordered as $X_0, X_1, X_2, \ldots, F_1 X_2$.
The network is injective by the work of \cite{banaji07}, hence it can not have more than a single equilibrium state in the interior of each stoichiometric class. Furthermore, it has deficiency 2, hence the zero-deficiency theorem will not apply. Also, the results of \cite{angeli10} can not be applied since $X_1$ is adjacent to more than two reactions. However, Theorem \ref{th.lyap} implies that a Lyapunov function exists and that the unique equilibrium is globally asymptotically stable. Figure \ref{f.biochemical} depicts a sample trajectory with Michaelis-Menten kinetics of the form: $ R_j(x)= k_j \prod_i (x_i/(a_{ij}+x_i))^{\alpha_{ij}}, $  with $a_{ij}=1$, and kinetic constants $k$=[33.2, 83.97, 37.17, 82.82, 17.65, 12.95, 87.99, 4.41, 68.67, 73.38, 43.72, 37.98] and initial condition $x_\circ$=[5.88, 8.78,  4.69, 4.37, 7.46, 4.68, 8.61, 4.67, 4.98, 4.87, 2.29].

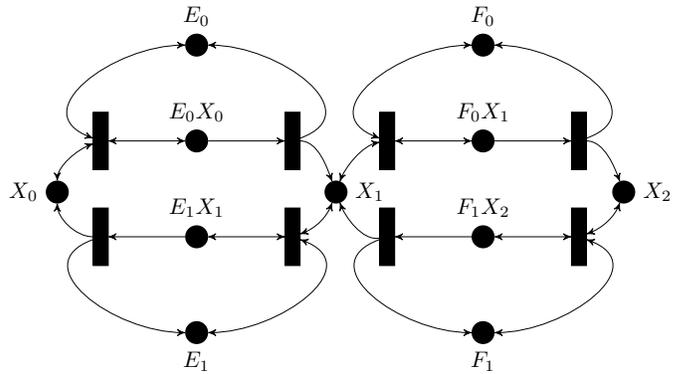
\begin{figure}
\centering
\scalebox{0.85}{
\centering
\begin{tikzpicture}[node distance=1.5cm,>=stealth',bend angle=45,auto]
  \tikzstyle{S}=[circle,thick,draw=black,fill=black,minimum size=2.5mm]
  \tikzstyle{R}=[rectangle,very thin,draw=black,
  			  fill=black,minimum width=.01in, minimum height=9mm]
\begin{scope}
   \node [S] (X1) [label=above:$E_0X_0$]   {};
    \node [R] (R1) [right of=X1] {}
      edge [<-]                  (X1);
    \node [R] (R2) [left of=X1] {}
      edge [<->]                  (X1);
        \node [S] (X3) [below of=X1, label=above:$E_1X_1$] {};
    \node [R] (R3) [right of=X3] {}
      edge [<->]                  (X3);
    \node [R] (R4) [left of=X3] {}
      edge [<-]                  (X3);
       \node [S] (X2) [below=0.8cm, right=2cm, label=right:$X_1$]   {};
        \node [S] (X4) [below=0.8cm, left=2cm, label=left:$X_0$]   {};

         \node [S] (X5) [above of=X1, label=above:$E_0$]   {};
    \node [S] (X6) [below of=X3, label=below:$E_1$]   {};

    \node[S](X7)[right=4.3cm,label=above:$F_0X_1$]{};
     \node [R] (R5) [right of=X7] {}
      edge [<-]                  (X7);
    \node [R] (R6) [left of=X7] {}
      edge [<->]                  (X7);

            \node [S] (X9) [below of=X7, label=above:$F_1X_2$] {};
    \node [R] (R7) [right of=X9] {}
      edge [<->]                  (X9);
    \node [R] (R8) [left of=X9] {}
      edge [<-]                  (X9);
            \node [S] (X10) [above of=X7, label=above:$F_0$]   {};
    \node [S] (X11) [below of=X9, label=below:$F_1$]   {};
      \node [S] (X8) [below=0.8cm, right=6.5cm,, label=right:$X_2$]   {};

     \draw[->] (R1) .. controls +(0:0.4cm) and +(110:0.4cm) .. (X2);
     \draw[<->] (X2) .. controls +(-110:0.4cm) and +(20:0.4cm) .. (R3);
      \draw[->] (R4) .. controls +(-180:0.4cm) and +(-90:0.4cm) .. (X4);
   \draw[<->] (X4) .. controls +(+90:0.4cm) and +(-160:0.4cm) .. (R2);
      \draw[->] (R1) .. controls +(20:1.3cm) and +(0:1.3cm) .. (X5);
      \draw[<->] (X5) .. controls +(-180:1.3cm) and +(160:1.3cm) .. (R2);
         \draw[->] (R4) .. controls +(200:1.3cm) and +(180:1.3cm) .. (X6);
      \draw[<->] (X6) .. controls +(0:1.3cm) and +(-20:1.3cm) .. (R3);

          \draw[->] (R8) .. controls +(-170:0.4cm) and +(-70:0.4cm) .. (X2);
          \draw[<->] (X2) .. controls +(70:0.4cm) and +(-170:0.4cm) .. (R6);
          \draw[->] (R5) .. controls +(20:1.3cm) and +(0:1.3cm) .. (X10);
      \draw[<->] (X10) .. controls +(-180:1.3cm) and +(160:1.3cm) .. (R6);
              \draw[->] (R8) .. controls +(200:1.3cm) and +(180:1.3cm) .. (X11);
      \draw[<->] (X11) .. controls +(0:1.3cm) and +(-20:1.3cm) .. (R7);
          \draw[->] (R5) .. controls +(0:0.4cm) and +(110:0.4cm) .. (X8);
     \draw[<->] (X8) .. controls +(-110:0.4cm) and +(20:0.4cm) .. (R7);
   \end{scope}
\end{tikzpicture}}
\caption{Double Futile Cycle with distinct enzymes.}\label{f.d_futile}
\end{figure}

\begin{figure}
  \centering
  \includegraphics[width=\columnwidth]{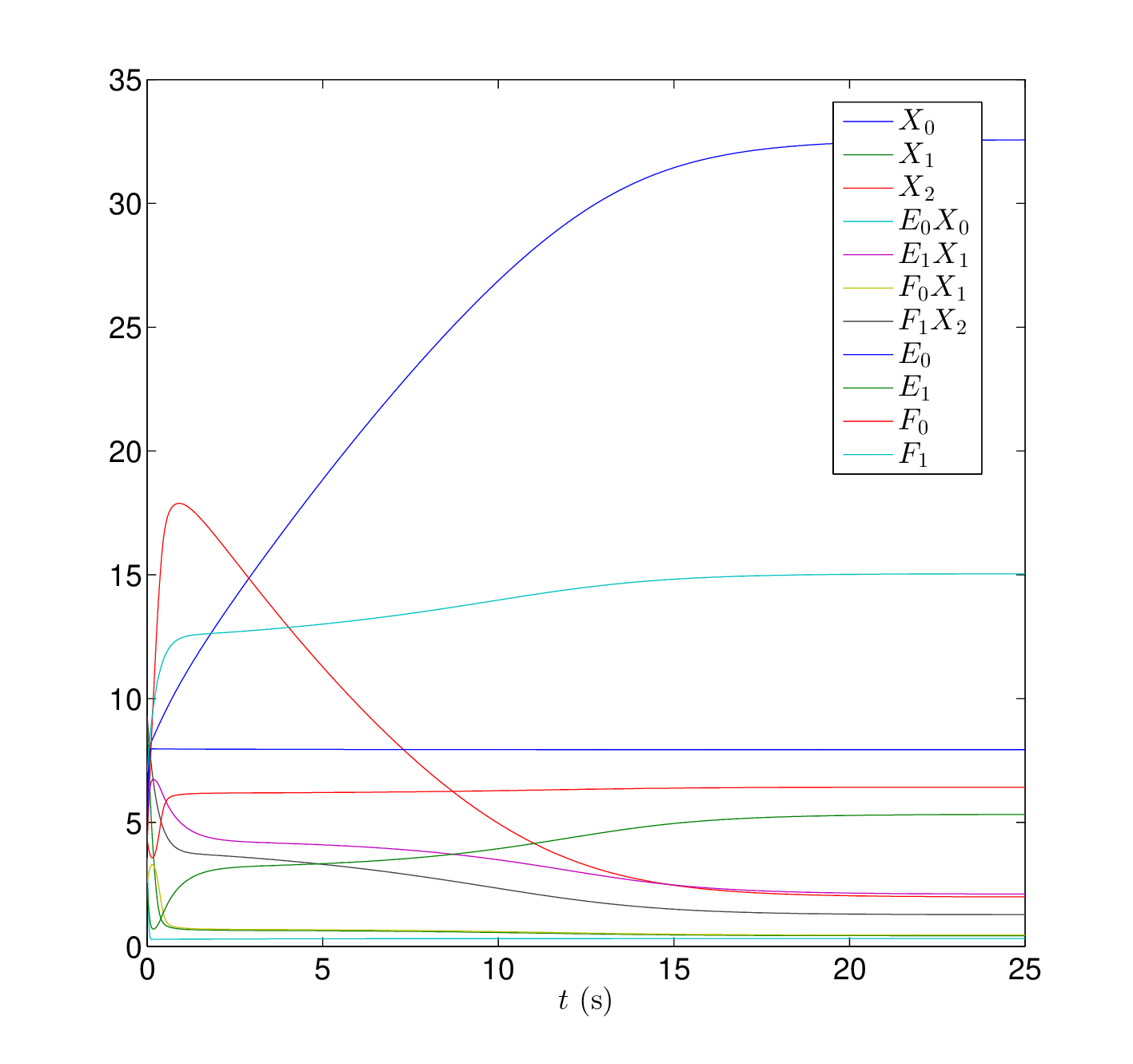}\\
  \caption{Sample trajectories for the biochemical example with Michaelis-Menten kinetics.}\label{f.biochemical}
\end{figure}

\section{Conclusions}
 A new type of Lyapunov functions have been introduced for a wide class of CRNs. The Lyapunov functions are piecewise linear and possibly convex in terms of monotone reaction rates. We have provided methods for checking candidate PWLR Lyapunov functions. Several theorems were introduced for their construction.

Concerning potential future directions, we mention few. First, further properties of the class of $\mathscr P$-networks, specifically the persistence property, are currently being investigated. Second, a more general class of robust Lyapunov functions, of which PWLR functions are a special case, are being studied. Third, the potential use of PWLR Lyapunov functions as control Lyapunov functions is being researched. 

\section*{Appendix: Proofs}
  \addcontentsline{toc}{section}{Appendix: Proofs}
\paragraph*{Proof of Proposition \ref{prop1}}
a) By construction, we have  $\mathbb R^\nu = \bigcup_{k=1}^{2^p} \cW_k $. To prove the claim it is sufficient to show that if $\cW_k^\circ = \varnothing$, then $\exists j \in \{1,..,m\}$ such that $\cW_k \subset \cW_j$. By definition, $\cW_k= \bigcap_{i=1}^p \{r | \sigma_{ki} h_i^T r \ge 0\}$. We construct the set $\cW_j$ as follows: Let $\cW_j^{(1)}=\{r| \sigma_{k1} h_1^T r \ge 0\}$ which has nonempty interior. Let $i>1$, we set $\cW_j^{(i)} = \cW_j^{(i-1)} \cap \{r| \sigma_{ki} h_i^T r \ge 0\}$ if it has nonempty interior.
Otherwise, $\cW_j^{(i-1)} \cap \{r| \sigma_{ki} h_i^T r \geq 0 \} = \cW_j^{(i-1)} \cap \{r| \sigma_{ki} h_i^T r = 0 \} \subseteq  \cW_j^{(i-1)} \cap \{r| \sigma_{ki} h_i^T r\leq 0 \}:= \cW_j^{(i)}$ and the latter will have
 nonempty interior. Therefore, $\cW_j=\cW_j^{(p)}$ will have nonempty interior and will include $\cW_k$. Furthermore, it is evident from the definitions that $\bigcap_{k=1}^m \cW_k=\ker H$, $\cW_j \cap \cW_k = \partial \cW_k \cap \partial\cW_j$, $j \neq k \in \{1,..,2^p\}$. \\
b) Let $ r^* \in \cW_k^\circ$ and let $ \mu \in \ker H, \mu \gg 0$. Then there exists $t>0$ sufficiently large such that $(r^*+t \mu) \in \mathbb R_+^\nu$. Furthermore, $ H(r^*+t \mu) = Hr^*$, hence $ (r^*+t \mu) \in \cW_k^\circ$.\\
c) Assume that $r^* \in \cW_k^\circ$, then $-r^* \in -\cW_k^\circ$, which implies that the latter is nonempty and there exists $j \in \{1,..,m\}$ such that $\cW_j=-\cW_k$.
\hspace{\fill} $\blacksquare$

\paragraph*{Proof of Theorem \ref{th.lyap}}

Take the Dini derivative along solutions of (\ref{e.ode}) to get
\begin{align*}
\label{dini1} \nonumber
 D^+ V(x(t))  &:= \limsup_{h \rightarrow 0^+} \frac{V(x(t+h))- V(x(t))}{h} \\ & =   \limsup_{h \rightarrow 0^+}  \frac{\tilde V ( R(x(t+h)) )- \tilde V (R(x(t)))}{h} \\
&\mathop{=}^{(\star)} \limsup_{h \rightarrow 0^+}  \frac{\tilde V ( R( x(t) )+h \dot{R}(x(t)) )  ) - \tilde V (R(x(t)))}{h} \\ & \mathop{\le}^{(*)} \max_{k \in K_{x(t)}} c^{T}_k \dot{R} (x(t)) = \dot{V} (x (t) )     ,
\end{align*}
where $\dot R(x)= \frac{\partial R(x)}{\partial x} \Gamma R(x) $, and the equality $(\star)$ follows from Taylor's expansion of $R(x(t))$ with respect to time 
and considering that $\tilde V(r)$ is a Lipschitz function. Furthermore, by Danskin's Theorem \cite{clarke97}, $(*)$ will be an equality if $\tilde V$ was convex.   \\
Therefore, the claims of Theorem \ref{th.lyap} follow from the Lyapunov Second's Method and Krasovskii-LaSalle's principle \cite{yoshizawa} since by assumption, the solution $x(t)$ is bounded, so the $\omega$-limit set $\omega(x(0))$ is non-empty and compact, hence a subset of $\ker \dot V$. \strut\hspace{\fill} $\blacksquare$

\paragraph*{Proof of Corollary \ref{cor1}}
Since $x^\star$ is isolated, then it is asymptotically stable as well as $E_{x_\circ} \backslash \{x^\star\}$. Let $\mathcal B_{x^*}$ be its basin of attraction, and let $\mathcal B_c$ be the basin of attraction of $E_{x_\circ} \backslash \{x^\star\}$. The standard arguments can be replicated to show that $\mathcal B_{x^*}$ and $E_{x_\circ} \backslash \{x^\star\}$ are open relative to $\mathscr C_{x_\circ}$ \cite{hahn67}. However, as all trajectories starting in $\mathscr C_{x_\circ}^\circ$ converge to the equilibrium set, this implies that $\mathscr C_{x_\circ}= \mathcal B_{x^*} \cup \mathcal B_c$. Hence, the connected open set $\mathscr C_{x_0} \cap \mathbb R_+^n$ is a union of two disjoint open sets; a contradiction. Therefore, $E_{x_\circ}=\{x^*\}$ and $\mathcal B_{x^*}=\mathscr C_{x_\circ}^\circ$. \strut\hspace{\fill} $\blacksquare$


\paragraph*{Proof of Theorem \ref{th.checkcont}:} We show that each condition is equivalent to a required property for the Lyapunov function:

\emph{C1: Nonnegativity:} The inequality $V(x)\ge 0$ holds if and only if $c_k^T r\ge 0$ whenever $\Sigma_k H r\ge 0, r\ge 0$. By the Farkas Lemma \cite{rockafellar}, this is equivalent to the existence of $\xi_k,e_k \in \mathbb R^\nu, [\xi_k^T,e_k^T] > 0, k=1,..,m/2$ so that $c_k = \xi_k ^T \Sigma_k H + e_k^T.$
We show that $e_k=0$. Note that by symmetry we have $-c_k=c_{-k}= -\xi_{-k} ^T \Sigma_k H + e_{-k}^T$ with $[\xi_{-k}^T,e_{-k}^T] > 0$. Since there exists $v \in \ker \Gamma=\ker H$ with $v \gg 0$, we have $c_k^T v = e_{k}^T v \ge 0$, and     $c_k^T v =  -e_{-k}^T v \le 0$. This implies $e_k=0$.

\emph{C2: Positive-Definiteness:} Let $R(x) \in \ker C$ be arbitrary, we see that $V(x) =0$ and therefore, by definition of PWLR Lyapunov function, $R(x) \in \ker \Gamma$. Thus, $\ker C \subset \ker \Gamma$. \\
 To show the converse direction, note that C1 implies that $\ker H \subset \ker C$. However, we assumed that $\ker H=\ker \Gamma$ and we have shown in above that $ \ker C \subset \ker \Gamma$ . Hence, $\ker C=\ker \Gamma$. Hence, the statement ``$V(x)=0$ iff $x$ is an equilibrium'' is equivalent to $\ker C=\ker \Gamma$.

\emph{C3: Continuity:} 
Suppose $\tilde V$ is continuous, and let $(k,j) \in \mathcal N$, i.e. $\cW_k,\cW_j$ are neighboring regions. Then $c_k^T r= c_j^T r$ whenever $r \in \partial \cW_k \cap \partial \cW_j=\{r | h_{s_{kj}}^T r=0\}$. Hence, $(c_k-c_j) \in \im(h_{s_{kj}})$, which implies \eqref{e.continuety}.

Assume now that the converse is true. We need to show that $c_k^T r= c_j^T r$ whenever $r \in \partial \cW_k \cap \partial \cW_j$. The statement is true when $d_r(\cW_k,\cW_j)=1$ by \eqref{e.continuety}. Thus, we show it when $d_r(\cW_k,\cW_j)>1$. We need to introduce the following lemma:
\begin{lemma} \label{lem.cont_induction}
 Let $\tilde H$, and $\{\cW_k\}_{k=1}^m$ be as above. Assume that $d_r(\cW_k,\cW_j)=N>1$, then $\exists \cW_\ell, 1\le \ell \le m, \ell \ne k,j$ such that $d_r(\cW_k,\cW_\ell)+d_r(\cW_\ell,\cW_j)=N$.
\end{lemma}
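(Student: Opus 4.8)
The plan is to exploit the geometric meaning of the Hamming distance $d_r$: it counts the coordinates on which the reduced signatures $\tilde\Sigma_k$ and $\tilde\Sigma_j$ disagree, i.e. the sign disagreements of $\tilde H r$ between the two regions. The intermediate cone $\cW_\ell$ will be produced constructively by walking along a line segment joining interior points of $\cW_k$ and $\cW_j$. First I would pick $r_k \in \cW_k^\circ$ and $r_j \in \cW_j^\circ$ and consider the affine path $r(t) = (1-t)r_k + t r_j$ for $t \in [0,1]$. For each reduced row $h_i^T$, the scalar $g_i(t) := h_i^T r(t)$ is affine in $t$, and its behaviour on $[0,1]$ is completely governed by monotonicity of affine functions on an interval.

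The key observation is a dichotomy on the rows. For the coordinates where $\tilde\Sigma_k$ and $\tilde\Sigma_j$ agree, the quantity $\sigma_{ki} g_i(t)$ is strictly positive at both endpoints (interior membership gives strict inequalities), hence strictly positive on all of $[0,1]$; these signs never flip. For each of the $N$ coordinates in the disagreement set $D$ we have $\sigma_{ki} = -\sigma_{ji}$, so $\sigma_{ki} g_i(t)$ runs strictly from positive to negative and crosses zero exactly once, at some time $t_i \in (0,1)$. I would then arrange the crossing times to be pairwise distinct: since $t_i = h_i^T r_k / (h_i^T r_k - h_i^T r_j)$ depends rationally on the endpoints and the interiors $\cW_k^\circ,\cW_j^\circ$ are open, distinctness holds for a generic choice of $(r_k,r_j)$ and may be assumed after an arbitrarily small perturbation keeping both points in their cones. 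Ordering the crossings $0 < t_{i_1} < \dots < t_{i_N} < 1$ and choosing any $t^\ast \in (t_{i_1}, t_{i_2})$, the point $r(t^\ast)$ makes every defining inequality of the signature $\tilde\Sigma_\ell$ strict, where $\tilde\Sigma_\ell$ agrees with $\tilde\Sigma_k$ in all coordinates except $i_1$, which has flipped. Strictness of all inequalities forces $\cW_\ell^\circ \ne \varnothing$, so $\ell \in \{1,\dots,m\}$; and by construction $d_r(\cW_k,\cW_\ell) = 1$ while $d_r(\cW_\ell,\cW_j) = N-1$, so the two distances add to $N$, with $\ell \ne k,j$ since $N \ge 2$.

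The main obstacle is the degenerate situation where several crossing times coincide, so the first crossing flips more than one sign simultaneously; in the extreme case where all $N$ flip at once no strict intermediate cone would emerge, and $\ell$ could collapse to $j$. This is precisely what the genericity step rules out, and I expect the only delicate point to be confirming that the perturbation separating the finitely many crossing values can be made without pushing $r_k$ or $r_j$ out of their open cones — which follows because each interior is open and only finitely many equalities $t_i = t_{i'}$ must be avoided. An alternative route avoiding perturbation is to observe that even a simultaneous first crossing of $q$ signs yields $d_r(\cW_k,\cW_\ell) = q$ and $d_r(\cW_\ell,\cW_j) = N-q$ with $1 \le q \le N-1$ whenever the crossing is not total, and totality is nongeneric; either route supplies the intermediate region needed to run the induction on $d_r$ in the continuity half of Theorem \ref{th.checkcont}.
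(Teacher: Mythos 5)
Your proof is correct, but it follows a genuinely different route from the paper's. The paper proceeds by induction on $N$: in the base case $N=2$ it assumes both one-flip signatures yield empty-interior cones and derives, via the Farkas Lemma, that the two disagreement rows of $\tilde H$ must be linearly dependent --- contradicting the construction of $\tilde H$ --- and the inductive step then attaches one more half-space to a region supplied by the hypothesis for $N-1$. You instead build the intermediate cone constructively: along the segment joining interior points of $\cW_k$ and $\cW_j$, the agreeing rows keep strict sign (affine functions positive at both ends), each of the $N$ disagreeing rows crosses zero exactly once in $(0,1)$, and a point just past the first crossing has all inequalities strict, certifying a nonempty-interior region $\cW_\ell$ with $d_r(\cW_k,\cW_\ell)+d_r(\cW_\ell,\cW_j)=N$. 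The one point you should make explicit is that your genericity step is not free: the coincidence $t_{i_1}=t_{i_2}$ is the vanishing of the determinant $(\sigma_{ki_1}h_{i_1}^Tr_k)(\sigma_{ki_2}h_{i_2}^Tr_j)-(\sigma_{ki_2}h_{i_2}^Tr_k)(\sigma_{ki_1}h_{i_1}^Tr_j)$, which is a nontrivial polynomial in $(r_k,r_j)$ --- hence avoidable by an arbitrarily small perturbation inside the open set $\cW_k^\circ\times\cW_j^\circ$ --- precisely because distinct rows of $\tilde H$ are linearly independent; for proportional rows the crossing times would coincide identically and no perturbation could separate them. So your argument consumes the ``no linearly dependent row pairs'' hypothesis at exactly the point where the paper's Farkas argument does, and this should be stated rather than left implicit. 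In exchange, your segment argument yields more than the lemma asserts: with distinct crossing times it produces in one stroke an entire chain of pairwise-adjacent regions from $\cW_k$ to $\cW_j$, which is exactly the sequence the continuity argument of Theorem \ref{th.checkcont} needs and which the paper otherwise extracts by iterating the lemma; the paper's duality proof, for its part, remains purely algebraic and requires no genericity or perturbation discussion.
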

\begin{proof} We use mathematical induction. Assume that $\tilde N=2$. We can represent $\cW_j, \cW_k$, w.l.o.g, by the matrices $H_k=[(\hat \Sigma_k \hat H)^T \, h_{t_1} \, h_{t_2} ]^T, H_j=[ (\hat \Sigma_k \hat H)^T \, -\!h_{t_1} \, -\!h_{t_2} ]^T$. For the sake of contradiction, assume there does not exist $\cW_\ell$ satisfying the claim. This implies that the regions represented by the matrices $[(\hat \Sigma_k \hat H)^T \, h_{t_1} \, -\!h_{t_2} ]^T, [ (\hat \Sigma_k \hat H)^T \, -\!h_{t_1} \, h_{t_2} ]^T$ have empty interiors. 
By Farkas Lemma, there exists $\xi_1,\xi_2 \in  \bar{\mathbb R}_+^{\tilde p-2}, \xi_{1t_1}, \xi_{2t_2} \in \bar{\mathbb  R}_+ $ with: $ h_{t_1} = \xi_1 \hat H + \xi_{1t_1} h_{t_2} = -(\xi_2 \hat H -\xi_{1t_2} h_{t_2}). $ 
Hence, $(\xi_1+\xi_2) \hat H = ( \xi_{1t_1} - \xi_{1t_2}) h_2 $.  If $ \xi_{1t_1} \ne \xi_{1t_2}$, then either $[\hat H \, -\!h_{t_2}]$ or $[\hat H \, h_{t_2}]$ correspond to a region with empty interior, which is not. Thus, $ \xi_{1t_1} = \xi_{1t_2}$, which implies $(\xi_1+\xi_2) \hat H = 0 $. Since $\cW_j^\circ, \cW_k^\circ \ne \varnothing$, then $\xi_1=\xi_2=0$. Thus, we get $ h_{t_1} = \xi_{1t_1} h_{t_2}$, which contradicts our assumption that $\tilde H$ does not have linearly dependent row pairs. Therefore, the statement is true for $N=2$.\\
Assume now that the statement is true for $\tilde N=N-1$, and let $\tilde N=  N$. We can represent $\cW_j, \cW_k$, w.l.o.g, by the matrices $H_k=[(\hat \Sigma_k \hat H)^T \, h_{t_1} \, ..\, h_{t_N} ]^T, H_j=[ (\hat \Sigma_k \hat H)^T \, ..\, -\!h_{t_1} \, -\!h_{t_N} ]^T$. Let $\cW_k^-, \cW_j^-$ be the regions corresponding to the matrices $H_k^-=[(\hat \Sigma_k \hat H)^T \, h_{t_1} \, ..\, h_{t_{N-1}} ]^T, H_j^-=[ (\hat \Sigma_k \hat H)^T \, ..\, -\!h_{t_1} \, -\!h_{t_{N-1}} ]^T$. By the induction hypothesis, $\exists \cW_\ell^-$ satisfying the claim. Moreover, either $ \cW_\ell^- \cap \{r|h_N r\ge 0 \}$,$\cW_\ell^- \cap \{r|h_N r \le 0 \}$ or both have a nonempty interior. Let $\cW_\ell$ be a nonempty one. Thus, $\cW_\ell$ satisfies the claim. \end{proof}

We are ready to prove continuity now. We can write $\partial \cW_k \cap \partial \cW_\ell = \{r| [h_{t_1}^T \, .. \, h_{t_N}^T] r\ge 0\}$. By Lemma \ref{lem.cont_induction}, it can been seen that for any $\cW_{k_1}, \cW_{k_N}$ with $d(\cW_{k_1},\cW_{k_N})=N$ there exists a sequence of regions $\cW_{k_2}, .. , \cW_{k_{N-1}}$ so that $d_r(\cW_{k_{\ell}}, \cW_{k_{\ell+1}} ) =1 $ and $s_{k_\ell}(k_{\ell+1})=h_{t_\ell}$. Hence:\vspace{0em}
\[ c_{k_N}-c_{k_1}= \sum_{\ell=1}^N (c_{k_{\ell+1}}-c_{k_\ell}) = \sum_{\ell=1}^N \eta_{k_{\ell+1} k_{\ell}} h_{t_\ell},\vspace{0em} \]
which implies $(c_{k_N}-c_{k_1})^Tr = 0 $ when $r \in \partial \cW_{k_1} \cap \mathcal \partial W_{k_N}$.\\
\emph{C4: Nonincreasingness:}
When $R(x) \in \cW_k$, we can write:\vspace{0em}
\begin{align}\label{e.Vdot_exp}
 \textstyle\dot V(x) &= c_k^T \dot R(x) = c_k^T \frac{\partial{R}}{\partial x}(x) \Gamma R(x)   \\ \nonumber & 
 = \sum_{i \in I_k}  \sum_{j \in J_{ki}} c_{kj}  \frac{\partial R_j}{\partial x_i} \dot x_i \le 0.\vspace{0em}
\end{align}
We claim that this is equivalent to the statement ``$c_{kj} \dot x_i \le 0$ whenever $R(x) \in \cW_k$, for all $j \in J_{ki}, i \in I_k, k=1,..,m/2$''. Since the sufficiency is clear, we just show necessity:  assume that there exists $j^* \in J_{ki}, i^* \in I_{k}, 1 \le k^* \le m/2$ so that $c_{k^*j^*} \dot x_i^* \ge 0$. Then, we can choose $({\partial R_{j^*}}/{\partial x_{i^*}})(x)$ large enough so that the corresponding system in the network family $\mathscr N_\Gamma$ will have $\dot V(x) \ge 0$.

Now we show equivalence with conditions a)-b). Considering the statement above and since $R$ is monotone, this entails that  $\sgn(c_{kj_1})\sgn(c_{kj_2})\ge 0$ for every $j_1,j_2 \in J_{ki}$, which shows condition a). Thus, we define $\nu_{ki}=\sgn(c_{kj^*}), j^* \in J_{ki}$. To show b), By Farkas Lemma, the condition is equivalent to the existence of $\lambda^{(ki)} \in \bar{\mathbb R}_+^n$, $i \in I_k, k=1,..,m/2$ so that \eqref{e.decreasing} holds. \\
 It remains to be shown that we can choose the coefficients so that $\supp(\lambda^{(ki)}) \subset s_k(\mathcal N_k)$. This follows directly from the following lemma:
\begin{lemma}\label{lem.basis}
 Let $H$, and $\{\cW\}_{k=1}^m$ be as above, then the rows of $H_k=[\sigma_{ks_{k\ell_1}} h_{s_{k\ell_1}} \ .. \ \sigma_{ks_{k\ell_{o_k}}} h_{s_{k\ell_{o_k}}} ]^T$ form a conic basis of $\mathcal W_k$, where $o_k=|\mathcal N_k|$
\end{lemma}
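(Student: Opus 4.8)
The plan is to prove a slightly stronger combinatorial-geometric statement from which the lemma follows: the inequality indices switched in passing to a neighbor, namely $s_k(\mathcal N_k)$, are \emph{exactly} the irredundant (facet-defining) inequalities of the full-dimensional cone $\cW_k$, and their normals are precisely the extreme rays of the dual cone $\cW_k^{*}=\mathrm{cone}\{\sigma_{kj}h_j\}$. Here I read ``conic basis of $\cW_k$'' as a minimal irredundant $H$-representation of $\cW_k$, equivalently a minimal set of generators of $\cW_k^{*}$. Without loss of generality I would assume $H$ has no two linearly dependent rows (this is the reduced matrix $\tilde H$ on which the neighbor relation is defined), so that distinct neighbors flip distinct inequalities and $s_k$ is injective; recall also that $\cW_k$ has nonempty interior, hence is $\nu$-dimensional, and that no row of $H$ is zero.

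First I would set up the bijection between neighbors and facets. Write $a_j=\sigma_{kj}h_j$, so $\cW_k=\{r:a_j^Tr\ge0,\ j=1,\dots,\tilde p\}$. If inequality $i$ is \emph{redundant}, then $\cW_k$ is unchanged upon deleting it, so the flipped region $\{a_j^Tr\ge0,\ j\ne i\}\cap\{a_i^Tr\le0\}$ is contained in $\cW_k\cap\{a_i^Tr\le0\}\subseteq\{a_i^Tr=0\}$, a hyperplane; hence it has empty interior and is not a neighbor. Conversely, if $i$ is \emph{irredundant}, deleting it strictly enlarges the cone, so there is $\bar r$ with $a_j^T\bar r\ge0$ for $j\ne i$ but $a_i^T\bar r<0$; taking an interior point $r^{\circ}$ (all $a_j^Tr^{\circ}>0$) and moving along the segment $(1-t)r^{\circ}+t\bar r$ produces, at the crossing $a_i^Tr^{*}=0$, a point with $a_j^Tr^{*}>0$ for every $j\ne i$, i.e. a relative-interior point of the facet $F_i$. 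A small perturbation of $r^{*}$ decreasing $a_i^Tr$ then lies in the interior of the flipped region, which is therefore a genuine neighbor at Hamming distance one with $s_k(\cdot)=i$. This shows $s_k(\mathcal N_k)$ is exactly the set of irredundant indices, and since $s_k$ is injective, $H_k$ has exactly $o_k=|\mathcal N_k|$ distinct rows.

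Next I would upgrade this to the conic-basis statement. Because redundant inequalities can be dropped without changing the cone, $\cW_k=\{r:H_kr\ge0\}$, so the rows of $H_k$ already form a complete $H$-representation. For minimality I would show each irredundant normal $a_i$ is an extreme ray of $\cW_k^{*}$: if $a_i=\sum_{j\ne i}\mu_j a_j$ with $\mu_j\ge0$, then evaluating at the relative-interior facet point $r^{*}$ gives $0=a_i^Tr^{*}=\sum_{j\ne i}\mu_j a_j^Tr^{*}$ with every summand nonnegative, forcing $\mu_j=0$ (as $a_j^Tr^{*}>0$ for $j\ne i$) and hence $a_i=0$, contradicting the no-zero-row assumption. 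Thus no irredundant normal is a nonnegative combination of the others. Since $\cW_k$ is full-dimensional, $\cW_k^{*}$ is pointed and so has a unique minimal generating set given by its extreme rays; the argument above identifies these with the rows of $H_k$, so the rows of $H_k$ form a conic basis of $\cW_k$ as claimed. In particular every element of $\cW_k^{*}$ — such as $-\nu_{ki}\gamma_i$, once C4 guarantees it lies there — is a nonnegative combination of the rows of $H_k$, which is exactly the support condition $\supp(\lambda^{(ki)})\subset s_k(\mathcal N_k)$ invoked in Theorem \ref{th.checkcont}.

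The main obstacle I anticipate is the irredundant $\Rightarrow$ neighbor direction together with the extremality argument: both hinge on exhibiting a point in the relative interior of the facet $F_i$ satisfying all remaining inequalities strictly, and on cleanly handling the fact that $\cW_k$ is \emph{not} pointed, since it contains $\ker H\ne\{0\}$. This non-pointedness is what could make the phrase ``conic basis'' ambiguous, but it causes no difficulty here: the relevant generation takes place in the dual cone $\cW_k^{*}$, which is pointed precisely because $\cW_k$ is full-dimensional, so the count of irredundant inequalities and their extremality are unaffected by the lineality of $\cW_k$ itself.
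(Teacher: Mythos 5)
Your proof is correct, but it takes a genuinely different route from the paper's. The paper works entirely on the dual side: it asserts (essentially as a consequence of the definition of neighbor) that the rows of $H_k$ are conically independent, and then proves completeness by a pairwise elimination argument --- it takes two conically dependent rows $h_{t_1},h_{t_2}$ of $\Sigma_k H$, uses the Farkas lemma to express each in terms of the other and the remaining rows, substitutes, and rules out the degenerate sign cases using the nonempty-interior assumption and the existence of $v\gg 0$ in $\ker H$, concluding that one of the two rows lies in the conic span of the rest; iterating eliminates all conically dependent rows. You instead work on the primal side: you establish the bijection between neighbors and irredundant (facet-defining) inequalities by explicitly constructing relative-interior facet points via the segment/perturbation argument, deduce conic independence by evaluating a putative conic combination at such a point, and obtain completeness from the standard fact that a full-dimensional polyhedral cone is cut out by its facet-defining inequalities, equivalently that its (pointed) dual cone has a unique minimal generating set of extreme rays. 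Your route makes explicit two things the paper glosses over: why the neighbor-switched rows are precisely the conically independent ones, and why the non-pointedness of $\cW_k$ (it contains $\ker H$) is harmless; it also connects the lemma cleanly to the support condition $\supp(\lambda^{(ki)})\subset s_k(\mathcal N_k)$ used in C4. The paper's route, in exchange, is self-contained Farkas algebra and never invokes the facet-description theorem; note that your step ``redundant inequalities can be dropped simultaneously without changing the cone'' \emph{is} that theorem (it fails without full-dimensionality and the no-linearly-dependent-rows reduction, e.g.\ for duplicated rows), so it is the one compressed point in your argument, though your facet correspondence does supply the needed justification implicitly.
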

\begin{proof}
 By definition, each row vector of $H_k$ is conically independent of the rows of $\Sigma_k H$. It remains to be shown the rows of $\Sigma_k H$ belong to the conic span of the rows of $H_k$. Assume, w.l.o.g, that $\Sigma_k H=[\hat H^T \, h_{t_1} \, h_{t_2} ]^T$, where $h_{t_1},h_{t_2}$ are not conically independent, and we need to show that they belong to the conic span the rows of $\hat H$. Then by Farkas Lemma $\exists \xi_1,\xi_2 \in  \bar{\mathbb R}_+^{  p-2}, \xi_{1t_1}, \xi_{2t_2} \in \bar{\mathbb  R}_+ $ with: $ h_{t_1} = \xi_1 \hat H + \xi_{1t_1} h_{t_2}, h_2= \xi_2 \hat H +\xi_{2t_2} h_{t_2}$. By substitution, we get $(1-\xi_{1t_1}\xi_{1t_1}) h_2 = (\xi_2+\xi_{2t_2})\hat H$. If $1-\xi_{1t_1}\xi_{1t_1}<0$, then this contradicts that $\cW_k$ has nonempty interior. If $1-\xi_{1t_1}\xi_{1t_1}=0$, then this contradicts $v \in \ker \hat H$ for $v\gg 0$. Therefore, the only possibility is $1-\xi_{1t_1}\xi_{1t_1}>0$, which proves the claim for two vectors. In general, this procedure can be applied to eliminate all conically dependent rows. 
\end{proof}


\paragraph*{Proof of Theorem \ref{th.checkcvx}} The converse direction of C2$'$ can be shown directly since if $c_k^T R=0$, convexity implies that $c_k^TR=0, k=1,..,m$. Hence $\ker \Gamma \subset \ker C$. C4$'$ can be shown via a similar argument to the previous proof.

\paragraph*{Proof of Theorem \ref{cont.vs.cvx}} Let $V(x)=\tV(R(x))$ be a PWLR function, and denote its polyhedral level set by $G=\{r \in \mathbb R^\nu | \tV(r) \le 1 \}$. By homogeneity and nonnegativity of $\tV$ we can write\vspace{0em}
\begin{equation}\label{e.minkowski}
\tV(r) = \inf_{r \in c G } c. \vspace{0em}\end{equation}
 Note that $\tV$ will be a Minkowski functional if $G$ was convex. As the level set $G$ characterizes $\tV$ fully, we want to express C4 for the set $G$. To that end, we use the notion of a \emph{tangent cone}, which we define as follows for a polyhedral set $G$ induced by a PWL function $\tV$ at point $r$: $T_r G := \bigcap_{k \in K_r} \{ z \in \mathbb R^\nu| c_k^T z \le 0 \} $, where $K_r= \{ k \in \{1,..,m\} | r \in \cW_k \}$. In fact, our definition of $T_r G$ coincides with \emph{Clarke's Tangent Cone} \cite{clarke97}. We state now the following Lemma:
\begin{lemma}\label{lem.tan} Given a polyhedral set $G \subset \mathbb R^\nu$. 
 The induced PWLR function $V=\tV(R(x))$ with $\tV$ as in \eqref{e.minkowski} satisfies C4 if and only if $ \frac{\partial R}{\partial x}(x)\Gamma R(x) \in T_{R(x)} G $ for all $R\in \mathscr K_\Gamma$.
\end{lemma}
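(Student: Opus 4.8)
The plan is to reduce the asserted geometric condition to the algebraic description of the derivative $\dot V$ given in \eqref{e.Vdot1}, since, once unwound, both sides of the claimed equivalence are simply statements about the signs of the scalars $c_k^T \dot R(x)$ over a common active index set. Fix a kinetics $R \in \mathscr K_\Gamma$ and a point $x$, and write $r = R(x)$ and $\dot R = \frac{\partial R}{\partial x}(x)\Gamma R(x)$. The first step is to identify the index set appearing in \eqref{e.Vdot1} with the one defining $T_r G$. Because each cone $\cW_k$ is, by construction, precisely the region on which the linear piece $c_k$ realizes the value of $\tV$, we have $r \in \cW_k \iff c_k^T r = \tV(r) = V(x)$; hence the value-based set $K_x = \{k : c_k^T R(x) = V(x)\}$ coincides with the region-based set $K_r = \{k : r \in \cW_k\}$, where both are read with the symmetry convention $c_k = -c_{m+1-k}$. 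I would record this identification explicitly, including the case in which $r$ lies on a shared boundary $\partial\cW_k \cap \partial\cW_j$, so that several indices are simultaneously active; here the partition structure of Proposition \ref{prop1} (disjoint interiors, $\cW_k \cap \cW_j = \partial\cW_k \cap \partial\cW_j$) is what controls the overlaps.

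With this identification in hand, the two sides unwind symmetrically. Condition C4 is, by the very definition \eqref{e.Vdot1}, the statement $\dot V(x) = \max_{k \in K_x} c_k^T \dot R \le 0$, which holds iff $c_k^T \dot R \le 0$ for every $k \in K_x$; the legitimacy of this $\max$ as the one-sided rate of change was already secured in the proof of Theorem \ref{th.lyap} via the Taylor expansion of $R(x(t))$ and the Lipschitz, positively homogeneous structure of $\tV$, the scale-invariance of the cones $\cW_k$ keeping the active set constant along the relevant ray. On the other side, unwinding the definition gives $\dot R \in T_r G = \bigcap_{k \in K_r}\{z \in \mathbb R^\nu : c_k^T z \le 0\}$ precisely when $c_k^T \dot R \le 0$ for every $k \in K_r$. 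Since $K_x = K_r$ by the first step, the two pointwise conditions are literally the same. Both C4 and the right-hand side are quantified over all $x$ and all $R \in \mathscr K_\Gamma$, so this pointwise equivalence immediately upgrades to the stated equivalence.

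I expect the main obstacle to be the rigorous treatment of the first step: showing $K_x = K_r$ cleanly, in particular ruling out (or absorbing) the non-convex pathology in which a piece $c_k$ attains the value $\tV(r)$ at a point $r \notin \cW_k$, and handling points sitting on several region boundaries at once. A secondary, more conceptual point is confirming that the purely algebraic set $\bigcap_{k\in K_r}\{z : c_k^T z \le 0\}$ really is the Clarke tangent cone of the possibly non-convex polyhedral level set $G$; for the equivalence itself only the algebraic description is needed, but this geometric identification is what licenses the tangent-cone language exploited in the surrounding argument of Theorem \ref{cont.vs.cvx}. I would also verify at the outset that the gauge representation \eqref{e.minkowski} genuinely recovers $\tV$ — immediate from its positive homogeneity and nonnegativity — so that the sublevel set $G$ encodes $\tV$ faithfully and the passage between $\tV$ and $G$ is lossless.
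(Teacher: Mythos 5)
Your overall reduction---unwinding both sides of the equivalence into a family of sign conditions $c_k^T \dot R(x)\le 0$ over an active index set---is the same skeleton as the paper's proof, but the hinge of your argument contains a genuine gap. You claim $r\in\cW_k \iff c_k^T r=\tV(r)$, and hence $K_x=K_r$. Only the forward implication holds. For a \emph{non-convex} PWL function of the form \eqref{e.conLF}, a piece $c_k$ can attain the value $\tV(r)$ at points $r\notin\cW_k$ (its linear extension can re-cross the graph of $\tV$ in distant regions), so in general $K_r\subsetneq K_x$. This case cannot be ``ruled out or absorbed'': the lemma exists precisely to serve Theorem \ref{cont.vs.cvx}, whose hypothesis is a possibly non-convex PWLR Lyapunov function (for convex $\tV$ the identification does hold, but then the lemma would not be needed in this generality). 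Nor is the discrepancy harmless: requiring $c_k^T\dot R(x)\le 0$ for all $k\in K_x$ --- your value-based reading of C4 through \eqref{e.Vdot1} --- is \emph{strictly stronger} than C4 in general, because the certificate \eqref{e.decreasing} only controls signs on $\cW_k$ itself: it gives $\nu_{ki}\dot x_i=\nu_{ki}\gamma_i^T R(x)=-{\lambda^{(ki)}}^T\Sigma_k H R(x)\le 0$ only where $\Sigma_k H R(x)\ge 0$, i.e.\ only for $R(x)\in\cW_k$. So your first step (``C4 is, by the very definition \eqref{e.Vdot1}, $\max_{k\in K_x}c_k^T\dot R\le 0$'') fails in one direction, and your second step ($K_x=K_r$) fails in the other; the biconditional chain does not close.

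The repair --- which is exactly the paper's proof --- is to never touch the value-based set $K_x$. In the proof of Theorem \ref{th.checkcont}, C4 is shown, via the expansion \eqref{e.Vdot_exp} and the ``choose the Jacobian entry large'' argument quantified over all $R\in\mathscr K_\Gamma$, to be equivalent to the \emph{region-indexed} statement: for every $k$, every $x$ with $R(x)\in\cW_k$, and every admissible kinetics, $c_k^T \frac{\partial R}{\partial x}(x)\Gamma R(x)\le 0$. Since the paper defines $T_r G:=\bigcap_{k\in K_r}\{z\in\mathbb R^\nu : c_k^T z\le 0\}$ with the region-based active set $K_r=\{k : r\in\cW_k\}$, the membership $\frac{\partial R}{\partial x}(x)\Gamma R(x)\in T_{R(x)}G$ for all $x$ and all $R\in\mathscr K_\Gamma$ is verbatim that same statement, and the lemma follows with no further argument. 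Your closing side remark is correct on one point: identifying this polyhedral cone with Clarke's tangent cone is a separate geometric fact that the lemma itself does not require; it is invoked only afterwards, in Lemma \ref{lem.cone_addition} inside the proof of Theorem \ref{cont.vs.cvx}.
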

\begin{proof} Note that the condition $ \frac{\partial R}{\partial x}(x)\Gamma R(x) \in T_{R(x)} G $ is equivalent to the requirement that $c_k^T  \frac{\partial R}{\partial x}(x)\Gamma R(x) \le 0$ for all $k \in K_{R(x)}$, and all Jacobian matrices corresponding to reaction rates in $\mathscr K_\Gamma$ . This is equivalent to C4 as can be noted in the proof of Theorem \ref{th.checkcont} and \eqref{e.Vdot_exp}.
\end{proof}

Consider a possibly nonconvex PWLR function $V$, and let $G$ be defined as above. We need the following lemma to proceed:
\begin{lemma}\label{lem.cone_addition} Let $r_1, r_2 \in G, \alpha \in [0,1]$. Denote $r=\alpha r_1 + (1-\alpha )r_2 \in \co(G)$. Then, $T_{r_1} G \oplus T_{r_2} G \subset T_{r} \co(G)$, where $\oplus$ denotes conic addition of sets.
\end{lemma}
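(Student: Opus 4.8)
The plan is to prove the inclusion by a \emph{feasible‑direction} argument: I will show that any direction of the form $z_1+z_2$ with $z_i\in T_{r_i}G$ keeps the midpoint $r$ inside the convex set $\co(G)$ to first order, which is precisely the statement that $z_1+z_2\in T_r\co(G)$. Throughout I will use the explicit description $T_{r_i}G=\bigcap_{k\in K_{r_i}}\{z: c_k^T z\le 0\}$ given just above the lemma, together with the fact that $\co(G)$ is convex (indeed polyhedral), so that its tangent cone contains every feasible direction.

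The technical core is the auxiliary claim that $z\in T_{r_i}G$ implies $r_i+tz\in G$ for all sufficiently small $t>0$. Since the cones $\cW_k$ are closed and finite in number, $r_i$ lies in the open complement of every $\cW_j$ with $j\notin K_{r_i}$; hence for $t$ small enough the point $r_i+tz$ meets only active cones, i.e. $r_i+tz\in\bigcup_{k\in K_{r_i}}\cW_k$. Choosing an index $k\in K_{r_i}$ with $r_i+tz\in\cW_k$, the definition of $\tV$ gives $\tV(r_i+tz)=c_k^T(r_i+tz)$ and $\tV(r_i)=c_k^T r_i$ (the latter because $r_i\in\cW_k$, continuity C3 making this value independent of the active index). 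Consequently
\[
\tV(r_i+tz)=\tV(r_i)+t\,c_k^T z\le \tV(r_i)\le 1,
\]
where $c_k^T z\le 0$ by $z\in T_{r_i}G$ and $\tV(r_i)\le 1$ by $r_i\in G$. Thus $r_i+tz\in G$, as claimed.

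Now fix $z_1\in T_{r_1}G$, $z_2\in T_{r_2}G$ and take $\alpha\in(0,1)$. The algebraic identity
\[
r+t(z_1+z_2)=\alpha\Bigl(r_1+\tfrac{t}{\alpha}z_1\Bigr)+(1-\alpha)\Bigl(r_2+\tfrac{t}{1-\alpha}z_2\Bigr)
\]
exhibits $r+t(z_1+z_2)$ as a convex combination of the two parenthesized points; by the auxiliary claim both of them lie in $G$ once $t$ is small enough (so that $\tfrac{t}{\alpha}$ and $\tfrac{t}{1-\alpha}$ are small), hence the combination lies in $\co(G)$. Therefore $z_1+z_2$ is a feasible direction of $\co(G)$ at $r$, so $z_1+z_2\in T_r\co(G)$; since $z_1,z_2$ were arbitrary this yields $T_{r_1}G\oplus T_{r_2}G\subset T_r\co(G)$. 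I would restrict attention to $\alpha\in(0,1)$, which is the range carrying the geometric content — at $\alpha\in\{0,1\}$ the displayed identity degenerates and the inclusion need not survive (e.g.\ for $G=[-1,1]$ with $r_1=1,r_2=-1,\alpha=1$ the left side is all of $\mathbb R$), so those values are not used.

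The step I expect to be the main obstacle is the auxiliary claim, and specifically the passage from the \emph{Clarke} half‑space description of $T_{r_i}G$ to the genuine membership $r_i+tz\in G$ rather than mere tangency. This is where the local conic geometry (small perturbations meet only active cones) must be combined with continuity of $\tV$ across cone boundaries (condition C3), so that a single choice of active index $c_k$ simultaneously represents $\tV(r_i)$ and $\tV(r_i+tz)$ and certifies the sublevel inequality.
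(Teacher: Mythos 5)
Your proof is correct (and the feasibility claim at its core is sound), but it takes a genuinely different route from the paper. The paper argues sequentially with nonsmooth-analysis machinery: it takes $h_n \searrow 0$, invokes the sequential characterization of Clarke's tangent cone to produce $z_{1n}\to z_1$, $z_{2n}\to z_2$ with $r_i + h_n z_{in}\in G$, forms the convex combination to get $r+h_n z_n \in \co(G)$, concludes membership in the Bouligand (contingent) cone of $\co(G)$, and then uses the fact that Bouligand and Clarke cones coincide for convex sets; the conic (rather than convex) combination is handled by "appropriate scaling," i.e.\ writing $z_1+z_2=\alpha(z_1/\alpha)+(1-\alpha)(z_2/(1-\alpha))$ and using that tangent cones are cones. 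You instead exploit the polyhedral structure directly: using the half-space description $T_{r_i}G=\bigcap_{k\in K_{r_i}}\{z: c_k^T z\le 0\}$, the finiteness and closedness of the cones $\cW_k$, and continuity (C3), you show that Clarke tangent vectors are genuine \emph{feasible directions} ($r_i+tz\in G$ for all small $t>0$), after which the same scaling identity plus convexity of $\co(G)$ finishes the job with no sequences and no Bouligand/Clarke comparison. Your approach is more elementary and self-contained but is tied to the PWL setting (in general Clarke tangent vectors are not feasible directions); the paper's argument is insensitive to polyhedrality and would survive for arbitrary closed $G$. One genuine catch on your part: the lemma as stated with $\alpha\in[0,1]$ is indeed false at the endpoints — your example $G=[-1,1]$, $r_1=1$, $r_2=-1$, $\alpha=1$ is valid, since then the conic sum is all of $\mathbb{R}$ while $T_{r}\co(G)=(-\infty,0]$ — and the paper's own scaling step also silently requires $\alpha\in(0,1)$. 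This does not damage the downstream application (Theorem \ref{cont.vs.cvx}): when $\alpha\in\{0,1\}$ only the inclusion $T_{R(x_1)}G\subset T_{R(x_1)}\co(G)$ is needed, which holds by monotonicity of the Bouligand cone and convexity of $\co(G)$.
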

\begin{proof} Let $z_1 \in T_{r_1} $, $z_2 \in T_{r_2}$, and $h_n \ssearrow 0$. By the definition of Clarke's tangent cone \cite{clarke97}, there exists $z_{1n} \to z_1, z_{2n}\to z_2$ such that $r_1+h_nz_{1n}, r_2+h_nz_{2n} \in G$. Let $z=\alpha z_1 + (1-\alpha)z_2$, and $z_n=\alpha z_{1n} + (1-\alpha) z_{2n} \to z $. Then, we have $r+h_n z_n \in \co(G)$. Let $r_n=r+h_nz_n \to r$. Thus, $ \frac 1{h_n}(r_n- r) \to z$. Hence, $z \in T_r^{(B)}\co(G)$, where $B$ denotes the Bouligand's tangent cone \cite{clarke97}. However, as the two cones are identical for convex sets, then $ z \in T_r \co(G)$.  The argument can be applied to any nonnegative combination with appropriate scaling of $h_{1n}, h_{2n}$.
\end{proof}

Now, let $R(x) \in \co(G)$. Hence, there exist $x_1,x_2$ and $\alpha\in[0,1]$ such that $R(x) = \alpha R(x_1) + (1-\alpha) R(x_2)$. Therefore, we can write:\vspace{0em}
\begin{equation}\label{e.hull} \textstyle \frac{\partial R}{\partial x} (x)\Gamma R(x) = \alpha \frac{\partial R}{\partial x} (x) \Gamma R(x_1) + (1-\alpha) \frac{\partial R}{\partial x} (x)  \Gamma R(x_2).\vspace{0em} \end{equation}
 By Lemma \ref{lem.tan}, $\frac{\partial R}{\partial x} (x) \Gamma R(x_1) \in T_{R(x_1)} G$, $\frac{\partial R}{\partial x} (x) \Gamma R(x_2) \in T_{R(x_2)}G$ for all $x$. Therefore, Lemma \ref{lem.cone_addition} implies that $\frac{\partial R}{\partial x} (x)\Gamma R(x) \in T_{R(x)} \co(G)$ for all $x$. Therefore, $ V(x) = \inf_{R(x) \in r \mbox{\footnotesize co}(G)} r$, is a convex PWLR function. \hspace{\fill} $\blacksquare$

 \paragraph*{Proof of Proposition \ref{th.lasalle}}
Let $\tilde{x}(t)$ denote any solution of \eqref{e.ode} which is contained in $\ker(\dot V(x))$. Consider first the case when $\tilde x(0) \in \mathbb R_n^+$. Let $\mathcal T_k=\{t>0 : c_k^TR(\tilde x(t))=V(\tilde x(t))\}$, $k=1,..,m$, then $\mathcal T_k$ are closed relative to some maximally defined interval where the solution $\tilde{x}$ exists, and $\bigcup_k \mathcal T_k=(0,\tau_{x_\circ})$. The existence of an open set $\mathcal T$ and $k^\star$
 such that
$V(\tilde x(t)) = c_{k^\star}^T R(\tilde x(t))$ for all $t \in \mathcal{T}$  follows by the Baire Category Theorem \cite{royden88} . By C4, $c_{k^\star}^T \dot R(\tilde x(t))=0$ identically for $t \in \mathcal{T}$ implies $\dot R_j(\tilde x (t) ) = 0$, for all $t \in \mathcal{T}$ and $j \in J_{k^\star}$. Then, by A4, we have  $\dot{ \tilde{x}}_i (t) =0$ identically for $t \in \mathcal{T}$ and all $i \in I_{k^\star}$. Using \eqref{e.decreasing}, $h_i^T R(\tilde x (t))=0$ for $t \in \mathcal T$ and all $i \in \supp(\lambda^{({k^\star}i)})$. By \eqref{e.continuety}, $c_j^T R(\tilde x(t))= c_{k^\star}^T R(\tilde x(t))$ and $\tilde{x}(t) \in \mathcal{W}_j$ for all $j \in L_{{k^\star}i}$ and all $t \in \mathcal{T}$. Hence, $c_j^T \dot R(\tilde x(t))=0, j \in L_{k^\star}$.  Iterating this procedure, we get $c_j^T \dot{R} ( \tilde x (t)) =0$ for all $j \in L_k^{( i^\star)}$ and accordingly, $\dot {\tilde x}_i(t)=0$ for all $i \in \bar I_{k^\star}=\{1,..,n\}$. Hence $\tilde{x}(t)$ is a constant solution and belongs
to the set of equilibria.
Additional comment is needed for C5$'$i: If $c_k \in \im(\Gamma_{\bar I_k}^T)$, then $c_k^T R(\tilde x(t)) = 0$. By convexity, this implies that $V(\tilde x(t))=0$, and hence $\tilde{x}(t) \in E$.

Assume now that $\tilde x(0)$ belongs to non-invariant face of $\mathscr C_{x_\circ}$, then $\tilde x(t) \in \mathbb R_+^n$ for $t>0$ and hence the argument of the previous case still applies. Finally, if $\tilde x(0)$ belongs to a closed invariant face $\Psi_P$ we may regard the solution $\tilde{x} (t)$ as a solution of the subnetwork obtained by deleting all species that are zeroed in $\Psi_P$ and removing all their associated output reactions.

Then, with a recursive argument, three cases arise, either $\tilde{x}(0)$ belongs to the interior of the stoichiometry class associated to the subnetwork, or it belongs to one of its non-invariant face or it belongs to an invariant face.
Since we assumed that C5i applies to each critical subnetwork and in turn subnetworks of subnetworks are regarded as subnetworks themselves, we can continue this recursive procedure to show that  $\tilde{x}(t) \in E_{x_\circ}$ for any initial condition $\tilde x(0) \in  \mathscr C_{x_\circ}$. \strut \hfill $\blacksquare$

\paragraph*{Proof of Theorem \ref{th.p0}} Consider the ODE \eqref{e.ode}. Assume that there exists a PWLR Lyapunov function. As explained in \S \ref{sec.LPalg}, the PWL function $\tV$ can be considered to be defined over a partition generated by a matrix of the form $\hat H=[\Gamma^T \, H^T]^T$, with $\{\cW_k\}_{k=1}^m$. \\ Fix $k \in \{1,..,m\}$, and let $c_k$ be the corresponding coefficient vector. By C1, we have $c_k^T = \xi_k^T \Sigma_k \hat H=\xi^T \diag(\Sigma_k^{(s)} ,\Sigma_k^{(h)})   [\Gamma^T \, H^T]^T  $ with $\xi_k >0$. By \eqref{e.Vdot_exp}, the following hold over $\cW_k$:\vspace{0em}
\begin{equation*}
  \xi_k^T \Sigma_k [\Gamma^T \, H^T]^T \frac{\partial R}{\partial x}(x) \dot x  \le 0.\vspace{0em}
\end{equation*}
However,  $R(x) \in \cW_k$ implies that $\dot x \in \mathcal S_{q_k} $, i.e. the sign pattern of $\dot x$ is identical to $\Sigma_k^{(s)}$. Furthermore, as noted in the proof of condition C4 of Theorem \ref{th.checkcont}, every term in the expansion of left-side of the above inequality is nonpositive. Hence, the following holds over the region $\cW_k$ for all $x$: \vspace{0em}
\begin{align}\label{e.p0_proof}
  &\xi_k^T \Sigma_k \begin{bmatrix} \Gamma \\ H \end{bmatrix} \frac{\partial R}{\partial x}(x) \Sigma_k^{(s)} \\ \nonumber &
  =   \xi_k^T \begin{bmatrix} \Sigma_k^{(s)} & 0 \\ 0 & \Sigma_k^{(h)} \end{bmatrix} \overbrace{\begin{bmatrix} \Gamma    \frac{\partial R}{\partial x}(x) & 0\\ H   \frac{\partial R}{\partial x}(x) & 0 \end{bmatrix}}^{J}  \begin{bmatrix} \Sigma_k^{(s)} & 0 \\ 0 & \Sigma_k^{(h)} \end{bmatrix} \le 0. \vspace{0em}
\end{align}
Now, consider the case $k \in \{m+1,..,2^p\}$. By definition, $\cW_k^\circ=\varnothing$. By Farkas Lemma, there exist $t \in \{1,..,p\}, \xi_k>0$ with $\xi_{kt}=1$ such that $\sigma_{kt} h_t=-\sum_{i \ne t } \xi_{ki} \sigma_{ki} h_i $. Therefore, $\xi_k^T \Sigma_k[ \Gamma^T H^T ] ^T = 0$. Hence, the inequality \eqref{e.p0_proof} is satisfied with equality.
 Therefore, we have shown that for all signature matrices $\{\Sigma_k\}_{k=1}^{2^p}$, there exists $\xi_k >0$ such that $-\xi_k^T \Sigma_k J \Sigma_k \ge 0$. Indeed, this is a characterization of $P_0$ matrices \cite[p. 149]{berman94}. Hence, $-J$ is a $P_0$ matrix for all $x$. In particular, this implies that $-\Gamma \frac{\partial R}{\partial x}(x)$ is $P_0$ for all $x$. \hfill $\blacksquare$

\paragraph*{Proof of Theorem \ref{th.necessary}}
Assume that there exists a pair $C \in \mathbb R^{m_h/2 \times \nu},H \in \mathbb R^{p \times \nu}$ such that $\tilde V$ \eqref{e.conLF} is PWLR Lyapunov function, and let $\{B_k\}_{k=1}^{m}$ be as defined in Theorem.  Since $\{\mathcal S_k\}_{k=1}^m$ is a partition, then $\forall k \in \{1,..,m\}, \exists \ell \in \{1,..,m_h\}$ such that $\cW_\ell^{\circ} \cap \mathcal S_k^\circ \ne \varnothing$. As in the proof of Theorem \ref{th.checkcont}, C4 is equivalent to requiring $c_{\ell j} \dot x_i \le 0$ whenever $R(x) \in \cW_\ell$, for all $j \in J_{\ell i}, i \in I_\ell, \ell=1,..,m_h/2$. Hence,
$ c_{\ell j} = | c_{\ell j} | b_{k j}, j \in \{1,..,\nu\}\ \mathcal I. $
Therefore, $\exists \zeta_k \in \mathbb R^\nu$ such that $c_k=\zeta_k^T B_k$, with $\zeta_k \ne 0, \zeta_{kj}\ge 0, j \in \{1,..,\nu\}\ \mathcal I$. Furthermore, since $\ker C=\ker \Gamma$, then $\zeta_k^T B_k V =0 $. \strut \hfill $\blacksquare$

\paragraph*{Proof of Corollary \ref{cor.criticalSiphon}:} Without loss of generality, let $\{1,...,t\}$ be the indices of the species in $P$. We claim that this implies that there exists a nonempty-interior sign region $\mathcal S_k, 1 \le k \le m_s$ with a signature matrix $\Sigma_k$ that satisfies $\sigma_{k1}=...=\sigma_{kt}=1$. To prove the claim, assume the contrary. This implies that $ \cap_{i=1}^{t} \{ R | \gamma_{i } ^ T R > 0 \} \, \bigcap \, \cap_{i=t+1}^{n} \{ R | \sigma_{i} \gamma_{i } ^ T R > 0 \} = \emptyset$ for all possible choices of signs $\sigma_i=\pm 1$. However, $\mathbb R^r$ can be partitioned into a union of all possible half-spaces of the form $\cap_{i=t+1}^{n} \{ R | \sigma_{i} \gamma_{i } ^ T R \ge 0 \}$. Therefore, this implies that $ \cap_{i=1}^{t} \{ R | \gamma_{i } ^ T R > 0 \} = \emptyset$. By Farkas Lemma, this implies that there exists $\lambda \in \mathbb R^t$ satisfying $\lambda > 0$ such that $ [\lambda^T 0] \Gamma =0$. Therefore, $P$ contains the support of the conservation law $[\lambda^T \,  0]^T$; a contradiction. \\
Now consider $\mathcal S_k$ with $\sigma_{k1}=...=\sigma_{kt}=1$. Since $\Lambda(P)={\rm \bf V}_R$, this implies $b_{kj}\le 0$ for all $j=1,..,\nu$. However, this is not allowable by Theorem \ref{th.necessary} since $ \zeta_k B_k v \le 0 $ for all $v \in \ker \Gamma \cap \Rnn$ and for any choice of admissible $\zeta_k$. \strut\hfill $\blacksquare$

\paragraph*{Proof of Theorem \ref{th.fixedH}}
Note that C1,C3 are represented by first and third constraints in the linear program. It remains to be shown that C4 is equivalent to the second constraint. In fact, using the same argument in the proof of Theorem \ref{th.necessary}, this is equivalent to the coefficients $c_k$ being compatible with the sign region $S_{q_k}$, which is equivalent to the second constraint. \strut \hfill $\blacksquare$

\paragraph*{Proof of Theorem \ref{th_alg}}
\begin{figure*}
\hrulefill
\begin{align}\label{wideeq}\nonumber\dot{\tilde R}_q(x) & = \frac 1{v_{q^\star}}\sum_{i: \alpha_{i q^\star } >0} \frac{\partial R_{q^\star}}{\partial x_i}  \left ( - \alpha_{ i q^\star  } (R_{q^\star}(x)-R_{-q^\star}(x)) + \sum_{\ell \ne q^\star} \beta_{ i\ell} (R_{\ell}(x) -R_{-\ell}(x) ) \right ) \\ & \quad - \frac 1{v_{q^\star}}\sum_{i: \beta_{ i q^\star } >0} \frac{\partial R_{-q^\star}}{\partial x_i}  \left ( - \beta_{i q^\star  } (R_{-q^\star}(x)-R_{q^\star}(x)) + \sum_{\ell\ne q^\star} \alpha_{ i\ell} (R_{-\ell}(x) -R_{\ell}(x)) \right ),\vspace{0em}\end{align}
\hrulefill
\end{figure*}

We need to show that $C$ satisfies C4$'$. Thus, it is sufficient to show that $\cW_1 (c_k) \subset \mathcal P(c_k)$, note that $R \in \cW_1(c_k)$ implies $( c_k + \nu_{ki}\gamma_i)^T R  \le c_k^T R $, for $i=1,...,n$. This in turn implies $\nu_{ki}\gamma_i^T R  \le 0, i=1,..,n$. Hence, $R \in \mathcal P(c_k)$.
 Therefore, by iterating this procedure through the rows of $C_m$ we get $ \cW_{k} (c_k) \subset  ... \subset   \cW_1 (c_k) \subset \mathcal P(c_k) $. If the algorithm terminates after finite number of iterations then a nonincreasing convex PWL function is constructed. Furthermore, to ensure that $\ker C \subset \ker \Gamma$ then we assume that $\ker \Gamma \subset \ker C_0$. \strut \hfill $\blacksquare$
\paragraph*{Proof of Theorem \ref{th.maxmin}}
  The PWLR function is convex by construction, where $C \in \mathbb R^{ \frac \nu 2(\nu-1) \times \nu}$. We can write $CR(x) = [c_{q+s}^T R(x)]_{s,q=1}^{q-1,\nu} = [\frac 1{v_q}R_q(x) - \frac 1{v_s}R_s(x)]_{s,q=1}^{q-1,\nu}$. Note that $\tV(R(x))= c_{q+s}^T R(x)$ is equivalent to  $\max_{1\le k \le \nu} \frac 1{v_k} R_k(x) = R_q(x) $ and $\min_{1\le k \le \nu} \frac 1{v_k} R_k(x) = R_s(x) $. \\
  Using the first assumption, we have $\ker \Gamma = \ker C$, and hence C2$'$ is satisfied. \\
We show C4$'$ using a directly. By assumption 2, we can perform the following computation:
\begin{align} \label{e.proof1}
\dot R_q (x) 
& = \frac 1{v_{q}}\frac{\partial R_{q} }{\partial x} \, \Gamma R(x) 
       \\ \nonumber & = \frac 1{v_{q}}\sum_{i: \alpha_{ i q} >0} \frac{\partial R_{q}}{\partial x_i}  \left ( - \alpha_{ iq} R_{q}(x) + \sum_{j\ne q} \beta_{i j} R_{j}(x) \right ) \end{align} 
       \begin{align*} \dot R_s (x) &= \frac 1{v_{s}}\frac{\partial R_{s} }{\partial x} \, \Gamma R(x)  
    \\ \nonumber &  =  \frac 1{v_{s}}\sum_{i: \alpha_{ is  } >0} \frac{\partial R_{s}}{\partial x_i}  \left ( - \alpha_{is } R_{s}(x) + \sum_{j \ne s} \beta_{i j} R_{j}(x) \right )
\end{align*}
Since $v \in \ker \Gamma$, then $-\alpha_{iq } = \sum_{j \ne q} \frac{v_{q}}{v_j} \beta_{ i j }$. Hence,
\begin{align*}& - \alpha_{i q  } R_{q}(x) + \sum_{j\ne q} \beta_{i j } R_{i}(x) \\ &\leq -\sum_{j \ne q} \frac{v_{q}}{v_j} \beta_{ j i} R_q(x) +  \sum_{j\ne q} \beta_{ j i } \frac{v_q}{v_{j }}R_{q}(x)= 0.\end{align*}
By a similar argument for the second term and by \eqref{e.rate_mono}, we get $\dot R_q(x) \le 0 $ and $\dot R_s(x) \ge 0$ and as a consequence $\dot V(x) = \max_{k \in K_x} c_k^T \dot R(x) \leq 0$. Hence, $V$ is a PWLR Lyapunov function.

We show the LaSalle's interior condition. Let $\tilde x (t) \subset \ker \dot V(x)$. Using the same argument in the proof of Theorem \ref{th.lasalle}, there exists $q^{\star}$ and $s^{\star}$ and an open subset of $\mathbb{R}$, $\mathcal{T}$, such that
$V(x)=\frac 1{v_{q^\star}} R_{q^\star} ( \tilde{x} (t) )- \frac 1{v_{s^\star}} R_{j^{\star}} ( \tilde{x} (t) )$ for all $t \in \mathcal{T}$. However, as both terms are nonincreasing, we have $\dot R_{q^\star}(\tilde{x}(t))=\dot R_{s^\star}( \tilde{x}(t) )=0$. By \eqref{e.proof1} and assumption 4 in \S2 we get $\dot{ \tilde{x}}_i (t) =0$ for all $i$ such that $  \alpha_{i q^{\star} } >0$, $\alpha_{i s^{\star}  } >0$ and all $t \in \mathcal{T}$. Therefore,
$  \alpha_{i q^\star  } R_{q^\star}(\tilde{x}) =  \sum_{j\ne q^\star} \beta_{i j } R_{j}(\tilde{x})$, and since $\frac 1{v_j} R_j(\tilde{x}(t)) \le \frac 1{v_{q^\star} } R_{q^\star}(\tilde{x}(t)) $, then $\frac 1{v_{q^\star}}R_j(\tilde{x} (t) )=R_{q^\star}(\tilde{x}(t))$ for all $j$ such that there is $i$ with $\beta_{i j}>0$, $\alpha_{ i q^\star }>0$ and all $t \in \mathcal{T}$.
A similar argument can be carried out for $R_s(\tilde{x}(t))$. By induction, it follows that $\frac 1{v_{q^\star}} R_j(\tilde{x}(t))=R_t(\tilde{x}(t))$ if $\R_j \in \mathscr A (\R_{q^\star})$ for $t \in \mathcal{T}$. Similarly, $\frac 1{v_{q^\star}} R_\ell(\tilde{x}(t))=R_s( \tilde{x}(t))$ if $\R_\ell \in \mathscr A (\R_{s^\star})$. Since $ \mathscr A (\R_{q^\star}) \cap  \mathscr A (\R_{s^\star}) \ne \phi $, we get $R_{q^\star}(\tilde{x}(t))= R_{s^\star}(\tilde{x}(t))$ for all $t \in \mathcal{T}$ and since $\mathcal{T}$ is an open set this implies that $\tilde{x}(t) \in \ker \Gamma$. \\
Assume the network is conservative. We claim that there are no critical siphons, and hence no critical subnetworks. For the sake of contradiction, assume that $P$ is a critical siphon. Hence, the associated face $\Psi_P$ is an invariant, compact and convex set. Applying the Brouwer fixed point theorem \cite{royden88} for the associated flow, there exists an equilibrium $x^* \in \Psi_P$ such that $\Gamma R(x^*)=0$. Since $\dim(\ker \Gamma)=1$, this implies that $R(x^*)=t v$ for some $t \ge 0$. Consider the case $t=0$. This implies $R(x^*)=0$. Then, $P \subset \tilde P:= \{1,..,n\}\backslash \supp(x^*)$\footnote{Equality is in the sense of the bijection between $\{1,..,n\}$ and $\mathbf V_S$.}. Observe that $\tilde P$ is a critical deadlock, however, this is not allowed by Corollary \ref{cor.criticalSiphon}. If $t>0$, this implies that $P=\emptyset$; a contradiction.

The persistence of the network follows directly from the absence of critical siphons by the results of \cite{Angeli07}. Since there exists a conservation law, the common ancestor condition is satisfied. Hence, the LaSalle's condition is satisfied. If there exists an isolated equilibrium, uniqueness and global stability follows for from Corollary \ref{cor1}. \hfill $\blacksquare$


\paragraph*{Proof of Theorem \ref{th2}:}
Without loss of generality, assume ${\bf V}_{R^{'}}={\bf V}_R$. As in the proof of Theorem \ref{th.maxmin}, we can write $CR(x) = [c_{q+s}^T R(x)]_{s,q=1}^{q-1,\nu} = [\frac 1{v_q}(R_q(x)-R_{-q}(x)) - \frac 1{v_s}(R_s(x) - R_{-s}(x) )]_{s,q=1}^{q-1,\nu}$. For simplicity, denote $\tilde R_{q}= R_q - R_{-q}$, $\tilde R_{s}= R_s - R_{-s}$. Hence, we can write \eqref{wideeq},
and an analogous expression can be written for $\dot{\tilde R}_s(x)$. Having a single negative coefficient in every bracket follows from the additional assumption in the statement of the theorem.
Therefore, using a similar argument to the one in the proof of Theorem \ref{th.maxmin} it can be seen that $\dot{\tilde R}_q(x) \le 0$, and $\dot{\tilde R}_q(x)  \ge 0 $. Hence, $\dot V(x(t)) \le 0$. \\
 A similar argument to the one in proof of Theorem \ref{th.maxmin} can be carried out to show the LaSalle's Interior condition. The absence of critical siphons follows from Theorem \ref{th.maxmin} and that critical siphons are not created by adding reverse reactions. \\ \strut\hfill $\blacksquare$ \vspace{-1em}

 \bibliographystyle{IEEEtran}

\begin{thebibliography}{10}
\providecommand{\url}[1]{#1}
\csname url@samestyle\endcsname
\providecommand{\newblock}{\relax}
\providecommand{\bibinfo}[2]{#2}
\providecommand{\BIBentrySTDinterwordspacing}{\spaceskip=0pt\relax}
\providecommand{\BIBentryALTinterwordstretchfactor}{4}
\providecommand{\BIBentryALTinterwordspacing}{\spaceskip=\fontdimen2\font plus
\BIBentryALTinterwordstretchfactor\fontdimen3\font minus
  \fontdimen4\font\relax}
\providecommand{\BIBforeignlanguage}[2]{{%
\expandafter\ifx\csname l@#1\endcsname\relax
\typeout{** WARNING: IEEEtran.bst: No hyphenation pattern has been}%
\typeout{** loaded for the language `#1'. Using the pattern for}%
\typeout{** the default language instead.}%
\else
\language=\csname l@#1\endcsname
\fi
#2}}
\providecommand{\BIBdecl}{\relax}
\BIBdecl

\bibitem{tolman38}
R.~C. Tolman, \emph{The principles of statistical mechanics}.\hskip 1em plus
  0.5em minus 0.4em\relax London: Oxford University Press, 1938.

\bibitem{horn72}
F.~Horn and R.~Jackson, ``General mass action kinetics,'' \emph{Archive for
  Rational Mechanics and Analysis}, vol.~47, no.~2, pp. 81--116, 1972.

\bibitem{clarke80}
B.~Clarke, ``Stability of complex reaction networks,'' in \emph{Advances in
  Chemical Physics, Volume 43}, I.~Prigogine and S.~Rice, Eds.\hskip 1em plus
  0.5em minus 0.4em\relax John Wiley \& Sons, 1980, pp. 1--215.

\bibitem{feinberg95}
M.~Feinberg, ``The existence and uniqueness of steady states for a class of
  chemical reaction networks,'' \emph{Archive for Rational Mechanics and
  Analysis}, vol. 132, no.~4, pp. 311--370, 1995.

\bibitem{bastin99}
G.~Bastin, ``Issues in modelling and control of mass balance systems,'' in
  \emph{Stability and stabilization of nonlinear systems}, D.~Aeyels,
  F.~Lamnabhi-Lagarrigue, and A.~J. van~der Schaft, Eds.\hskip 1em plus 0.5em
  minus 0.4em\relax Springer, 1999, pp. 53--74.

\bibitem{sontag01}
E.~Sontag, ``Structure and stability of certain chemical networks and
  applications to the kinetic proofreading model of {T}-cell receptor signal
  transduction,'' \emph{IEEE Transactions on Automatic Control}, vol.~46,
  no.~7, pp. 1028--1047, 2001.

\bibitem{haddad09}
V.~Chellaboina, S.~Bhat, W.~Haddad, and D.~Bernstein, ``Modeling and analysis
  of Mass-Action kinetics,'' \emph{IEEE Control Systems Magazine}, vol.~29,
  no.~4, pp. 60--78, 2009.

\bibitem{angeli09tut}
D.~Angeli, ``A tutorial on chemical reaction network dynamics,'' \emph{European
  journal of control}, vol.~15, no. 3-4, pp. 398--406, 2009.

\bibitem{angeli08}
D.~Angeli and E.~Sontag, ``Translation-invariant monotone systems, and a global
  convergence result for enzymatic futile cycles,'' \emph{Nonlinear Analysis:
  Real World Applications}, vol.~9, no.~1, pp. 128--140, 2008.

\bibitem{angeli10}
D.~Angeli, P.~De~Leenheer, and E.~Sontag, ``Graph-theoretic characterizations
  of monotonicity of chemical networks in reaction coordinates,'' \emph{Journal
  of mathematical biology}, vol.~61, no.~4, pp. 581--616, 2010.

\bibitem{maeda78}
H.~Maeda, S.~Kodama, and Y.~Ohta, ``Asymptotic behavior of nonlinear
  compartmental systems: nonoscillation and stability,'' \emph{IEEE
  Transactions on Circuits and Systems}, vol.~25, no.~6, pp. 372--378, 1978.

\bibitem{rosenbrock63}
H.~H. Rosenbrock, ``A {L}yapunov function with applications to some nonlinear
  physical systems,'' \emph{Automatica}, vol.~1, no.~1, pp. 31--53, 1963.

\bibitem{willems76}
J.~C. Willems, ``Lyapunov functions for diagonally dominant systems,''
  \emph{Automatica}, vol.~12, no.~5, pp. 519--523, 1976.

\bibitem{molchanov89}
A.~Molchanov and Y.~Pyatnitskiy, ``Criteria of asymptotic stability of
  differential and difference inclusions encountered in control theory,''
  \emph{Systems \& Control Letters}, vol.~13, no.~1, pp. 59--64, 1989.

\bibitem{moreau04}
L.~Moreau, ``Stability of continuous-time distributed consensus algorithms,''
  in \emph{43rd IEEE Conference on Decision and Control (CDC)}, vol.~4, 2004,
  pp. 3998--4003.

\bibitem{nonmonotone}
F.~Sauer and H.~J\"ackle, ``Concentration-dependent transcriptional activation
  or repression by kr\"uppel from a single binding site,'' \emph{Nature}, vol.
  353, no. 6344, pp. 563--566, 1991.

\bibitem{Angeli07}
D.~Angeli, P.~De~Leenheer, and E.~Sontag, ``A petri net approach to persistence
  analysis in chemical reaction networks,'' in \emph{Biology and Control
  Theory: Current Challenges}, I.~Queinnec, S.~Tarbouriech, G.~Garcia, and
  S.~Niculescu, Eds.\hskip 1em plus 0.5em minus 0.4em\relax Springer, 2007, pp.
  181--216.

\bibitem{bereanu65}
B.~Bereanu, ``A property of convex piecewise linear functions with applications
  to mathematical programming,'' \emph{Mathematical Methods of Operations
  Research}, vol.~9, no.~2, pp. 112--119, 1965.

\bibitem{yoshizawa}
T.~Yoshizawa, \emph{Stability theory by {L}iapunov's second method}.\hskip 1em
  plus 0.5em minus 0.4em\relax Tokyo: Mathematical Society of Japan, 1966.

\bibitem{blanchini95}
F.~Blanchini, ``Nonquadratic lyapunov functions for robust control,''
  \emph{Automatica}, vol.~31, no.~3, pp. 451--461, 1995.

\bibitem{nikaido65}
D.~Gale and H.~Nikaido, ``The Jacobian matrix and global univalence of
  mappings,'' \emph{Mathematische Annalen}, vol. 159, no.~2, pp. 81--93, 1965.

\bibitem{banaji09}
M.~Banaji and G.~Craciun, ``Graph-theoretic approaches to injectivity and
  multiple equilibria in systems of interacting elements,''
  \emph{Communications in Mathematical Sciences}, vol.~7, no.~4, pp. 867--900,
  2009.

\bibitem{berman94}
A.~Berman and R.~J. Plemmons, \emph{Nonnegative matrices in the mathematical
  sciences}.\hskip 1em plus 0.5em minus 0.4em\relax New York: Academic Press,
  1979.

\bibitem{polanski95}
A.~Polanski, ``On infinity norms as {L}yapunov functions for linear systems,''
  \emph{IEEE Transactions on Automatic Control}, vol.~40, no.~7, pp.
  1270--1274, 1995.

\bibitem{murray07}
R.~Olfati-Saber, J.~A. Fax, and R.~M. Murray, ``Consensus and cooperation in
  networked multi-agent systems,'' \emph{Proceedings of the IEEE}, vol.~95,
  no.~1, pp. 215--233, 2007.

\bibitem{schaft_arxiv}
A.~van~der Schaft, S.~Rao, and B.~Jayawardhana, ``On the mathematical structure
  of balanced chemical reaction networks governed by Mass-Action kinetics,''
  Oct 2011, arXiv preprint 1110.6078 [math.OC].

\bibitem{banaji07}
M.~Banaji, P.~Donnell, and S.~Baigent, ``P matrix properties, injectivity, and
  stability in chemical reaction systems,'' \emph{SIAM Journal on Applied
  Mathematics}, vol.~67, no.~6, pp. 1523--1547, 2007.

\bibitem{clarke97}
F.~H. Clarke, Y.~Ledyaev, R.~Stern, and P.~Wolenski, \emph{Nonsmooth analysis
  and control theory}.\hskip 1em plus 0.5em minus 0.4em\relax New York:
  Springer, 1997.

\bibitem{hahn67}
W.~Hahn, \emph{Stability of motion}.\hskip 1em plus 0.5em minus 0.4em\relax New
  York: Springer-Verlag, 1967.

\bibitem{rockafellar}
R.~Rockafellar, \emph{Convex analysis}.\hskip 1em plus 0.5em minus 0.4em\relax
  New Jersey: Princeton University Press, 1970.

\bibitem{royden88}
H.~Royden, \emph{Real Analysis}, 3rd~ed.\hskip 1em plus 0.5em minus 0.4em\relax
  New York: Macmillan Publishing Company, 1988.

\end{thebibliography}

\begin{IEEEbiography}[{\includegraphics[width=1in,height=1.25in,clip,keepaspectratio]{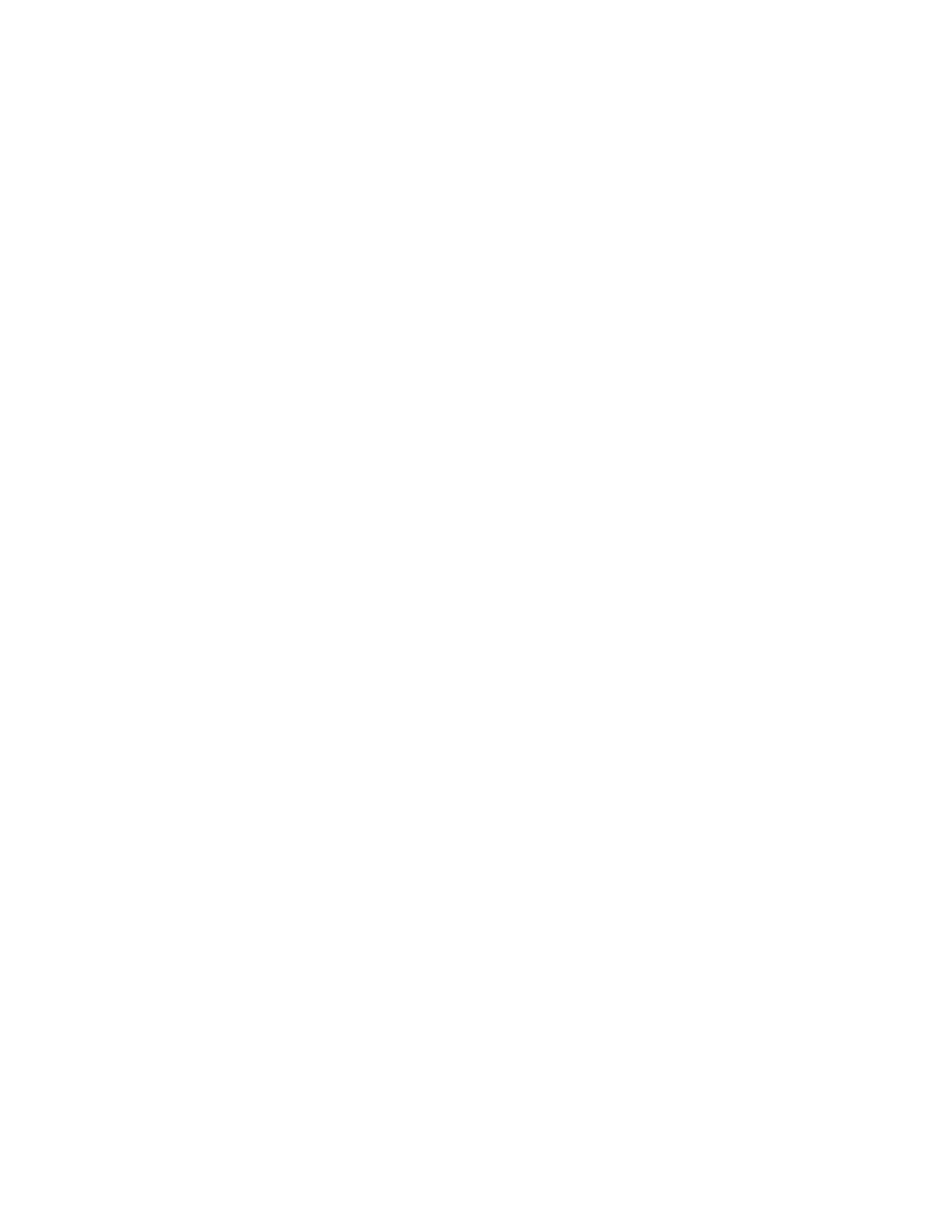}}]{Muhammad Ali Al-Radhawi} received his
B.Sc. and M.Sc. degrees in Electrical Engineering from University of Sharjah, UAE,
in 2008 and 2011, respectively. He is currently pursuing his Ph.D. degree at the Department
of Electrical \& Electronic Engineering, Imperial College London. His research interests include stability analysis and control synthesis for reaction networks and networked systems.

\end{IEEEbiography}
\begin{IEEEbiography}[{\includegraphics[width=1in,height=1.25in,clip,keepaspectratio]{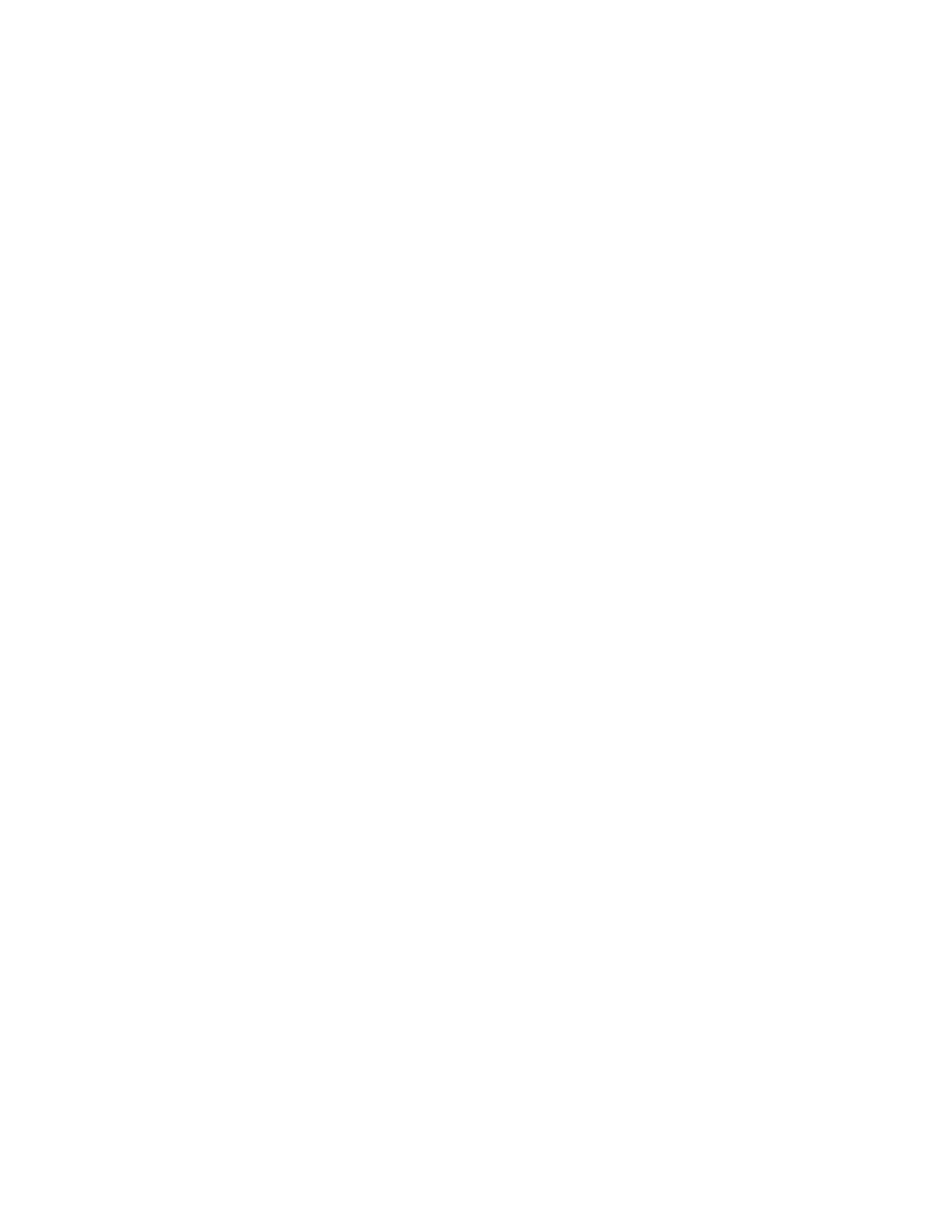}}]{David Angeli}
graduated in Control Engineering from
the University of Florence (1996) and obtained in 2000
a Ph.D. degree from the same university. In 2008 he
joined Imperial College London, where he is currently
a Reader in Nonlinear Systems. He is also a part-time
Associate Professor at the University of Florence, and a Fellow of the IEEE. His
research interests include: stability of nonlinear systems,
Model Predictive Control and Chemical Reaction Networks
Theory.

\end{IEEEbiography}
\end{document}